\newlength\figureheight
\newlength\figurewidth
\renewcommand{\algocf@captiontext}[2]{#1\algocf@typo. \AlCapFnt{}#2} 
\def\@algocf@capt@plain{top}
\renewcommand{\algocf@makecaption}[2]{%
  \addtolength{\hsize}{\algomargin}%
  \sbox\@tempboxa{\algocf@captiontext{#1}{#2}}%
  \ifdim\wd\@tempboxa >\hsize
   \hskip .5\algomargin%
    \parbox[t]{\hsize}{\algocf@captiontext{#1}{#2}}
 \else%
   \global\@minipagefalse%
   \hbox to\hsize{\box\@tempboxa}
 \fi%
  \addtolength{\hsize}{-\algomargin}%
}
\def\tr{\text{\rm tr}}
\newcommand{\ignore}[1]{}
\newcommand{\y}{{y}}
\newcommand{\z}{z}
\newcommand{\dt}{\mathrm{d}}
\newcommand{\plim}{\operatornamewithlimits{plim}} 
\newcommand{\E}{\mathbb{E}}
\def \d {\mathrm{d}}
\newtheorem{assumption}{Assumption}
\newtheorem{theorem}{Theorem}
\newtheorem{corollary}{Corollary}
\newtheorem{lemma}{Lemma}
\theoremstyle{remark}
\newtheorem{remark}{Remark}
\begin{document}
\def\spacingset#1{\renewcommand{\baselinestretch}%
	{#1}\small\normalsize} \spacingset{1}

\title{Synthetic likelihood in misspecified models}
\date{\empty}
\author[1]{David T. Frazier\thanks{Corresponding author:  david.frazier@monash.edu}}
\author[2,3]{David J. Nott}
\author[4]{Christopher Drovandi}
\affil[1]{Department of Econometrics and Business Statistics, Monash University, Clayton VIC 3800, Australia}
\affil[2]{Department of Statistics and Applied Probability, National University of Singapore, Singapore 117546}
\affil[3]{Operations Research and Analytics Cluster, National University of Singapore, Singapore 119077}
\affil[4]{School of Mathematical Sciences, Queensland University of Technology, Brisbane 4000 Australia}

\maketitle

\begin{abstract}
	Bayesian synthetic likelihood is a widely used approach for conducting Bayesian analysis in complex models where evaluation of the likelihood is infeasible but simulation from the assumed model is tractable. We analyze the behaviour of the Bayesian synthetic likelihood posterior when the assumed model differs from the actual data generating process. We demonstrate that the Bayesian synthetic likelihood posterior can display a wide range of non-standard behaviours depending on the level of model misspecification, including multimodality and asymptotic non-Gaussianity. Our  results suggest that likelihood tempering, a common approach for robust Bayesian inference, fails for synthetic likelihood whilst recently proposed robust synthetic likelihood approaches can ameliorate this behavior and deliver reliable posterior inference under model misspecification. All results are illustrated using a simple running example. 
\vspace{1cm}

\noindent \textbf{Keywords.}  {Synthetic likelihood}. Approximate Bayesian computation. Model misspecification. Likelihood tempering. 
\end{abstract}
\spacingset{1.9} 

\section{Introduction \label{sec:introduction}}
Approximate Bayesian methods, sometimes called likelihood-free methods, have become a common approach to conduct Bayesian inference in situations where the likelihood function is intractable. Two of the most prominent statistical methods in this paradigm are approximate Bayesian computation, see  \cite{marinea2012} for a review and \cite{sisson2018handbook} for a handbook treatment, and the method of synthetic likelihood (\citealp{wood2010statistical}). Synthetic likelihood-based inference is often conducted by placing a prior over the unknown model parameters and using Markov chain Monte Carlo methods to sample the resulting posterior. Throughout the remainder we refer to such methods as Bayesian synthetic likelihood, and refer to \cite{price2018bayesian} for an introduction. 

The goal of both approximate Bayesian computation (ABC) and Bayesian synthetic likelihood (BSL) is to conduct inference on the unknown model parameters by simulating summary statistics under the assumed model and matching them against  observed summaries calculated from the data. {\color{black}The simulated summary statistics are used to construct an estimate of the likelihood, which is then used to conduct posterior inference on the model unknowns.} While ABC implicitly constructs a nonparametric estimate of the likelihood for the summaries, synthetic likelihood uses a Gaussian approximation with an estimated mean and variance.

As these methods have grown in prominence, much research has been conducted to understand the benefits and disadvantages of different approximate Bayesian approaches. In terms of statistical regularity, i.e., large sample behavior, the method of approximate Bayesian computation behaves quite regularly in the context of correct model specification (see, e.g., \citealp[]{LF2016}, and \citealp{FMRR2016}), with \cite{frazier2019bayesian} demonstrating that BSL displays similar large sample behavior to ABC, while scaling to higher-dimensional summaries more easily than simple implementations of ABC. For a more in-depth comparison of ABC and BSL, we refer to Section 3.1.3 of \cite{martin2023approximating}.

{The goal of both methods is to conduct inference in models that are so complicated that the resulting likelihood is intractable. However, models are only ever an approximation of reality and, thus, correct specification is unlikely.  Hence, for a diverse collection of summary statistics, it is unlikely that the assumed model can exactly match all the summaries calculated from the observed data; with this problem likely exacerbated in early phases of model exploration, design, and formulation. When the summaries simulated under the assumed model cannot match the observed summaries for any value of the unknown model parameters, we say that the model is misspecified. This notion of misspecification is consistent with previous analyses of misspecification in ABC (see, e.g., \citealp{marin2014relevant}, and \citealp{frazier2020model}). }

Several authors have now discussed the impacts of model misspecification in likelihood-based Bayesian inference (see, e.g., \citealp{kleijn2012}, \citealp{miller2015}, \citealp{bhattacharya2019bayesian}), and it is known that ABC posteriors display non-standard asymptotic behaviors in misspecified models (\citealp{frazier2020model}). Given the links between ABC and BSL, it is critical for us to understand the behavior of the BSL posterior in misspecified models. However, we are unaware of any research that rigorously characterises the behavior of BSL under model misspecification. 

The need to theoretically examine the behavior of the BSL posterior in misspecified models is also motivated by the empirical analysis carried out in \cite{frazier2019robust}, where the authors present an empirical example showing that the BSL posterior displays non-standard behavior at a small sample size. Critically, \cite{frazier2019robust} did not explore whether this behavior abated as the sample size increased, or whether it was an artefact of their Monte Carlo approximation; nor did the authors explore the mechanism causing this non-standard posterior behavior, or present any theoretical results on the behavior of the BSL posterior \textit{in misspecified models}.

{In this manuscript we make four contributions to the literature on approximate Bayesian methods, and BSL methods more particularly. Our first contribution is to formally demonstrate that the BSL posterior is sensitive to model misspecification: depending on the nature of the model misspecification,  we prove that the BSL posterior can display non-standard asymptotic concentration or standard (i.e., Gaussian) concentration. These results deviate from those in correctly specified models, where \cite{frazier2019bayesian} demonstrate that the BSL posterior displays standard concentration.}\footnote{We later clarify that even the proof techniques used in \cite{frazier2019bayesian} are not applicable in \textit{misspecified models}, and that the posterior concentration results in \cite{frazier2019robust} do not extend to misspecified models.}

Our second contribution is to categorise the wide behaviours that the BSL posterior can present in misspecified models, which we illustrate empirically using a running example, while also verifying the technical conditions needed for our theoretical results in this example. As part of this analysis, we significantly extend the initial findings of \cite{frazier2019robust} by empirically demonstrating novel behaviours that the BSL posterior exhibits in misspecified models, including concentration onto a boundary point, multi-modality, and regions of posterior flatness. Critically, our theoretical analysis demonstrates that the non-standard behavior of the BSL posterior is not caused by Monte Carlo approximations, or a small sample issue, but is driven by the asymptotic behavior of the synthetic likelihood. 

The third contribution of this work is to highlight the differing behaviour of the BSL and ABC posteriors in misspecified models. In contrast to the case of correct model specification, the ABC and BSL posteriors concentrate onto different points in the parameter space when the model is misspecified. Our final contribution is an in-depth comparison of three possible approaches for dealing with model misspecification when conducting likelihood-free Bayesian inference. In this comparison, we theoretically demonstrate that a popular approach to robust Bayesian inference, likelihood tempering, does not ameliorate the non-standard behavior of the BSL posterior. However, we show empirically and theoretically that certain ``robust'' BSL approaches deliver reliable approximate Bayesian inference even when the model is misspecified. In particular, we propose a novel modification of the BSL adjustment procedure in \cite{frazier2019bayesian} that can adequately handle model misspecification, and we formally demonstrate that this procedure delivers valid uncertainty quantification in misspecified models.

{To motivate our analysis, we first illustrate the sensitivity of the BSL posterior to model misspecification in a simple example.}

\subsection*{Running example: moving average model of order one}\label{sec:maexamintro}
The researcher believes the observed data $y=(y_1,\dots,y_n)^\top$ is generated according to a moving average model of order one
\begin{equation}
y_{t}=e_{t}+\theta _{}e_{t-1} \quad( t=1,\dots,n), \label{MA2}
\end{equation}with $e_t$ independent and identically distributed standard normal, and our prior beliefs are uniform over $\theta\in[-1,1]$. We take as summary statistics the
sample autocovariances $\gamma _{j}(y_{1:n})=\frac{1}{n}\sum_{t=1+j}^{n}y_{t}y_{t-j}$, for $j\in\{0,1\}$, and let $S_n\left( y\right) =(\gamma_0(y_{1:n}), \gamma_1(y_{1:n}))^\top$ denote the observed summaries we will use to conduct inference on $\theta$. 

While the assumed model is \eqref{MA2}, the data actually evolves according to a stochastic volatility model: for $0<\rho<1$, $0<\sigma_{v}<1$, and $u_{t}$,  $v_{t}$ independent standard normal errors 
\begin{flalign}\label{trueDGP}
y_{t}=\exp(h_{t}/2)u_{t},\;\;h_{t}=\omega+\rho h_{t-1}+v_{t}\sigma_{v}\quad( t=1,\dots,n).
\end{flalign}Under the process in \eqref{trueDGP}, the assumed model in \eqref{MA2} is misspecified, however, for any value of $\omega,\rho,\sigma_v$ above, the population autocovariances are zero. Therefore, \textit{a priori} we expect the Bayesian synthetic likelihood posterior for $\theta$ to have significant mass near $\theta=0$, as this yields simulated data with no autocorrelation, and would most closely ``match'' the observed summaries. Throughout the remainder, unless otherwise stated, we use the term posterior to refer to the Bayesian synthetic likelihood posterior.

{\color{black}We generate data  from the model in \eqref{trueDGP} with parameter values $\omega = -0.10$, $\rho = 0.90$ and $\sigma_v = 0.40$}, which produces a series that displays many of the same features as monthly asset returns, and we consider three different sample sizes: $n=100,500,1000$. For each sample size and dataset, we plot the exact posterior in Figure \ref{fig:ma2_theta}; we refer to Supplementary Appendix E.1 
for further details regarding construction of the exact posterior in this example. 

{\color{black}At a small sample size ($n=100$), the posterior appears to be well-behaved with a single mode around $\theta=0$. However, as the sample size increases the posterior becomes bi-modal with well-separated modes of nearly equal height that both lie in the interior of the parameter space ($n=1000$).\footnote{We note that the behavior observed in Figure \ref{fig:ma2_theta} is in stark contrast to the findings in \cite{frazier2019robust}, where the authors found that, under a different parameterization of the same model, at small sample sizes ($n=100$) the BSL posterior placed most of its mass near the boundary of the parameter space. } 
The emergence of this bi-modality as the sample size increases signals the presence of a non-standard asymptotic phenomena, and suggests that the posterior \textit{will not concentrate onto a single point}. Moreover, in this example the normality of the summary statistics is reasonable: both summaries can be verified to satisfy a central limit theorem under the process in \eqref{trueDGP}. This behavior is surprising, and worrisome, given that the value of $\theta$ that (asymptotically) minimizes the Euclidean distance between the observed and simulated summaries is $\theta=0$. While $\theta=0$ ensures that the simulated summaries are as close as possible to the observed summaries, the posterior has little mass near this point at large samples sizes. Instead, at larger sample sizes the posterior gives the impression that we require meaningful autocorrelation to match the observed summaries, when in fact the observed data has no autocorrelation. 
}

In the remainder of the paper, we elaborate on the above behavior and characterize the asymptotic  behavior of the posterior in misspecified models. The remainder of the paper is organized as follows. In Section \ref{sec:two}, we discuss the relevant concept of model misspecification in synthetic likelihood. In Section \ref{sec:asymp}, we characterize the asymptotic behavior of the BSL posterior in misspecified models and demonstrate that the posterior may be asymptotically non-Gaussian. We also compare this theoretical behavior to what is obtained in the case of ABC. In Section \ref{sec:adjust}, we obtain new insights into approaches aimed at dealing with model misspecification. Section \ref{sec:discuss} concludes. Proofs of all results are contained in the Supplementary Appendix. 

\begin{figure}[h]
	\centering
	\setlength\figureheight{3.5cm} 
	\setlength\figurewidth{3.5cm} 
\include{fig1_MA1_R1} 
\vspace{-2cm}
	\caption{{Bayesian synthetic likelihood posterior for $\theta$ in the misspecified moving average model.}}
	\label{fig:ma2_theta}
\end{figure}

\section{Synthetic likelihood and model misspecification}\label{sec:two}

Let $\y=(y_1,\dots,y_n)^{\top}$ denote the observed data and define $P^{(n)}_{_\bullet}$ as the true distribution of $\y$. 
The observed data is assumed to be generated from a class of parametric models  $\{P^{(n)}_\theta:\theta\in\Theta\subseteq\mathbb{R}^{d_\theta}\}$ for which the likelihood function is intractable, but from which
we can easily simulate pseudo-data $\z$ for any $\theta\in\Theta$. Let $\Pi $ denote the prior measure for $\theta$ and $\pi({\theta })$ its density. 

Since the likelihood function is intractable, we conduct inference using approximate Bayesian methods. The main idea is to search for values of $\theta$ that produce pseudo-data $\z$ which is ``close enough'' to $\y$, and then retain these values to build an approximation to the posterior. To make the problem computationally practical, the comparison is generally carried out using summaries of the data. Let $S_n:\mathbb{R}^n\rightarrow\mathbb{R}^d$, $d\ge d_\theta$,  denote  the vector summary statistic mapping used in the analysis. Where there is no confusion, we write $S_n$ for the mapping or its value when evaluated at the observed data $\y$.

The method of BSL approximates the intractable distribution of $S_n(\y)\mid \theta$ using a Gaussian distribution with mean $b(\theta)=\mathbb{E}\{S_n(z)\mid \theta\}$ and variance $\Sigma_n(\theta)=\text{var}\{S_n(z)\mid \theta\}$,  both of which are calculated under $P^{(n)}_\theta$. {The map $\theta\mapsto \mathbb{E}\{S_n(z)\mid \theta\}$ may technically depend on $n$, however, if the data are iid or weakly dependent, and $S_n$ takes the form of an average, then $\mathbb{E}\{S_n(z)\mid \theta\}$ will not depend on $n$ in any meaningful way. As the majority of summaries used in BSL take the form of averages, it is reasonable to neglect the potential dependence on $n$. We also note that the notation $b(\theta)$ has also been used in several papers on ABC and BSL; see, e.g., \cite{FMRR2016}, and \cite{frazier2019bayesian}. }  

The synthetic likelihood is denoted as $N\{S_n;b(\theta),\Sigma_n(\theta)\}$, where $N(x;\mu,\Sigma)$ is the normal density function evaluated at $x$ with mean $\mu$ and covariance matrix $\Sigma$.  In typical applications
$b(\theta)$ and $\Sigma_n(\theta)$ are unknown, and are
estimated using the sample mean $\widehat{b}_n(\theta)$ and sample variance $\widehat{\Sigma}_n(\theta)$
calculated using $m$ independent simulated datasets. {These sample quantities are depicted as $n$-dependent, rather than $m$-dependent, as we later take $m$ to diverge as $n$ diverges. }

\cite{wood2010statistical} and \cite{price2018bayesian} suggest exploring the estimated synthetic likelihood  $N\{S_n;\widehat{b}_n(\theta),\widehat{\Sigma}_n(\theta)\}$ and obtaining point estimates of $\theta$ using Markov chain Monte Carlo. Following \cite{andrieu2009pseudo}, the use of $N\{S_n;\widehat{b}_n(\theta),\widehat{\Sigma}_n(\theta)\}$ within Markov chain Monte Carlo results in draws from the target posterior
\begin{flalign*}
\widehat{\pi}(\theta\mid S_n)&\propto {\pi(\theta)\widehat{g}_n(S_n\mid\theta)}
,
\\	\widehat{g}_n(S_n\mid\theta) &= \int N\{S_n;\widehat{b}_n(\theta),\widehat{\Sigma}_n(\theta)\} \prod_{i=1}^m \dt P^{(n)}_\theta\{S_n(\z^i)\}\,\dt S_n(\z^1)\,\dots\, \dt S_n(\z^m),
\end{flalign*}where $\widehat{g}_n(S_n\mid\theta)$ is the expectation of the estimated synthetic likelihood. {See \cite{price2018bayesian} and \cite{frazier2019bayesian} for a  discussion on the connection between pseudo-marginal methods and BSL.} In contrast, if $b(\theta)$ and $\Sigma_n(\theta)$ were known, we could target the exact posterior
\begin{equation}
{\pi}(\theta\mid S_n)\propto{\pi(\theta)N\{S_n;{b}_{}(\theta),{\Sigma}_n(\theta)\}}
.\label{eq:BSL_post_exact}
\end{equation}

\subsection{Model misspecification}\label{sec:misspec}
While Bayesian synthetic likelihood is based on a likelihood, it is not a likelihood for $\y$ but for $S_n(\y)$, and this ``likelihood'' is itself a normal approximation of the sampling distribution for the summaries. As such, interpreting the impact of model misspecification requires us to consider the loss of information that results from replacing the data by the summaries, as well as the use of  an approximation for the likelihood of the summaries. To cultivate intuition regarding the impact of these approximations when the model is misspecified, we explore these approximations when the mean and variance of the summaries are known. {{\color{black}The same general conclusions will follow in the case where the synthetic likelihood is estimated, but does not seem to add additional insights.}

Let $f_{_\bullet}^{(n)}$ denote the density function for the summary statistics $S_n(y)$ under $P^{(n)}_{_\bullet}$. The Kullback-Leibler divergence between the synthetic likelihood, $N\{\cdot;b(\theta),\Sigma_n(\theta)\}$, and the density function of the summaries, $f_{_\bullet}^{(n)}$, is 
\begin{flalign*}
\textsc{kl}[f^{(n)}_{_\bullet}(\cdot) \| N\{\cdot;b(\theta),\Sigma_n(\theta)\}]=&\int \log  \left[\frac{f_{_\bullet}^{(n)}(s)}{N\{s;b(\theta),\Sigma_n(\theta)\}}\right] f_{_\bullet}^{(n)}(s)\dt s\\= &\frac{1}{2}\log\left\{|{\Sigma}_n(\theta)|\right\}+\frac{1}{2}\int \left\{s-b(\theta)\right\}^{\top}\Sigma_n(\theta)^{-1}\left\{s-b(\theta)\right\}f_{_\bullet}^{(n)}(s)\dt s+C,
\end{flalign*}where $C$ is a constant that does not depend on $\theta$. 
For $b_{\bullet}=\int s f_{_\bullet}^{(n)}(s)\dt s$, and $V=\int \left(s-b_{\bullet}\right)\left(s-b_{\bullet}\right)^{\top}f_{_\bullet}^{(n)}(s)\dt s$, using properties of quadratic forms,
\begin{flalign*}
\textsc{kl}[f^{(n)}_{_\bullet}(\cdot) \| N\{\cdot;b(\theta),\Sigma_n(\theta)\}]&=\frac{1}{2}\log\left\{|{\Sigma}_n(\theta)|\right\}+\frac{1}{2}\text{tr}\left\{\Sigma_n(\theta)^{-1}V\right\}\\&+\frac{1}{2}\{b(\theta)-b_{\bullet}\}^{\top}\Sigma_n(\theta)^{-1}\{b(\theta)-b_{\bullet}\}+C .
\end{flalign*}

Thus, outside of cases where $f_{_\bullet}^{(n)}$ \textit{is a Gaussian density, the synthetic likelihood is always misspecified.} In addition, the asymptotic behavior of the Kullback-Leibler divergence is governed by the term $\{b(\theta)-b_{\bullet}\}^{\top}\Sigma_n(\theta)^{-1}\{b(\theta)-b_{\bullet}\}.$ Since the summaries are generally an average, $\Sigma_n^{}(\theta)$ is  generally of order $n^{-1}$,  and when $\Sigma_n^{}(\theta)$ is positive-definite it will be the case that 
$$
c_1\|{\sqrt{n}}\{b(\theta)-b_{\bullet}\}\|^2\le \|\Sigma_n(\theta)^{-1/2}\{b(\theta)-b_{\bullet}\}\|^2\le  c_2\|{\sqrt{n}}\{b(\theta)-b_{\bullet}\}\|^2,
$$for some $0<c_1\le c_2<\infty$. Therefore, if there exists no $\theta\in\Theta$ such that $b(\theta)=b_{\bullet}$ as $n\rightarrow\infty$, then $$\inf_{\theta\in\Theta}\textsc{kl}[f^{(n)}_{_\bullet}(\cdot) \| N\{\cdot;b(\theta),\Sigma_n(\theta)\}]\rightarrow\infty.$$

The above shows that the meaningful concept of model misspecification in synthetic likelihood is that there does not exist any $\theta\in\Theta$ such that $b_{}(\theta)=b_{\bullet}$. This condition is called model incompatibility by \cite{marin2014relevant}, and features in the literature on model misspecification in approximate Bayesian computation (\citealp{frazier2020model}). We then say that the model is {misspecified, or incompatible,} if
\begin{flalign}\label{eq:bslmiss}
\lim_{n \rightarrow \infty}\inf_{\theta\in\Theta}\{b_{}(\theta)-b_{\bullet}\}^{\top}\left\{n\Sigma_n(\theta)\right\}^{-1}\{b_{}(\theta)-b_{\bullet}\}>0.
\end{flalign}
Throughout the remainder, model misspecification  is interpreted in terms of equation \eqref{eq:bslmiss}.

\subsection{Consequences of model misspecification}\label{sec:maexam}

We briefly demonstrate the consequences of model misspecification in BSL by returning to the simple running example (see Section \ref{sec:maexamintro} for details). Depending on the level of model misspecification, the posterior can display Gaussian-like posterior concentration,  bi-modality, and/or concentration onto the boundary of the parameter space. 

\subsubsection*{Example: Moving Average model}\label{sec:maexam}
The researcher believes $y_1,\dots,y_n$ is generated according to an MA(1) model, see equation \eqref{MA2}, and our prior beliefs are uniform over $[-1,1]$. The summary statistics are
$\gamma_j(y_{1:n})=\frac{1}{n}%
\sum_{t=1+j}^{n}y_{t}y_{t-j}$, for $j\in\{0,1\}$, and $S_n\left( y\right) =(\gamma_0(y_{1:n}), \gamma_1(y_{1:n}))^\top$. In this example, the mean and variance of the summaries can be calculated exactly, with these quantities then used to construct the exact Bayesian synthetic likelihood posterior. The mean of the summaries is
$
b(\theta)=\mathbb{E}\{S_n(z_{1:n})|\theta\}=\left(1+\theta^2,\theta\right)^{\top}. 
$ 

The variance of the summaries also has a closed-form, and can be derived using the results of \cite{de1981investigation} on the variance and covariance of sample autocorrelations in autoregressive integrated moving average models.
Partitioning $\Sigma_n(\theta)$ as $$
\Sigma_n(\theta)=\begin{pmatrix}
\Sigma_{11,n}(\theta)&\Sigma_{12,n}(\theta)\\\Sigma_{12,n}(\theta)&\Sigma_{22,n}(\theta)
\end{pmatrix},
$$ the leading terms in the components of $\Sigma_n(\theta)$ are as follows:\footnote{The precise formulas are too long to state analytically. The interested reader is referred to the supplementary material where it is given in full detail.}  
\begin{flalign*}
\Sigma_{11,n}(\theta)&=(2/n^4)\left[n^3 \cdot (1+\theta^2)^2+2 \cdot n^2 \cdot (n-1) \cdot \theta^2\right]+O(n^{-2})\\
\Sigma_{22,n}(\theta)&=(1/n^2)\left[(n-1) \cdot ((1+\theta^2)^2+\theta^2)+2 \cdot (n-2) \cdot \theta^2\right]+O(n^{-2})\\
\Sigma_{12,n}(\theta)&=(2/n^4)\left[n^2 \cdot ( (n-1) \cdot (2 \cdot (1+\theta^2) \cdot \theta))\right]+O(n^{-2}).
\end{flalign*}From this representation, it is clear that each term has a dominant $O(n^{-1})$ term, and that $n\Sigma_n(\theta)$ is positive-definite for all $\theta\in[-1,1]$.

Recall that the actual data generating process (DGP) for $y_{1:n}$ evolves according to the stochastic volatility (SV) model  in equation \eqref{trueDGP}. Under this DGP the summaries $S_n(\y)$ converge in probability towards
\begin{equation*}
b_{\bullet}:=(b_{\bullet,0},b_{\bullet,1})^\top=\begin{pmatrix}\exp\left( \frac{\omega}{1-\rho}+\frac{1}{2}\frac{\sigma_v^2}{1-\rho^2}\right)
,&0
\end{pmatrix}^{\top}.
\end{equation*}Therefore, if for given values of $\omega,\sigma_v$ and $\rho$ there does not exist a value of $\theta$ such that $$\exp\left\{ {\omega}/{(1-\rho)}+\frac{1}{2}{\sigma_v^2}/{(1-\rho^2)}\right\}=1+\theta^2,$$ we cannot match the first summary, and the assumed model is misspecified. When $0<b_{\bullet,0}\le1$, the unique minimum of $\|b(\theta)-b_{\bullet}\|$ is achieved at $\theta=0$, and it is this value onto which we would hope the BSL posterior would concentrate asymptotically. However, as we have already seen from Figure \ref{fig:ma2_theta}, for certain values of $\omega,\sigma_v,\rho$, {\color{black}and depending on the sample size, the posterior can be bi-modal.} 

{\color{black}To help explain this phenomena, we now analyze the posterior across various levels of model misspecification by fixing the value of the observed summaries $S_n(\y)$ at its limit $b_{\bullet}=(b_{\bullet,0},b_{\bullet,1})^\top$, and by changing the value of $b_{\bullet,0}$. To this end, we plot the posteriors for three values of $n=100,500,1000$, and across six different values of the first summary statistic $b_{\bullet,0}\in\{0.01,0.10,0.25,0.50,0.75,0.99\}  $. These values represent a situation of significant misspecification, at $b_{\bullet,0}=0.01$, tending towards no misspecification, $b_{\bullet,0}=0.99$. We plot the resulting posteriors graphically in Figure \ref{fig2}. The results demonstrate that the behavior of the posterior varies markedly as the level of model misspecification changes. 

At small samples sizes $(n=100)$ the posteriors are (nearly all) uni-modal with a mode whose location depends on the level of misspecification. However, for larger levels of misspecification, as the sample size increases the posterior concentrates mass on two distinct modes, with the heights of the two modes varying with the level of misspecification. 

The results in Figure \ref{fig2} are surprising, and show that depending on the level of model misspecification, the posterior can display concentration onto the boundary of the parameter space ($b_{\bullet,0}=0.01$); bi-modality with concentration occurring on the interior of the parameters space ($b_{\bullet,0}\in\{0.10,0.25\}$); a region of ``flatness'' ($b_{\bullet,0}=0.50$); and approximate Gaussianity ($b_{\bullet,0}\in\{0.75,0.99\}$). We speculate that the region of posterior flatness may be the result of two modes on either size of $\theta=0$, so that in neighbourhoods around $\theta=0$ the posterior appears flat.\footnote{We thank an anonymous referee for bringing this possible interpretation to our attention.} From a practical standpoint, however, whether the posterior is genuinely flat near $\theta=0$, or has two close modes on either side of $\theta=0$ is largely irrelevant as it would require a very large sample size to reliably distinguish between the two cases. {In Supplementary Appendix E.3 
we expand on the mechanisms causing this posterior  behavior, and give additional discussion on the cause of the posterior ``flatness'' observed in Figure \ref{fig2}.} }

{\color{black}At larger levels of model misspecification, the values onto which the exact posterior in \eqref{eq:BSL_post_exact} is concentrating are not at all related to the values of $\theta$ under which $\|b(\theta)-b_{\bullet}\|$ is small. In comparison, if one were to apply ABC based on $\|\cdot\|$ in the same example, the resulting posterior would be uni-modal and have the majority of its mass near the origin ($\theta=0$). This is due to the fact that when $0<b_{_\bullet,0}\le1$, the distance $\|b(\theta)-b_{\bullet}\|$ is uniquely minimized at $\theta=0$; hence, following Theorem 1 in \cite{frazier2020model}, the ABC posterior would concentrate mass onto $\theta=0$. }

{\color{black}The behavior observed in the left-most panel of Figure \ref{fig2} is similar to, but distinct from, the behavior documented by \cite{frazier2019robust} in the MA(1) model. In that work, for a sample size of $n$=100 the authors empirically showed that an importance sampling estimate of the posterior $\widehat{\pi}(\theta\mid S_n)$ placed nearly all of its mass near the boundaries of the parameter space, i.e., $\theta\pm1$. 
}	
{\color{black}In contrast to \cite{frazier2019robust}, all of the above results pertain to \textit{the exact posterior} $\pi(\theta\mid S_n)$ that results from using the exact mean and variance of the summaries. As such, Figure \ref{fig2} demonstrates the root cause of the irregular posterior behavior: the behavior is not due to small sample sizes, or Monte Carlo approximations, but is caused by the asymptotic (non-standard) behavior of the exact synthetic likelihood $N\{S_n;b(\theta),\Sigma_n(\theta)\}$. Furthermore, we remark that the behavior observed in this current example is vastly more diverse than the behavior observed by \cite{frazier2019robust}. It is the diversity of this behavior which we theoretically investigate in the following section. }

\begin{figure}[h!]
	\centering 
	\setlength\figureheight{3.5cm} 
	\setlength\figurewidth{3.5cm} 
	\include{sl_postma1_R2} 	
\vspace{-2cm}
	\caption{Comparison of the exact (synthetic likelihood) posterior under different levels of model misspecification. The solid line corresponds to $n=100$, the dashed line to $n=500$ and the dotted line to $n=1000$. } 
	\label{fig2} 
\end{figure}


\section{Asymptotic behavior of BSL}\label{sec:asymp}
We now characterize the behavior of $\widehat{\pi}(\theta\mid S_n)$ when the assumed model is misspecified. The following notations are used to make the results easier to state and follow.
For $x\in\mathbb{R}$, $| x | $ denotes the absolute value of $x$, and for $x\in\mathbb{R}^p$, $\|x\|$ denotes the Euclidean norm of $x$. For $A$ denoting a square matrix, we abuse notation and let $|A|$ denote the determinant of $A$ and $\|A\|$ any convenient matrix norm. The terms $\lambda_{\text{max}}(A)$ and $\lambda_{\text{min}}(A)$ denote the maximal and minimal eigenvalues of $A$. Throughout, $C$
denotes a generic positive constant that can change with each usage. For real-valued sequences $\{a_{n}\}_{n\geq 1}$ and
$\{d_{n}\}_{n\geq 1}$: $a_{n}\lesssim d_{n}$ implies $a_{n}\leq Cd_{n}$ for
some finite $C>0$ and all $n$ large; $a_{n}\asymp d_{n}$ implies $a_{n}\lesssim d_{n}$ and $d_n \lesssim a_{n}$. For $x_{n}$ a random variable, $x_{n}=o_{p}(a_{n})$ 
and $x_{n}=O_{p}(a_{n})$ have their usual definitions. Likewise, $\plim_{n\rightarrow\infty} x_n$ denotes the probability limit of $x_n$. All limits are taken as $n\rightarrow\infty$ so that when no confusion will result, we use $\lim_{}$ and $\plim$ to denote $\lim_{n\rightarrow\infty}$ and $\plim_{n\rightarrow\infty}$, respectively. The notation $\Rightarrow$ denotes weak convergence under $P^{(n)}_{_\bullet}$.  Proofs of all stated results are given in the Supplementary Appendix.

\subsection{Asymptotic behavior: multiple modes}\label{sec:theory}
Let $g_n(\cdot\mid\theta):=N\{\cdot;b(\theta),\Sigma_n(\theta)\}$ denote the synthetic likelihood with known mean and variance, and define its score and limit counterpart as
\begin{flalign*}
{M}_n(\theta)&=n^{-1}{\partial \log g_n(S_n\mid\theta)}/{\partial\theta}, \quad
M(\theta)=\plim_{}{M}_n(\theta).
\end{flalign*}Likewise, define the Hessian of $n^{-1}\log g_n(\cdot\mid\theta)$ and its limit counterpart as 
\begin{flalign*}
{H}_n(\theta)&=n^{-1}{\partial^2 \log g_n(S_n\mid\theta)}/{\partial\theta\partial\theta^\top}, \quad
H(\theta)=\plim_{}{H}_n(\theta).
\end{flalign*}
The posterior multi-modality in the running example can be traced back to the existence of multiple roots to the score equation $0=M_n(\theta)$, e.g., $\theta_{1,n}$ and $\theta_{2,n}$. In such cases, if $-H_n(\theta_{j,n})$ is positive-definite, for $j=1,2$, then the posterior will exhibit multiple modes (around $\theta_{1,n}$ and $\theta_{2,n}$). {\color{black}Let $\text{Int}\left(\Theta\right)$ denote the interior of $\Theta$, and let
$\Theta_\star:=\left\{\theta\in\text{Int}\left(\Theta\right):0=M(\theta)\right\}$ denote the collection of asymptotic roots. }

We maintain the following regularity conditions. 

\begin{assumption}\label{ass:three} (i) $\Theta\subset\mathbb{R}^{d_\theta}$ is compact; (ii) The map $\theta\mapsto \log g_n(\cdot\mid\theta)$ is twice continuously differentiable on $\mathrm{Int}(\Theta)$.
\end{assumption}

\begin{assumption}\label{ass:one}
	There exist $b_{\bullet}\in\mathbb{R}^d$, $d\ge d_\theta$, and a positive-definite matrix $V$ such that ${\sqrt{n}}\left(S_ { n }-b_{\bullet}\right)\Rightarrow N(0,V)$. 
\end{assumption}

\begin{assumption}\label{ass:limits} The set $\Theta_\star$ is non-empty and finite. For some $\delta>0$, at least one $\theta\in\Theta_\star$ satisfies $0<\delta\le\lambda_{\text{min}}\left\{-H(\theta)\right\}\le1/\delta$.
\end{assumption}
{\color{black}
\begin{remark}
		The compactness in Assumption \ref{ass:three}(i) and the smoothness condition in Assumption \ref{ass:three}(ii) ensure the existence of a solution to $0=M_n(\theta)$, and are standard regularity conditions employed in the analysis of frequentist point estimators (see, e.g., \citealp{jennrich1969asymptotic} for a classical reference, and Chapter 5 of \citealp{van2000asymptotic} for a textbook treatment). We note, however, that both conditions can be relaxed by requiring stronger conditions on $M_n(\theta)$ and $H_n(\theta)$. Assumption \ref{ass:one} is standard in the literature on approximate Bayesian methods, and requires that the observed summary statistics are asymptotically Gaussian with a well-behaved covariance matrix. Assumption \ref{ass:limits} restricts $\log g_n(\cdot\mid\theta)$ to have a finite collection of local maxima, all of which lie in $\mathrm{Int}(\Theta)$.\footnote{{The behavior of the posterior when a root is on the boundary of the parameter space can be complicated, and so we leave a detailed study of this situation for future research.}} Importantly, as illustrated in the simple running example, there is no reason to suspect that values in $\Theta_\star$ deliver small values of $\|b(\theta)-b_{\bullet}\|$. {\color{black}Recall that $b(\theta)$ denotes the (asymptotic) mean of the simulated summaries under $P_\theta^{(n)}$, while $b_{\bullet}$ denotes the (asymptotic) mean of the observed summaries under $P_{_\bullet}^{(n)}$.}
\end{remark}
}

\begin{lemma}\label{lem:cons}
	If Assumptions \ref{ass:three}-\ref{ass:limits}  are satisfied, then there exists $\theta_n$ in $\Theta$ that solves $0=M_n(\theta)$, and  $\|\theta_n-\theta_\star\|=o_p(1)$ for some $\theta_\star\in\Theta_\star$.
\end{lemma}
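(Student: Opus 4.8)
The plan is to establish consistency by a standard M-estimation argument adapted to the misspecified SL setting, treating $\ln g_n(S_n|\theta)$ as an objective function whose normalized score $M_n(\theta)$ converges to $M(\theta)$. First I would make precise the convergence of $M_n(\theta)$ and $H_n(\theta)$ to their limits $M(\theta)$ and $H(\theta)$. Writing out the SL score explicitly from $\ln g_n(S_n|\theta) = -\tfrac{1}{2}\ln|\Sigma_n(\theta)| - \tfrac{1}{2}\{S_n-b(\theta)\}^{\intercal}\Sigma_n^{-1}(\theta)\{S_n-b(\theta)\} + C$, one sees that $M_n(\theta)$ depends on $S_n$ only through the derivatives of $b(\theta)$ and $\Sigma_n(\theta)$ contracted against $\Sigma_n^{-1}(\theta)\{S_n-b(\theta)\}$. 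Under Assumption \ref{ass:one}, $\|S_n-b_0\|=o_p(1)$, and since $n\Sigma_n(\theta)$ is positive-definite and bounded (so $\Sigma_n^{-1}(\theta)$ is of order $n$), the normalization by $n^{-1}$ in the definition of $M_n$ is exactly what delivers a nondegenerate probability limit $M(\theta)$ in which $S_n$ is replaced by $b_0$. The same reasoning gives $H(\theta)=\plim H_n(\theta)$.

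Next I would argue that this convergence is uniform on $\Theta$. Assumption \ref{ass:three}(i) gives compactness of $\Theta$, and Assumption \ref{ass:three}(ii) gives twice continuous differentiability of $\theta\mapsto\ln g_n(\cdot|\theta)$ on $\text{Int}(\Theta)$; together with the exact formulas for $b(\theta)$ and $\Sigma_n(\theta)$ these yield equicontinuity of the family $\{M_n(\cdot)\}$, so that pointwise convergence can be upgraded to uniform convergence, $\sup_{\theta\in\Theta}\|M_n(\theta)-M(\theta)\|=o_p(1)$. This is the step I expect to require the most care, since one must control the stochastic fluctuation of $M_n$ uniformly in $\theta$ while the quadratic-form structure interacts with the $O(n)$ scaling of $\Sigma_n^{-1}(\theta)$; the key is that after the $n^{-1}$ normalization the only randomness enters linearly through $\sqrt{n}(S_n-b_0)=O_p(1)$, multiplied by deterministic coefficients that are continuous on the compact set $\Theta$.

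With uniform convergence in hand, existence and consistency of a local maximizer follow from a root-finding argument. By Assumption \ref{ass:limits}, $\Theta_\star$ is non-empty and finite, and there is some $\theta_\star\in\Theta_\star$ with $\lambda_{\text{min}}\{-H(\theta_\star)\}\ge\delta>0$, so $M(\theta_\star)=0$ and the limiting Hessian is negative-definite at $\theta_\star$. The plan is to fix a small closed ball $B(\theta_\star,\epsilon)\subset\text{Int}(\Theta)$ on which, by continuity of $H$, $-H(\theta)$ remains negative-definite, and on whose boundary $M(\theta)$ is bounded away from zero and points inward (i.e.\ $\ln g_n$, in the limit, is strictly smaller on the boundary than at the center). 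Uniform convergence then transfers these properties to $M_n$ and $\ln g_n(S_n|\cdot)$ for $n$ large with probability approaching one: the continuous function $\theta\mapsto\ln g_n(S_n|\theta)$ attains an interior maximum $\theta_n$ on $B(\theta_\star,\epsilon)$, which must solve $0=M_n(\theta_n)$, and since $H_n(\theta_n)$ is negative-definite (again by uniform convergence of $H_n$ to $H$), $\theta_n$ is a strict local maximum. Letting $\epsilon\downarrow0$ along a suitable argument gives $\|\theta_n-\theta_\star\|=o_p(1)$. Finiteness of $\Theta_\star$ ensures the modes are isolated so that the localization argument is well posed; I would note that the statement only claims consistency to \emph{some} element of $\Theta_\star$, which is exactly what the local argument delivers without needing a global uniqueness condition.
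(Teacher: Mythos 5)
Your proposal is correct, but it takes a different route from the paper. The paper proves this lemma as Part~1 of Lemma~\ref{lemma:freq} by verifying the hypotheses of Theorem~2 of \cite{yuan1998asymptotics}, a result on roots of estimating equations: it checks that $M_n(\theta_\star)=o_p(1)$, that $M_n$ is continuously differentiable near $\theta_\star$, that $H_n(\theta)$ converges to $H(\theta)$ on a $\delta$-ball around $\theta_\star$, and that $H(\theta_\star)$ is non-singular, and then invokes the cited theorem for existence and consistency of a root of $0=M_n(\theta)$. You instead run the classical M-estimation localization directly on the criterion: a strict gap between the limiting criterion at $\theta_\star$ and its values on the boundary of a small ball (which follows from $M(\theta_\star)=0$ together with negative-definiteness of $H(\theta_\star)$), transferred to $\ln g_n(S_n|\cdot)$ by local uniform convergence, forces an interior maximizer that solves the score equation and, by convergence of $H_n$, has negative-definite Hessian. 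Both arguments rest on exactly the same ingredients, so yours is a legitimate, self-contained, and more elementary substitute for the citation; it also makes the ``strict local maximum'' claim explicit, which the paper leaves implicit in its Z-estimation framing. Two points to tighten: everything you need is local, and your claim of uniform convergence of $M_n$ over all of $\Theta$ is stronger than the lemma's stated assumptions warrant (the lemma does not invoke Assumption~\ref{ass:two}, and even the paper's own verification only controls $M_n$ and $H_n$ on a neighbourhood of $\theta_\star$), so you should restrict the uniformity claim to a compact ball in $\mathrm{Int}(\Theta)$ around $\theta_\star$; and the closing ``$\epsilon\downarrow 0$'' step should be stated as the usual nearest-root/diagonal selection so that a single measurable sequence $\theta_n$ with $\|\theta_n-\theta_\star\|=o_p(1)$ is actually produced.
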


Consider that $M(\theta)$ has zeros $\theta_{1,\star}$ and $\theta_{2,\star}$, with $-H(\theta_{1,\star})$ and $-H(\theta_{2,\star})$ both positive-definite. Since both values satisfy the sufficient conditions in Lemma \ref{lem:cons}, it follows that $\theta_{1,n}=\theta_{1,\star}+o_p(1)$ and $\theta_{2,n}=\theta_{2,\star}+o_p(1)$. Consequently, we should expect that the  posterior assigns non-vanishing probability mass to both points. Therefore, if we wish to analyze $\widehat{\pi}(\theta\mid S_n)$ in misspecified models, we must restrict our attention to regions around roots of the synthetic likelihood score equation.

\begin{assumption}\label{ass:hess}For any $\theta_\star\in\Theta_\star$, and some $\delta>0$, for all  $\|\theta-\theta_\star\|\le\delta$, there exists a $K>0$ such that  $\|\partial^2 \log g_n(S_n\mid\theta)/\partial\theta_j\partial\theta_i-\partial^2 \log g_n(S_n\mid\theta_\star)/\partial\theta_j\partial\theta_i\|\le K\|\theta-\theta_\star\|$ for $i,j\in\{1,\dots,d_\theta\}.$
\end{assumption}

{\color{black}
	\begin{remark}
	The smoothness condition in Assumption \ref{ass:three} is a commonly used sufficient condition to obtain second-order approximations of frequentist criterion functions (see, e.g., Chapter 5 in \citealp{van2000asymptotic}), and ensures that $\log g_n(S_n\mid\theta)$ admits a valid expansion around each $\theta_\star\in\Theta_\star$. 
	Assumption \ref{ass:hess} gives sufficient regularity to ensure that the remainder term in such an expansion can be appropriately controlled. In this way, Assumptions \ref{ass:three}, and \ref{ass:hess} resemble commonly employed assumptions in frequentist point-estimation theory, and will be satisfied so long as the mappings $\theta\mapsto b(\theta)$ and $\theta\mapsto \Sigma_n(\theta)$ are smooth enough.\footnote{Given Assumption \ref{ass:three}, a sufficient condition for Assumption \ref{ass:hess} is that $\theta\mapsto b(\theta)$ and $\theta\mapsto \Sigma_n(\theta)$ are twice continuously differentiable.} However, the possible multi-modality of the limit synthetic likelihood (Assumption \ref{ass:limits}) requires that these assumptions hold at each $\theta_\star\in\Theta_\star$. The smoothness conditions in Assumptions \ref{ass:three} and \ref{ass:hess} are stronger than those employed by \cite{frazier2019bayesian} to analyze the posterior in correctly specified models. In Supplementary Appendix D.1,  
	 we show that the smoothness assumptions in \cite{frazier2019bayesian} are insufficient to deduce the behavior of the posterior in misspecified models, and we discuss why additional smoothness conditions are necessary in misspecified models.
\end{remark}
}

\begin{assumption}\label{ass:two}
	Let $A_n(\theta)$ denote either $\widehat{\Sigma}_n(\theta)$ or $\Sigma_{n}(\theta)$. For some $\delta>0$, any $\theta_\star\in\Theta_\star$, and all $\|\theta-\theta_\star\|\leq\delta$, the sequence of matrices $A_n(\theta)$ satisfy: (i) for all $n$ large enough, there exist positive constants $c_1\leq c_2$, such that $0<{c}_1\leq |nA_{n}({\theta })|\leq {c}_2<\infty$; (ii) there exists a matrix function $\Sigma(\theta)$ that is continuous over $\Theta$, is positive-definite for all $\|\theta-\theta_\star\|\le\delta$, and any $\theta_\star\in\Theta$, and is such that $\sup_{\theta\in\Theta}\left\|nA_n(\theta)-\Sigma(\theta)\right\|=o_{p}(1)$.
\end{assumption}

\begin{assumption}\label{ass:four}
	For any $\delta>0$, and any $\theta \in\Theta_\star$, $\pi(\theta_\star)>0$, and $\pi(\cdot)$ is continuous on $\{\theta\in\Theta:\|\theta-\theta_\star\|\leq\delta\}$.
\end{assumption}
{\color{black}
\begin{remark}
	Assumption \ref{ass:two} places sufficient regularity on the covariance matrix used in BSL to ensure the posterior asymptotically concentrates on $\Theta_\star$, and is nearly identical to the regularity conditions for $\Sigma_n(\theta)$ used by \cite{frazier2019bayesian} in \textit{correctly specified models} (see their {Assumption 3}). For a detailed discussion on Assumption \ref{ass:two}, we refer the interested reader to \cite{frazier2019bayesian}. 
	Assumption \ref{ass:four} is a standard regularity condition in the theoretical analysis of Bayesian posterior distributions for Euclidean valued parameters. 
\end{remark}
}
\begin{assumption}\label{ass:five}For all $\theta\in\Theta$, $\E\left\{\|S_{n}(\z)\|^4\mid \theta\right\}<\infty.$
\end{assumption}
{\color{black}
\begin{remark}
	Assumption \ref{ass:five} requires that the simulated summary statistics admit at least a finite fourth moment, which is required to ensure that the posterior $\widehat{\pi}(\theta\mid S_n)$ exists, and concentrates toward the exact posterior $\pi(\theta\mid S_n)$ as $m\rightarrow+\infty$. This condition is substantially weaker than the corresponding condition used in \cite{frazier2019bayesian}, which required that the summary statistics have a sub-Gaussian tail. Since the distribution of $S_n(z)$ is intractable, analytically verifying Assumption \ref{ass:five} is likely infeasible in complex models. However, it is relatively straightforward to verify Assumptions \ref{ass:three}, \ref{ass:hess}, \ref{ass:two} and \ref{ass:five} using simulation from the assumed model. We refer the interested reader to Supplementary Appendix E.2 
	for additional discussion of this point, and for verification of Assumptions \ref{ass:three}-\ref{ass:five} in the running example. 	
\end{remark}
}

To state our first result, let $\Delta=\{-H(\theta_\star)\}^{-1}$, let $t={\sqrt{n}}(\theta-\theta_n)$ be a local parameter, and $\widehat{\pi}(t\mid S_n)=\widehat{\pi}(\theta_n+t/\sqrt{n}\mid S_n)/\sqrt{n}^{d_\theta}$ the posterior for $t$. 

\begin{theorem}[Asymptotic shape]\label{thm:one} If Assumptions \ref{ass:three}-\ref{ass:five} are satisfied, then for each $\theta_\star\in\Theta_\star$ the following is satisfied: for any finite $\gamma>0$, and $m\rightarrow\infty$ as $n\rightarrow\infty$,
	$$
	\left|\int_{\|t\|\le\gamma}\widehat{\pi}(t\mid S_n)\dt t-\int_{\|t\|\le\gamma}\pi_\star(t)\dt t\right|=O_p(1/m),
	$$ for some density function $\pi_\star(t)\propto N\{t;0,\Delta\}$.
\end{theorem}

Consider again that $\Theta_\star$ contains only $\theta_{1,\star}$ and $\theta_{2,\star}$; Theorem \ref{thm:one} then demonstrates that in a shrinking neighborhood of $\theta_{1,\star}$ (respectively, $\theta_{2,\star}$) the posterior $\widehat{\pi}(t\mid S_n)$ is proportional to $N\{t;0,\Delta_1\}$ (respectively, $N\{t;0,\Delta_2\}$), where $\Delta_j=\{-H(\theta_{j,\star})\}^{-1}$. Thus, $\widehat{\pi}(t\mid S_n)$ is not asymptotically Gaussian, but is mixed Gaussian with modes near $\theta_{1,\star}$ and $\theta_{2,\star}$.

\begin{remark}	
	While multi-modality of the posterior $\widehat{\pi}(\theta\mid S_n)$ can result from model misspecification, nothing confines Theorem \ref{thm:one} exclusively to misspecified models. {\color{black}Consider that there are two values of $\theta$, say $\theta_{1,\star}$ and $\theta_{2,\star}$,  such that $b(\theta_{j,\star})=b_{\bullet}$.} In this case, the model is ``correctly specified'', and both $\theta_{1,\star}$ and $\theta_{2,\star}$ solve $0=M(\theta)$. Theorem \ref{thm:one} then implies that the posterior will concentrate around $\theta_{1,\star}$ and $\theta_{2,\star}$. Therefore, if $\theta\mapsto b(\theta)$ does not identity a unique value of $\theta$ such that $b(\theta)=b_{\bullet}$, then the result of Theorem \ref{thm:one} applies.\footnote{In the case of correctly specified models, $\Delta$ in Theorem \ref{thm:one} has the form $\Delta=[\{\partial b(\theta)/\partial\theta^\top\}^\top \Sigma(\theta)^{-1}\{\partial b(\theta)/\partial\theta^\top\}]^{-1}$. However, when the model is misspecified this is not the case, and the general definition of $\Delta$ is given in Supplementary Appendix C. 
	}
	An example demonstrating the conclusions of Theorem \ref{thm:one} in correctly specified models is given in Supplementary Appendix E.5.
\end{remark}

\begin{remark}
	One interpretation of the posterior $\widehat{\pi}(\theta\mid S_n)$ is that we are conducting generalized Bayesian inference using a scoring rule that assesses ``goodness of fit'' relative to the first two moments of the distribution for $S_n(\y)$. Following \cite{gneiting2007strictly}, a score which only depends on first and second moments ``is strictly proper relative to any convex class of probability measures characterized by the first two moments.'' However, even if the true distribution of $S_n(\y)$ is asymptotically Gaussian with mean $b_{\bullet}$ and variance $V$, there may be no value in $\Theta$ such that $b(\theta)=b_{\bullet}$ and $\Sigma(\theta)=V$; since $b(\theta)$ does not span $\mathbb{R}^{d}$, nor does $\Sigma(\theta)$ span the space of possible variance matrices. Therefore,  there is no guarantee in general that the posterior will concentrate onto a unique element even though our inferences are based on a strictly proper scoring rule.
\end{remark}

{\color{black}
\begin{remark}\label{rem:strat}
	The results and proof strategies used in \cite{frazier2019bayesian} to deduce the  asymptotic behavior of the posterior in correctly specified models are entirely dependent on the existence of a unique $\theta\in\Theta$ such that $b(\theta)=b_{\bullet}$. In particular, the proof strategy used by \cite{frazier2019bayesian} hinges on the validity of a particular global approximation to $\log g_n(S_n\mid\theta)$ which explicitly requires that $b(\theta)=b_{\bullet}$ for some $\theta\in\Theta$. As such, when the model is misspecified, the results and arguments used in \cite{frazier2019bayesian} are invalid, and we require novel arguments to
	derive the behavior of $\widehat{\pi}(\theta\mid S_n)$. 
	For additional discussion of this point, we refer the interested reader to Supplementary Appendix D.1.
\end{remark}
}

\subsection{Asymptotic behavior: single mode}\label{sec:single}

Even if the model is misspecified, if there exists a unique solution to $0=M(\theta)$, then $\widehat{\pi}(\theta\mid S_n)$ will be asymptotically Gaussian. To deduce such a result, we must restrict the set $\Theta_\star$ in Assumption \ref{ass:limits} to be a singleton.

\begin{assumption}\label{ass:strong:three}
	$\Theta_\star=\{\theta_\star\}$, and for some $\delta>0$, $0<\delta\le \lambda_{\text{min}}\{-H(\theta_\star)\}\le 1/\delta$.
\end{assumption}

Define the set $\mathcal{T}_n=\{t:t={\sqrt{n}}(\theta-\theta_n),\theta\in\Theta\}$. 
\begin{theorem}[Asymptotic Shape]\label{thm:two} If Assumptions \ref{ass:three}, \ref{ass:one}, and Assumptions \ref{ass:hess}-\ref{ass:strong:three} are satisfied, then, for $m\rightarrow\infty $ as $n\rightarrow\infty$,
	$
	\int_{\mathcal{T}_n}\|t\|\left|\widehat{\pi}(t\mid S_n)-N\{t;0,\Delta\}\right|\dt t=O_p(1/m).
	$
\end{theorem}

{{\color{black}Even when $\Theta_\star$ is a singleton, the model is still misspecified;  there does not exist a $\theta\in\Theta$ such that $b(\theta)=b_{\bullet}$. Hence, as discussed in Remark \ref{rem:strat}, the results and arguments presented in \cite{frazier2019bayesian} cannot be used to establish the behavior of $\widehat{\pi}(\theta\mid S_n)$ in this case; please see Supplementary Appendix D.1 
for further details.}} 

When $\Theta_\star$ is a singleton, if the score equations $M_n(\theta_\star)$ satisfy a central limit theorem, then the BSL posterior mean will be asymptotically Gaussian.
\begin{assumption}\label{ass:norm}For some matrix $W_\star$,  ${\sqrt{n}}M_n(\theta_\star)\Rightarrow N(0,W_\star)$.
\end{assumption}
\begin{corollary}\label{corr:bslm}
	For $\overline{\theta}_n=\int_{\Theta}\theta\widehat{\pi}(\theta\mid S_n)\dt\theta$,  if the conditions in Theorem \ref{thm:two} are satisfied and $\sqrt{n}/m=o(1)$, then
	$
	\sqrt{n}\left(\overline{\theta}_n-\theta_{\star}\right) \Rightarrow N\left(0, \Delta^{}W_\star\Delta^{\top} \right)$.
\end{corollary}

Theorem \ref{thm:two} demonstrates that if the model is misspecified, but the synthetic likelihood has a single mode asymptotically, then the BSL posterior resembles a shrinking Gaussian density in large samples. 
The posterior shape in Theorem \ref{thm:two} is in stark contrast to that exhibited by the approximate Bayesian computation posterior under model misspecification. Let $\theta_{_\bullet}$ denote the minimizer of $\|b(\theta)-b_{\bullet}\|$ and let $\epsilon_{_\bullet}=\|b(\theta_{_\bullet})-b_{\bullet}\|$. If the tolerance $\epsilon_n$ satisfies ${\sqrt{n}}(\epsilon_n-\epsilon_{_\bullet})\rightarrow0$, from above, then Theorem 2 in \cite{frazier2020model} demonstrates that asymptotically the approximate Bayesian computation posterior is proportional to the following density:$${N}\left\{\left(\left\|A \{b(\theta_{_\bullet})-b_{\bullet}\}\right\| \epsilon_{_\bullet}\right)^{-1}\left[{-\left\{A{\sqrt{n}}(S_n-b_{\bullet})\right\}^\top\left[A \{b(\theta_{_\bullet})-b_{\bullet}\}\right] \epsilon_{_\bullet}}{}-{x^{\top} L(\theta_{_\bullet}) x/4}\right];0,I_{d_\theta}\right\},$$  for $x={n}^{1/4}(\theta-\theta_{_\bullet})$, $A=\Sigma(\theta_{_\bullet})^{-1/2}$, and $L(\theta_{_\bullet})=(\partial^2/\partial\theta\partial\theta^\top)\|b(\theta)-b_{\bullet}\||_{\theta=\theta_{_\bullet}}$. 

{\color{black}It is clear that the ABC and BSL posteriors produce significantly different inferences in misspecified models. Even in the case where the BSL posterior concentrates onto a single mode, the two posteriors will generally concentrate onto different points in $\Theta$. This is an interesting contrast to the case of correctly specified models, where the two posteriors have the same shape asymptotically (see \citealp{LF2016}, \citealp{FMRR2016} and \citealp{frazier2019bayesian} for details). 
}

Furthermore, the behavior of the approximate Bayesian computation posterior mean in misspecified models is currently unknown. However, the form of the above limiting posterior suggests that its behavior may be non-standard. If this is indeed the case, it would again be a contrast to the case of Bayesian synthetic likelihood, which, from Corollary \ref{corr:bslm}, has a posterior mean that exhibits standard asymptotic behavior when the synthetic likelihood has a single mode asymptotically. 

\begin{remark}\label{rem:cover}
 	Theorem \ref{thm:two} implies that the width of posterior credible sets is determined by $\Delta$. However, Corollary \ref{corr:bslm} implies that the asymptotic variance of the BSL posterior mean is $\Delta W_\star \Delta^{\top}$. If $b(\theta_\star)\neq b_{\bullet}$ and $\Delta\ne W_\star$, we can immediately conclude that
$$
\int_{\mathcal{T}_n} tt^{\top}\widehat{\pi}(t\mid S_n)\dt t=\Delta^{}+o_p(1)\neq \text{Var}\{\sqrt{n}(\overline{\theta}_n-\theta_\star)\}=\Delta W_\star \Delta^{\top}.
$$Consequently, the BSL posterior does not deliver asymptotically valid uncertainty quantification for $\theta_\star$ in misspecified models. 	{Moreover, in Supplementary Appendix C 
	we show that $\Delta$ directly depends on the level of model misspecification, via $\{b(\theta_\star)-b_{\bullet}\}$, and in general satisfies $\Delta\ne W_\star$ when $b(\theta_\star)\neq b_{\bullet}$.}
\end{remark}
 
\begin{remark}
{\color{black}The asymptotic covariance matrix of the BSL posterior mean, $\Delta W_\star \Delta^\top$, has the same structure as the sandwich covariance matrix for the exact Bayesian posterior mean: the ``bread'' of the covariance is the inverse Hessian of the synthetic likelihood, $\Delta$, and the ``meat'' is the variance of the score equations, denoted by $W_\star$. Since the synthetic likelihood $g_n(S_n\mid\theta)$ depends on the $\theta$-dependent mean $b(\theta)$ and inverse covariance $\Sigma^{-1}_n(\theta)$, the inverse Hessian in the BSL case, $\Delta$, depends on both the first and second derivatives of $b(\theta)$, and $\Sigma^{-1}_n(\theta)$, which ensures that this covariance matrix has a complicated form. We refer the interested reader to Supplementary Appendix C  for further discussion.} 
\end{remark} 
 

\section{Robustifying the synthetic likelihood}\label{sec:adjust}
The  asymptotic mixed Gaussianity that can result from applying Bayesian synthetic likelihood methods in misspecified models can produce inaccurate inferences, and irrelevant conclusions. Hence, there is a strong sense in which we should attempt to guard against this behavior when applying these methods. In this section, we compare different approaches for ameliorating the performance of Bayesian synthetic likelihood in misspecified models. 
\subsection{Tempering the synthetic likelihood}
To obtain robustness in misspecified models several authors, including, \cite{bhattacharya2019bayesian}, \cite{grunwald2017inconsistency}, \cite{bissiri2016general}, and \cite{miller2015}, have proposed to temper the likelihood used in Bayesian inference. {\color{black}It is therefore tempting to apply this strategy to correct the behavior of synthetic likelihood under model misspecification.

{\color{black}For $\alpha\ge0$ a positive constant, the standard approach to tempering would be to use a powered version of the assumed model density within an MCMC scheme in order to generate draws from the tempered posterior. Using likelihood tempering in the case of synthetic likelihood would then lead us to use the density $N\{S_n;\widehat{b}_n(\theta),\widehat{\Sigma}_n(\theta)\}^\alpha$ within the corresponding MCMC scheme. The results of \cite{bhattacharya2019bayesian} suggest that, in the case of a genuine likelihood, so long as $\alpha\in(0,1)$, the tempered likelihood can still display posterior concentration. }
	
 Following arguments in \cite{andrieu2009pseudo}, as well as those given in \cite{price2018bayesian}, using $N\{S_n;\widehat{b}_n(\theta),\widehat{\Sigma}_n(\theta)\}^\alpha$ within the MCMC scheme results in draws from the target posterior 
\begin{align*}
		\widehat{\pi}_\alpha(\theta\mid S_n)&\propto{{}{\widehat{g}^{\alpha}_n(S_n\mid\theta) \pi(\theta)}}\\
\widehat{g}^{\alpha}_n(S_n\mid\theta) & = \int N\{S_n;\widehat{b}_n(\theta),\widehat{\Sigma}_n(\theta)\}^{\alpha} \prod_{i=1}^m \dt P^{(n)}_\theta\{S_n(\z^i)\}\,\dt S_n(\z^1)\,\dots\, \dt S_n(\z^m).
\end{align*}
The posterior $\widehat{\pi}_\alpha(\theta\mid S_n)$ does not resemble a tempered posterior, but instead resembles a posterior based on an integrated likelihood.}


We now return to the running example and examine the behavior of the tempered posterior $\widehat{\pi}_\alpha(\theta\mid S_n)$. Similar to the posterior $\widehat{\pi}(\theta\mid S_n)$, since we can compute the mean and variance of the summaries exactly, it is possible to compute an exact version of the tempered posterior. We apply the tempered version of the exact posterior using a fixed tempering schedule with $\alpha=1/2$ for each value of $n$. Following the introductory example, we plot the tempered posterior for $n=100,500,1000$ and compare the results to those obtained in Figure \ref{fig:ma2_theta}.\footnote{The results displayed in Figure \ref{fig:ma1_temp} are not overly sensitive to the choice of $\alpha$.}

Figure \ref{fig:ma1_temp} demonstrates that the tempered posterior displays similar behavior to the exact posterior in Figure \ref{fig:ma2_theta}, and does not lead to any meaningful increase in posterior mass near $\theta=0$, the point under which $\|b(\theta)-b_{\bullet}\|$ is smallest. This result is perhaps unsurprising considering that the synthetic likelihood is Gaussian, and so tempering only changes the scaling of the posterior. 

\begin{figure}[h!]
	\centering
	\setlength\figureheight{3.5cm} 
\setlength\figurewidth{3.5cm} 
	\include{fig1_MA1_R1_Tempered} 
	\vspace{-2cm}
	
	\caption{{Tempered Bayesian synthetic likelihood posteriors for $\theta$ in the running example.}}
	\label{fig:ma1_temp}
\end{figure}

{\color{black}We now formally demonstrate that the tempered posterior $\widehat{\pi}_\alpha(\theta\mid S_n)$ produces qualitatively similar behavior to $\widehat{\pi}(\theta\mid S_n)$ when the results of Theorem \ref{thm:one} are valid. To state this result recall the local parameter $t={\sqrt{n}}(\theta-\theta_n)$, let $\widehat{\pi}_\alpha(t\mid S_n)=\widehat{\pi}_\alpha(\theta_n+t/\sqrt{n}\mid S_n)/\sqrt{n}^{d_\theta}$ be the fractional posterior for $t$, and let $\pi_\alpha(t)$ be a density function that satisfies $\pi_\alpha(t)\propto N\{t;0,\Delta\}^\alpha$.
\begin{corollary}[Fractional Posteriors]\label{corr:frac} If Assumptions \ref{ass:three}-\ref{ass:five} are satisfied, then for any finite $\gamma>0$, and any fixed  $\alpha\in[0,1]$, 
	$
	\left|\int_{\|t\|\le\gamma}\widehat{\pi}(t\mid S_n)\dt t-\int_{\|t\|\le\gamma}\pi_\alpha(t)\dt t\right|=O_p(1/m)
	$ for $m\rightarrow\infty$ as $n\rightarrow\infty$.
\end{corollary}
}

\subsection{Robust synthetic likelihood}\label{sec:rbsl}

The multi-modality observed in the running example exists because  $\widehat{\pi}(\theta\mid S_n)$ assigns high probability mass to values of $\theta$ that ensure $\|\widehat{\Sigma}_n(\theta)^{-1/2}\{\widehat{b}_n(\theta)-S_n\}\|$ is small. Measuring differences between summary statistics using this relative distance, rather than an absolute distance like $\|\widehat{b}_n(\theta)-S_n\|$, means that there can exist values of $\theta$ such that $\|\widehat{b}_n(\theta)-S_n\|$ is large, while $\|\widehat{\Sigma}_n(\theta)^{-1/2}\{\widehat{b}_n(\theta)-S_n\}\|$ is small.
With the above realization, there are several approaches for correcting this behavior. For brevity, we focus on two, leaving a detailed comparison and discussion on alternative approaches for future research. 
\subsubsection{Robust Bayesian Synthetic Likelihood}

The first approach we consider is the robust Bayesian synthetic likelihood  approach presented in \cite{frazier2019robust}.\footnote{For simplicity, we only focus on the variance adjustment approach detailed in \cite{frazier2019robust}.} This approach accounts for model misspecification by altering the covariance matrix used in the synthetic likelihood to ensure that the magnitude of $\|\widehat{b}_n(\theta)-S_n\|$ is properly taken into account.  For $\Gamma=(\gamma_1,\dots,\gamma_{d})'$ denoting a $d$-dimensional random vector with support $\mathcal{G}$, define the regularized covariance matrix
$
\widehat{\Sigma}_n(\theta,\Gamma)=\widehat{\Sigma}_n(\theta)+\widehat{\Sigma}^{1/2}_n(\theta)\text{diag}\{\gamma_1,\dots,\gamma_{d}\}\widehat{\Sigma}^{1/2}_n(\theta). 
$ Let 
$$
\widehat{g}_n(S_n\mid\theta,\Gamma)=\int N\{S_n;\widehat{b}_n(\theta),\widehat{\Sigma}_n(\theta,\Gamma)\}\prod_{i=1}^{m}\dt P^{(n)}_\theta\left\{ S_n(z^{i}_{}) \right\}\dt  S_n(z_{}^{1}) \dots\dt S_n(z_{}^{m})
$$
denote the synthetic likelihood based on $\widehat{\Sigma}_n(\theta,\Gamma)$.  

The parameters $\Gamma$ allow the variance of the synthetic likelihood to increase so that the weighted norm $\|\widehat{\Sigma}_n(\theta,\Gamma)^{-1/2}\{\widehat{b}_n(\theta)-S_n\}\|$ can be made small even if there is no value in $\Theta$ under which $\|\widehat{b}_n(\theta)-S_n\|$ is small. For $\pi(\Gamma)$ denoting the prior density of $\Gamma$, \cite{frazier2019robust} use independent exponential priors, the joint posterior is
$
\widehat{\pi}(\theta,\Gamma\mid S_n)\propto{\pi(\theta)\pi(\Gamma)\widehat{g}_n(S_n\mid\theta,\Gamma)}
,
$
and Markov chain Monte Carlo methods can be used to sample from $\widehat{\pi}(\theta,\Gamma\mid S_n)$.

We now illustrate the behavior of the posterior $	\widehat{\pi}(\theta,\Gamma\mid S_n)$ under different levels of model misspecification in the running example. Following the analysis in Section \ref{sec:maexam}, we consider three sample sizes $n=100,500,1000$. The posterior is sampled using the robust option in the \texttt{BSL} package (\citealp{an2019bsl}) under the default prior choice for $\Gamma$.\footnote{We start the sampler at $\theta=0$ and retain all resulting draws. In addition, we run the sampler for 50,000 iterations and use 10 synthetic datasets for each replication. These choices are fixed across the different sample size and misspecification combinations. The acceptance rates for the resulting procedure are reasonable, and between 20\% and 60\% across all combinations.}  

We plot the robust posteriors in Figure \ref{fig:ma1_reg}. The results demonstrate that the robust posteriors for $\theta$ are Gaussian and concentrating around $\theta=0$, with the robust posterior seemingly being insensitive to the level of model misspecification. This behavior is due to the regularization of the covariance matrix, which ensures the criterion is globally concave and achieves its maximum at $\theta=0$. {\color{black} The results in Figure \ref{fig:ma1_reg} provide convincing evidence that R-BSL yields reliable posteriors in a much broader set of circumstances than originally investigated in \cite{frazier2019robust}, which considered the analysis of the R-BSL posterior in the MA(1) model but only considered a single misspecified DGP with sample size $n=100$.}

{\color{black}
	While \cite{frazier2019robust} prove posterior concentration of $\widehat{\pi}(\theta,\Gamma\mid S_n)$ in the case of \textit{correctly specified models}, as with the posterior $\widehat{\pi}(\theta\mid S_n)$, these arguments do not extend to the case of misspecified models.  Determining the theoretical behavior of $\widehat{\pi}(\theta,\Gamma\mid S_n)$ is significantly complicated by the introduction of $\Gamma$ and the behavior of these components when the model is misspecified. While the authors have observed reliable behavior for $\widehat{\pi}(\theta,\Gamma\mid S_n)$ across a multitude of examples, formal results on the asymptotic behavior of $\widehat{\pi}(\theta,\Gamma\mid S_n)$ would require specific conditions on the prior  $\pi(\Gamma)$, and the construction of novel arguments to deduce the asymptotic results. Given these complications, we leave a comprehensive study on the behavior of  $\widehat{\pi}(\theta,\Gamma\mid S_n)$ for future work. 
}

\begin{figure}[h!]
	\centering
	\setlength\figureheight{3.5cm} 
	\setlength\figurewidth{3.5cm} 
	\include{fig1_rbsl_MA1} 
\vspace{-2cm}
	\caption{{r-BSL Posteriors for $\theta$ in the misspecified MA(1) model across six different levels of model misspecification. The solid line corresponds to $n=100$, the dashed line to $n=500$ and the dotted line to $n=1000$.}}%
	
	\label{fig:ma1_reg}
\end{figure}

\subsubsection{A Robust Adjustment Approach}\label{sec:rob_adjust}
While the robust synthetic likelihood approach delivers reliable inference even in highly-misspecified models, it requires conducting posterior inference over $d_\theta+d$ (where $d\ge d_\theta)$ elements, which can become cumbersome in cases where either $\theta$ or $S_n$ is high-dimensional. However, the key insight of \cite{frazier2019robust} in regards to misspecification is that it can be handled by sufficiently altering the structure of the synthetic likelihood.

{An alternative approach to deal with model misspecification in the case of high-dimensional summaries, or parameters, is to replace the  covariance matrix $\widehat{\Sigma}_n(\theta)$ in $g_n(S_n\mid\theta)$ with a naive but fixed version $A_n$. Replacing $\widehat{\Sigma}_n(\theta)$ by the fixed matrix $A_n$ means that $\log g_n(S_n\mid\theta)$ is roughly a weighted quadratic form based on a fixed covariance matrix, and thus should be well-behaved. As the result of Theorem \ref{thm:two} suggests, if this naive posterior is indeed uni-modal, then it will be approximately Gaussian in large samples, but with a covariance matrix that depends on the choice of $A_n$. }

{\color{black}An unintended consequence of replacing $\widehat{\Sigma}_n(\theta)$ in $g_n(S_n\mid\theta)$ by the naive covariance matrix $A_n$ is that the resulting posterior will concentrate onto values of $\theta$ under which $\|A_n^{-1/2}\{b(\theta)-S_n\}\|_{}$ is small. Therefore, such a procedure ensures that the specific choice of $A_n$ influences the resulting pseudo-true value onto which the posterior will concentrate. To ensure that the pseudo-true value onto which the posterior concentrates remains meaningful, we suggest setting $A_n= I_{d}/n$, so that the naive posterior asymptotically concentrates onto the minimizer of $\|b(\theta)-b_{\bullet}\|$. This choice ensures that the resulting pseudo-true value remains a meaningful quantity in approximate Bayesian inference.\footnote{Under the choice of $A_n=I_d/n$, the pseudo-true value onto which the posterior would concentrate would be $\theta_{_\bullet}:=\arg\min_{\theta\in\Theta}\|b(\theta)-b_{\bullet}\|$, which can be interpreted as the value of $\theta\in\Theta$ that yields the closest match to the observed summaries in the Euclidean norm, and is the same value onto which the ABC posterior will concentrate if $\|\cdot\|$ were used in ABC.}}

{\color{black} While the naive posterior will concentrate onto a meaningful pseudo-true value,  the coverage of the resulting posterior will not have the correct level (see Remark \ref{rem:cover}). However, we can adjust the posterior variance to ensure it attains the correct level of frequentist coverage. In particular, we can follow a similar idea to the adjustment procedure of \cite{frazier2019bayesian} and adjust the posterior draws using the following algorithm.}

\noindent1.  Take $\widehat{\Sigma}_n(\theta)=A_n$, for all $\theta\in\Theta$, as the covariance matrix in the synthetic likelihood and obtain the corresponding naive posterior mean, $\bar{\theta}_n$, and posterior covariance, $\widehat{\Delta}_n$. 

\noindent2. For $\theta^j$, $j=1,\dots,N$, a sample from the  naive posterior, adjust $\theta^j$ according to 
$
\widetilde{\theta}^{j}=\bar{\theta}_n+\widehat{\Delta}_n\widehat{W}_n^{1/2}\widehat{\Delta}_n^{-1/2}(\theta^j-\bar{\theta}_n),
$ where $\widehat{W}_n$ is any consistent estimator of the asymptotic variance $W_\star:= \lim_n\text{var}\{{\sqrt{n}}M_n(\theta_\star)\}$.

{\color{black}While the naive posterior in the first step ensures concentration onto values of $\theta$ under which $\|b(\theta)-b_{\bullet}\|$ is small, this posterior has unreliable uncertainty quantification, and so the second step adjusts the posterior variance. When the model is misspecified, the most reliable estimator of $ W_\star:=\lim_n\text{var}\{{\sqrt{n}}M_n(\theta_\star)\}$ is obtained using bootstrapping where we re-sample the summary statistics and recalculate the synthetic likelihood equations $M_n(\theta)$ at each bootstrapped summary statistic. In this way, we can interpret the above adjusted posterior as being similar to the ``BayesBag'' posterior (see  \citealp{huggins2019using}), but in the synthetic likelihood context. However, unlike BayesBag our approach \textit{does not} require re-running any posterior sampling mechanism, and only requires bootstrapping summary statistics and recalculating the synthetic likelihood score equations. Moreover, unlike BayesBag, the following result demonstrates that the adjusted posterior delivers asymptotically valid uncertainty quantification. 

Let $\theta_n:=\arg\min_{\theta\in\Theta}\|b(\theta)-S_n\|$, $\vartheta:=\sqrt{n}(\widetilde{\theta}-\theta_n)$, and  $\mathcal{T}_n=\{t:\vartheta={\sqrt{n}}(\widetilde{\theta}-\theta_n),\;\widetilde{\theta}\in\Theta\}$, where $\widetilde{\theta}$ denotes the transformed version of $\theta$ given in the second step  of the algorithm.
\begin{corollary}\label{corr:absl}
 Assume Assumptions \ref{ass:three}, \ref{ass:one}, and Assumptions \ref{ass:hess}-\ref{ass:norm} are satisfied, and that $\widehat\Delta_n$ and $\widehat W_n$ are consistent estimators for $\Delta$ and $W_\star$, respectively. For $m\rightarrow\infty $ as $n\rightarrow\infty$,
	$
	\int_{\mathcal{T}_n}\left|\widehat{\pi}(\vartheta\mid S_n)-N\{\vartheta;0,\Delta W_\star \Delta^\top\}\right|\dt \vartheta=o_p(1)
	$, and for $\widetilde{\theta}_n^{}:=\int_{\Theta}\widetilde{\theta} \cdot \widehat{\pi}(\widetilde{\theta}\mid S_n)\dt\widetilde{\theta}$, 
	$\sqrt{n}(\widetilde{\theta}_n-\theta_{_\bullet}) \Rightarrow N(0, \Delta^{}W_\star\Delta^{\top} )$.	
\end{corollary}

	\begin{remark}
	The adjusted BSL posterior concentrates onto the same limiting value as the ABC posterior under model misspecification. However, unlike the ABC posterior the adjusted BSL posterior displays Gaussian posterior concentration, and asymptotically correctly quantifies uncertainty about the pseudo-true value $\theta_{_\bullet}:=\arg\min_{\theta\in\Theta}\|b(\theta)-b_{\bullet}\|$. Consequently, if the user is faced with a misspecified model in approximate Bayesian inference, and correct uncertainty quantification is desirable, then we recommend the use of robust BSL methods over ABC methods. 
\end{remark}

\begin{remark}
 The above adjustment procedure is related to, but distinct from, the procedure discussed in Section 4 of \cite{frazier2019bayesian}. In correctly specified models, \cite{frazier2019bayesian} use a similar approach to the second step of the above algorithm to correct the posterior covariance in cases where a computational convenient covariance matrix is initially used in BSL. We advise against using the approach outlined in \cite{frazier2019bayesian} when the model is misspecified, since the resulting posterior will concentrate onto a point that is determined by the choice of computationally convenient covariance matrix. Due to space limitations, we forgo a detailed comparison with the adjustment approach of \cite{frazier2019bayesian} to Appendix D.2.
\end{remark}
}

We now examine the behavior of the adjusted posterior in the running example through a repeated sampling experiment. 
In particular, we generate five hundred datasets from the model in \eqref{trueDGP}, where the parameter values are $\omega = -0.15$, $\rho = 0.90$ and $\sigma_v = 0.40$, and with $n=100,500,1000$ observations. The adjusted posterior is obtained by calculating the exact naive  posterior, setting $A_n$ to be the identity covariance matrix, and then adjusting 10,000 samples from the naive posterior. The variance term $\widehat{W}_n$ used in the adjustment is estimated via the block bootstrap with a block size of 5 and using 1,000 bootstrap samples. 

In Table \ref{tab:one} we record the posterior mean (multiplied by $10^3$), variance and Monte Carlo coverage for both the adjusted and naive approaches, and across each of the three samples sizes. The results demonstrate that both approaches give precise estimators for the location of the pseudo-true value, $\theta=0$, with the adjusted approach having a smaller posterior variance across all sample sizes. In terms of Monte Carlo coverage, both procedures display over-coverage for the unknown pseudo-true value. Therefore, given the tighter posteriors for the adjusted approach, and similar posterior means, we conclude that the adjusted approach is more accurate than the naive approach. 

{	\begin{table}[h!]
		\centering
		\centering
		{	\begin{tabular}{lrrrrrr}
				& \multicolumn{2}{c}{\underline{$n$=100}} &   \multicolumn{2}{c}{\underline{$n$=500}} &  \multicolumn{2}{c}{\underline{$n$=1,000}}   \\
				& \multicolumn{1}{l}{a-BSL} & \multicolumn{1}{l}{n-BSL} & \multicolumn{1}{l}{a-BSL} & \multicolumn{1}{l}{n-BSL} &\multicolumn{1}{l}{a-BSL} & \multicolumn{1}{l}{n-BSL}        \\\hline
				Mean  & -0.5691  & -0.1955 & -0.9916   & 0.0863 & -0.2654   & 0.0191       \\
				Variance & 0.0966  &  0.3322 & 0.0197  &  0.0668 & 0.0107  &  0.0334      \\
				Coverage   & 100\%  & 100\% & 100\% & 100\% & 100\% & 100\%        \\
			\end{tabular}%
		
		}\caption{{Summary measures of posterior accuracy, calculated as averages across the replications. Mean - posterior mean multiplied by $10^{3}$, Variance - posterior variance, Coverage - Monte Carlo coverage. n-BSL refers to naive BSL, and a-BSL refers to adjusted BSL.}}	\label{tab:one}%
	
	\end{table}
}		

In Supplementary Appendix F 
we apply the robust procedures discussed in Section \ref{sec:rbsl} to analyze a simple model of financial returns. Both methods behave similarly and suggest the presence of significant model misspecification.

\section{Discussion}\label{sec:discuss}
Over the last decade, approximate Bayesian methods like synthetic likelihood have gained acceptance in the statistical community for their ability to produce meaningful inferences in complex models. The ease with which these methods can be applied has also led to their use in diverse fields of research; see \cite{sisson2018handbook} for examples. 

While the initial impetus for these methods was one of practicality, recent research has begun to focus on the theoretical behavior of these methods. In the context of BSL, \cite{frazier2019bayesian} demonstrate that BSL posteriors are well-behaved in large samples, and can deliver inferences that are just as reliable as those obtained by ABC, assuming the model is correctly specified.

However, the important message delivered in this paper is that if the assumed model is misspecified, then synthetic likelihood-based inference can be unreliable, and the resulting posterior can be significantly different to those obtained from other approximate Bayesian methods. Critically, the type of behavior exhibited by the Bayesian synthetic likelihood posterior is intimately related to the form and degree of model misspecification, which cannot be reliably measured without first conducting some form of inference.

While our results have only focused on the most commonly applied variant of the synthetic likelihood posterior, we conjecture that recently proposed variations, such as the semiparametric approach of \cite{an2020robust} or the whitening approach of \cite{priddle2019efficient}, will exhibit similar behavior. {The semiparametric synthetic likelihood approach of \cite{an2020robust} allows the user to remove the implicit Gaussian assumption for the summary statistic likelihood and estimate the likelihood of the summaries using kernel smoothing methods. However, this more general approach will still suffer from the same issues as the original BSL posterior under model misspecification. Estimating the distribution of the summary statistics  does not change the fact that when the model is misspecified the observed summary statistic cannot be matched by the simulated statistics. Consequently, changing the class of approximations used for the synthetic likelihood from the Gaussian to a more flexible class \textit{does not address} the underlying issue of model misspecification, and so the resulting posterior will behave similarly to those analyzed herein. 
}


\noindent\textbf{Acknowledgments.}  David Frazier was supported by the Australian Research Council's Discovery Early Career Researcher Award funding scheme (DE200101070). David Nott was supported by a Singapore Ministry of Education Academic Research Fund Tier 1 grant and is affiliated with the Operations Research and Analytics Research cluster at the National University of Singapore.    Christopher Drovandi was supported by the Australian Research Council Future Fellowships Scheme (FT210100260).  We are grateful to the Associate Editor and two referees for their very helpful comments and suggestions that have significantly improved the paper. All remaining errors are our own.

{
\bibliographystyle{chicago}
\bibliography{refs_mispecCDF}
}

\appendix

\section{Proofs of main results}\label{appendix:one}

\begin{proof}[Proof of Theorem \ref{thm:one}]
	Define  $C_\Delta= 1/\{{(2\pi)}^{d_\theta}|\Delta|^{}\}^{1/2}$ and let $\Pi_\star(\gamma)=C_\Delta\int_{\|t\|\le\gamma}\exp\left(-t^\intercal \Delta^{-1}t/2\right)\d t$. By the triangle inequality, 
	\begin{flalign*}
		&\left|\int_{\|t\|\le \gamma} \widehat{\pi}(t\mid S_n)\d t-C_\Delta\int_{\|t\|\le\gamma}\exp\left(-t^\intercal \Delta^{-1}t/2\right)\d t\right|=\left|\int_{\|t\|\le \gamma} \widehat{\pi}(t\mid S_n)\d t-\Pi_\star(\gamma)\right|\nonumber\\&\leq\left|\int_{\|t\|\le\gamma}\left\{\widehat{\pi}(t\mid S_n)-\pi(t\mid S_n)\right\}\d t\right|+\left|\int_{\|t\|\le \gamma}{\pi}(t\mid S_n)\d t-\Pi_\star(\gamma)\right|.
	\end{flalign*}
	By Lemma \ref{lem:orders} (see Section \ref{app:lemmas}),  $\int_{\|t\|\le\gamma}\left|\widehat{\pi}(t\mid S_n)-\pi(t\mid S_n)\right|\d t=O_p(1/m)$ for $m\rightarrow\infty$ as $n\rightarrow\infty$; and so the result follows  if $J_n=|\int_{\|t\|\le \gamma}{\pi}(t\mid S_n)\d t-\Pi_\star(\gamma)|=o_p(1).$
	
	Let $Q_n(\theta)=-\|\Sigma_{n}^{-1/2}(\theta)\{b_n(\theta)-S_n\}\|^2/2$, and rewrite the exact posterior $\pi(\theta\mid S_n)$ as
	\begin{flalign}
		\pi(\theta\mid S_n)&= \pi(\theta_n\mid S_n)\left[{|n\Sigma_n^{}(\theta)|}/{|n\Sigma_n^{}(\theta_n)|}\right]^{-1/2}\exp\left\{Q_n(\theta)-Q_n(\theta_n)\right\}\pi(\theta)/\pi(\theta_n),\label{eq:Clev_post}
	\end{flalign}for $\theta_n$ as in Lemma \ref{lem:cons}. 
	For any $\gamma>0$, let $\gamma_n=\gamma/{{\sqrt{n}}}$, with $\gamma_n=o(1)$, and define $\mathcal{N}_\gamma=\{\theta\in\Theta:\|\theta-\theta_n\|\le\gamma_n\}$. Using \eqref{eq:Clev_post},
	decompose the posterior probability over $\mathcal{N}_\gamma$ as 
	\begin{flalign*}
		\int_{\mathcal{N}_\gamma}\pi(\theta\mid S_n)\d\theta&=\pi(\theta_n\mid S_n)\int_{\mathcal{N}_\gamma}\exp\left\{Q_n(\theta)-Q_n(\theta_n)\right\}\d\theta\\&+\pi(\theta_n\mid S_n)\int_{\mathcal{N}_\gamma}\exp\left\{Q_n(\theta)-Q_n(\theta_n)\right\}\left[\frac{\pi(\theta)}{\pi(\theta_n)}-1\right]\d\theta\\&+\pi(\theta_n\mid S_n)\int_{\mathcal{N}_\gamma}\exp\left\{Q_n(\theta)-Q_n(\theta_n)\right\}\left[\frac{|n\Sigma_n(\theta)|^{-1/2}}{|n\Sigma_n(\theta_n)|^{-1/2}}-1\right]\d\theta\\&+\pi(\theta_n\mid S_n)\int_{\mathcal{N}_\gamma}\exp\left\{Q_n(\theta)-Q_n(\theta_n)\right\}\left[\frac{|n\Sigma_n(\theta)|^{-1/2}}{|n\Sigma_n(\theta_n)|^{-1/2}}-1\right]^{}\left[\frac{\pi(\theta)}{\pi(\theta_n)}-1\right]\d\theta\\&=\pi(\theta_n\mid S_n)\int_{\mathcal{N}_\gamma}\exp\left\{Q_n(\theta)-Q_n(\theta_n)\right\}\d\theta+\pi(\theta_n\mid S_n)(C_{1n}+C_{2n}+C_{3n})
	\end{flalign*}
	
	From Assumption \ref{ass:four} and Lemma \ref{lem:cons}, $|\pi(\theta_n)-\pi(\theta_\star)|=o_p(1)$, with $\pi(\theta_\star)>0$. For notational brevity, let $\ell_n(\theta)=|n\Sigma_n(\theta)|^{-1/2}$ and $\ell(\theta)=|\Sigma(\theta)|^{-1/2}$. Upper bound $C_{jn}$, $j=1,2,3$, as 
	\begin{flalign*}
		C_{1n}&\le C\left\{\sup_{\|\theta-\theta_n\|\leq\gamma_n}|\pi(\theta)-\pi(\theta_n)|\right\} \int_{\mathcal{N}_\gamma}\exp\left\{Q_n(\theta)-Q_n(\theta_n)\right\}\d\theta \\
		C_{2n}&\le C\left\{\sup_{\|\theta-\theta_n\|\leq\gamma_n}|\ell_n(\theta)-\ell(\theta)|\right\} \int_{\mathcal{N}_\gamma}\exp\left\{Q_n(\theta)-Q_n(\theta_n)\right\}\d\theta\\&+C\left\{\sup_{\|\theta-\theta_n\|\leq\gamma_n}|\ell(\theta)-\ell(\theta_n)|\right\} \int_{\mathcal{N}_\gamma}\exp\left\{Q_n(\theta)-Q_n(\theta_n)\right\}\d\theta\\
		C_{3n}&\le C\left[\sup_{\|\theta-\theta_n\|\leq\gamma_n}|\pi(\theta)-\pi(\theta_n)|\left\{|\ell_n(\theta)-\ell(\theta)|+|\ell(\theta)-\ell(\theta_n)|\right\}\right] \int_{\mathcal{N}_\gamma}\exp\left\{Q_n(\theta)-Q_n(\theta_n)\right\}\d\theta 
	\end{flalign*}for a given constant $C$ that changes line-by-line. 
	
	Since $\pi(\theta),\;\ell(\theta)$ are continuous, they are bounded over $\mathcal{N}_\gamma$ and, by Assumption \ref{ass:two}, $\ell_n(\theta)$ is bounded over $\mathcal{N}_\gamma$ for $n$ large enough. Thus, 
	\begin{equation}\label{eq:1}
		\sup_{\theta\in\mathcal{N}_\gamma}|\ell_n(\theta)-\ell(\theta)|=o_p(1),\quad 	\sup_{\theta\in\mathcal{{N}}_\gamma}|\pi(\theta)-\pi(\theta_n)|=o_p(1).
	\end{equation}
	By Assumption \ref{ass:three}, for each $j=1,\dots,d_\theta$, the matrix $\partial\Sigma(\theta)/\partial\theta_j$  is continuous so that, over $\mathcal{N}_\gamma$, 
	\begin{equation}\label{eq:2}
		\sup_{\|\theta-\theta_n\|\le \gamma_n}|\ell(\theta)-\ell(\theta_n)|\leq \sup_{\|\theta-\theta_n\|\le \gamma_n}\|\partial \ell(\theta)/\partial\theta\|\gamma_n=o(1).
	\end{equation}
	Therefore, equations \eqref{eq:1}-\eqref{eq:2} imply that $C_{jn}=o_{p}\left[\int_{\mathcal{N}_\gamma}\exp\left\{Q_n(\theta)-Q_n(\theta_n)\right\}\dt\theta\right]$, for $j=1,2,3$,  and we have
	\begin{equation}
		\int_{\mathcal{N}_\gamma}\pi(\theta\mid S_n)\d\theta=\pi(\theta_n\mid S_n)\{1+o_p(1)\}\int_{\mathcal{N}_\gamma}\exp\left\{Q_n(\theta)-Q_n(\theta_n)\right\}\d\theta\label{eq:approx1}.
	\end{equation}
	
	Now, consider the change of variables $\theta\mapsto t={{\sqrt{n}}}(\theta-\theta_n)$, and the posterior $\pi(t\mid S_ n)=\pi(\theta_n+t/{{\sqrt{n}}}\mid S_n)/{n^{{d_\theta}/2}}$. Noting that $\mathcal{N}_\gamma$ can be written as $\{t:\|t\|\le\gamma\}$, we have
	\begin{flalign}
		\int_{\mathcal{N}_\gamma}\pi(\theta\mid S_n)\d \theta&=\int_{\|t\|\le\gamma}\pi(t\mid S_n)\dt t\nonumber\\&=\frac{\pi(\theta_n\mid S_n)}{{n^{{d_\theta}/2}}}\{1+o_p(1)\}\int_{\|t\|\le\gamma}\exp\left\{Q_n(\theta_n+t/{{\sqrt{n}}})-Q_n(\theta_n)\right\}\dt t\nonumber\\&\le C_\Delta\{1+o_p(1)\}\int_{\|t\|\le\gamma}\exp\left\{Q_n(\theta_n+t/{{\sqrt{n}}})-Q_n(\theta_n)\right\}\dt t\label{eq:xxx};
	\end{flalign} the second equality follows from equation \eqref{eq:approx1} and the change of variables, and the last equation follows by Lemma \ref{bound} (with probability converging to one)  
	Applying equation \eqref{eq:xxx}, we see that, up to an $o_p(1)$ term,
	\begin{flalign*}
		J_{n}&\lesssim \left|\int_{\|t\|\le\gamma}\left[\exp\left\{Q_n(\theta_n+{t}/{{{\sqrt{n}}}})-Q_n(\theta_n)\right\}-\exp\left(-t^{\top}\Delta^{-1}t/2\right)\right]\d t\right|;
	\end{flalign*} and the result follows if the right hand side of the above converges to zero in probability.

	For $Z_n={{\sqrt{n}}}(S_n-b_{\bullet})$, let $\Omega_n=\{Z_n:\|Z_n\|\leq M_n/2\}$, for $M_n\rightarrow\infty$, with $M_n=o({{\sqrt{n}}})$,  and  $P^{(n)}_{\bullet}(\Omega_n)\rightarrow1$ by Assumption \ref{ass:one}. For $\|t\|\le\gamma$, on the set $\Omega_n$, from the expansion in Lemma \ref{lemma:freq},
	\begin{flalign*}
		Q_n(\theta_n+t/{{\sqrt{n}}})-Q_n(\theta_n)&=\frac{1}{2}t^{\top}H(\theta_n)t+O(\|t\|^2\{1+\|Z_n\|\}/{{\sqrt{n}}})
		=\frac{1}{2}t^{\top}H(\theta_n)t+o(1),
	\end{flalign*} where the second equality follows since $\|Z_n/{{\sqrt{n}}}\|\le M_n/{{\sqrt{n}}}=o(1)$. Over $\{t:\|t\|\le\gamma\}\cap\Omega_n$, rewrite the above as
	\begin{flalign}
		Q_n(\theta_n+t/{{\sqrt{n}}})-Q_n(\theta_n)&=-\frac{1}{2}t^{\top}\left(I+V_n\right)\Delta^{-1}t+o(1),\label{eq1:rep1}
	\end{flalign}where $V_n=
	\left[-H(\theta_n)-\Delta^{-1}\right]\Delta$. 
	
	By Assumption \ref{ass:hess},
	$
	\|V_n\|\le K\|\Delta\|\|\theta_n-\theta_{\star}\|,
	$ for some $K>0$.  
	Define $A_n=K \Delta \|\theta_n-\theta_{\star}\|$, and conclude that $A_n$ is positive semi-definite with maximal eigenvalue
	$\lambda_{\text{max}}(A_n)=K\|\theta_n-\theta_\star\|\lambda_{\text{max}}(\Delta)\ge0$. Therefore, over $\|t\|\le\gamma$, 
	\begin{equation*}
		-t^{\top}A_nt\le t^{\top}V_nt\le t^{\top}A_nt,
	\end{equation*}and applying the above into \eqref{eq1:rep1} yields 
	\begin{flalign*}
		-t^{\top}\Delta^{-1}t/2-t^{\top}A_n\Delta^{-1}t/2
		\leq Q_n(\theta_n+t/{{\sqrt{n}}})-Q_n(\theta_n)&\leq -t^{\top}\Delta^{-1}t/2+t^{\top}A_n\Delta^{-1}t/2.
	\end{flalign*}
	However, using the definition of $A_n$, over $\|t\|\le\gamma$ the above equation can be simplified as
	\begin{flalign}\label{eq:bound}
		-t^{\top}\Delta^{-1}t/2-C\|\theta_n-\theta_\star\|
		\leq Q_n(\theta_n+t/{{\sqrt{n}}})-Q_n(\theta_n)&\leq -t^{\top}\Delta^{-1}t/2+C\|\theta_n-\theta_\star\|.
	\end{flalign}
	
	Using equations \eqref{eq:xxx}-\eqref{eq:bound}, for $n$ large enough, we can bound the posterior over $\mathcal{N}_{\gamma}$ above and below as
	\begin{flalign*}
		&\frac{\pi(\theta_n\mid S_n)}{{n^{{d_\theta}/2}}}\int_{\mathcal{N}_{\gamma}}\exp\left\{Q_n(\theta_n+t/{{\sqrt{n}}})-Q_n(\theta_n)\right\}\d t \\&\leq\frac{\pi(\theta_n\mid S_n)}{{n^{{d_\theta}/2}}} \exp(C\|\theta_n-\theta_\star\|)\int_{\mathcal{N}_\gamma}\exp\left(-t^{\top}\Delta^{-1}t/2\right)\d t\\&\geq \frac{\pi(\theta_n\mid S_n)}{{n^{{d_\theta}/2}}}\exp(-C\|\theta_n-\theta_\star\|)\int_{\mathcal{N}_\gamma}\exp\left(-t^{\top}\Delta^{-1}t/2\right)\d t.
	\end{flalign*}
	As $n\rightarrow\infty$, Lemma \ref{bound}, and the dominated convergence theorem allow us to conclude that
	$$
	\Pi_\star(\gamma)\{1+o_p(1)\}\le \frac{\pi(\theta_n\mid S_n)}{{n^{{d_\theta}/2}}}\int_{\mathcal{N}_{\gamma}}\exp\left\{Q_n(\theta_n+t/{{\sqrt{n}}})-Q_n(\theta_n)\right\}\d t\le \Pi_\star(\gamma)\{1+o_p(1)\}.
	$$It follows that $J_n\rightarrow0$ in probability.
\end{proof}

\begin{proof}[Proof of Theorem \ref{thm:two}] 
	By the triangle inequality
	\begin{flalign}
		\int_{\mathcal{T}_n} \|t\|^{}|\widehat{\pi}(t\mid S_n)-N\{t;0,\Delta\}|\dt t&\leq \int_{\mathcal{T}_n} \|t\|^{}|\pi(t\mid S_n)-N\{t;0,\Delta\}|\dt t\\&+\int_{\mathcal{T}_n} \|t\|^{}|\widehat{\pi}(t\mid S_n)-\pi(t\mid S_n)|\dt t\nonumber\\&= \mathcal{V}_{1n}+\mathcal{V}_{2n}\label{defns}.
	\end{flalign}
	We separately demonstrate that $\mathcal{V}_{1n}$ and $\mathcal{V}_{2n}$ are $o_p(1)$. 
	
	\medskip 
	
	\noindent\textbf{Part 1: $\mathcal{V}_{1n}=o_p(1)$.} For some $\delta_n$ such that $\delta_n=o(1)$ and ${{\sqrt{n}}}\delta_n\rightarrow\infty$,  split the region of integration $\mathcal{T}_n$ as follows: (i) $\mathcal{T}_{1n}=\{\|t\|\le \delta_n{{\sqrt{n}}}\}$, and (ii) $\mathcal{T}_{2n}=\{\|t\|\ge\delta_n{{\sqrt{n}}}\}$. We consider the integral over each region separately. 
	
	\medskip 
	
	\noindent\textbf{Region (i): $\mathcal{T}_{1n}=\{\|t\|\le \delta_n{{\sqrt{n}}}\}$.}  Rewrite the exact  posterior as	
	\begin{flalign*}
		\pi(\theta\mid S_n)=\frac{\pi(\theta_n\mid S_n)|n\Sigma_n(\theta_n)|^{1/2}}{\pi(\theta_n)}|n\Sigma_n(\theta)|^{-1/2}\exp\{Q_n(\theta)-Q_n(\theta_n)\}\pi(\theta).
	\end{flalign*}However,  $\|\theta_n-\theta_\star\|=o_p(1)$ (by Lemma \ref{lem:cons}), and under uniqueness of $\theta_\star$ ({Assumption \ref{ass:strong:three}})  Lemma \ref{bound} implies
	\begin{flalign}\label{eq:star1}
		\pi(\theta_n\mid S_n)/{n^{{d_\theta}/2}}=C_\Delta+o_p(1),
	\end{flalign}and with $C_\Delta=1/\{(2\pi)^{d_\theta}|\Delta|\}^{1/2}$ as defined in Theorem \ref{thm:one}. Over $\{\|t\|\le \delta_n{{\sqrt{n}}}\}$, equation \eqref{eq:star1} and similar arguments to those used in Theorem \ref{thm:one} to obtain equation \eqref{eq:xxx}
	yield
	\begin{flalign*}
		\int_{\|\theta-\theta_n\|\le\delta_n}\pi(\theta\mid S_n)\dt\theta&=\frac{\pi(\theta_n\mid S_n)}{{n^{{d_\theta}/2}}}\{1+o_p(1)\}\int_{\|t\|\le\delta_n{{\sqrt{n}}}}\pi(t\mid S_n)\dt t\nonumber\\&=C_\Delta\{1+o_p(1)\}\int_{\mathcal{T}_{1n}}\exp\{Q_n(\theta_n+t/{{\sqrt{n}}})-Q_n(\theta_n)\}\dt t,
	\end{flalign*} and, up to an $o_p(1)$ term,
	\begin{flalign}\label{eq:star4}
		&\int_{\mathcal{T}_{1n}} \|t\||\pi(t\mid S_n)-N\{t;0,\Delta\}|\dt t\noindent\\&=C_\Delta\int_{\mathcal{T}_{1n}}\|t\||\exp\{Q_n(\theta_n+{t}/{{{\sqrt{n}}}})-Q_n(\theta_n)\}-\exp(-t^\intercal\Delta^{-1}t/2)|\dt t.
	\end{flalign}Over $\mathcal{T}_{1n}$, arguments similar to those used in Theorem \ref{thm:one} to obtain equation \eqref{eq1:rep1} yield
	\begin{equation}
		Q_n(\theta_n+t/{{\sqrt{n}}})-Q_n(\theta_n)\leq -t^{\top}\left(I-A_n\right)\Delta^{-1} t/2+o_p(1)\label{eq:star3},
	\end{equation}
	for $A_n$ as defined in that proof.
	
	For some $h>0$, further split $\mathcal{T}_{1n}=\{\|t\|\le h\}\cup\{h\le \|t\|\le\delta_n{{\sqrt{n}}}\}$. Over the first set, $\|t\|$ can be dropped from the computation. Recalling that $A_n\rightarrow0$ as $n\rightarrow\infty$, over $\|t\|\le h$, equation \eqref{eq:star3} implies that
	$$
	Q_n(\theta_n+t/{{\sqrt{n}}})-Q_n(\theta_n)=-t^\intercal\Delta^{-1}t/2+o_p(1),
	$$and it follows that the integral in \eqref{eq:star4} is $o_p(1)$ over $\|t\|\le h$. 
	
	Over $\{h\le \|t\|\le \delta_n{{\sqrt{n}}}\}$, $
	\int_{h<\|t\|\le {{\sqrt{n}}} \delta_n}\|t\| N\{t;0,\Delta\}\dt t
	$ can be made arbitrarily small by taking $h$ large enough and $\delta_n$ small enough, so that, applying \eqref{eq:star3}, it suffices to show that, for any $\varepsilon>0$ there exists an $h$ and $\delta_n$ such that, for some $n$ large enough,  
	$$
	P^{(n)}_{\bullet}\left[\int_{h<\|t\|\le {{\sqrt{n}}} \delta_n}\|t\| \exp\{-t^{\top}\left(I-A_n\right)\Delta^{-1} t/2\}\dt t<\varepsilon\right]\ge 1-\varepsilon.
	$$
	
	However, for $h'$ large enough, and all $h>h'$, on the set $h<\|t\|\le \delta_n{{\sqrt{n}}}$, 
	$$
	\|t\| \exp(-t^\intercal \Delta^{-1}t/2)=O(1/h). 
	$$Therefore, for any $\varepsilon>0$, there is an $h$ large enough and a $\delta_n$ small enough such that
	$$
	\int_{h<\|t\|\le {{\sqrt{n}}} \delta_n}\|t\|\exp(-t^{\top}\Delta^{-1}t/2)\dt t<\varepsilon.
	$$Since $A_n\rightarrow 0$, we can conclude that for some $n$ large enough, with $P^{(n)}_{\bullet}$-probability at least $1-\varepsilon$,
	$$
	\int_{M<\|t\|\le {{\sqrt{n}}} \delta}\|t\| \exp\{-t^{\top}\left(I-A_n\right)\Delta^{-1}t/2\}\dt t<\varepsilon.
	$$
	
	\medskip 
	
	\noindent\textbf{Region (ii): $\mathcal{T}_{2n}=\{\|t\|\ge\delta_n{{\sqrt{n}}}\}$.} Again, 
	$
	\int_{\mathcal{T}_{2n}}\|t\|N\{t;0,\Delta\}\dt t 
	$ can be made arbitrarily small by taking $\delta_n {{\sqrt{n}}}$ large enough, and it remains to show that $\int_{\mathcal{T}_{2n}}\|t\|\pi(t\mid S_n)=o_p(1)$. 
	
	Applying, in-turn, the expression for the exact posterior in \eqref{eq:Clev_post}, Assumptions \ref{ass:two} and \ref{ass:four}, and Lemma \ref{bound}, 
	\begin{flalign*}
		\int_{\mathcal{T}_{2n}} \|t\|\pi(t\mid S_n)\dt t\le C \int_{\mathcal{T}_{2n}}\frac{\|t\|\pi(\theta_n+t/{{\sqrt{n}}})}{|n\Sigma_n(\theta_n+t/{{\sqrt{n}}})|^{1/2}}\exp\{Q_n(\theta_n+t/{{\sqrt{n}}})-Q_n(\theta_n)\}\dt t,
	\end{flalign*}for some $C>0$. Using the change of variables $t\mapsto\theta$, the integral on the right hand side becomes 
	\begin{flalign}\label{eq:terms1}
		C\left\{1+o_p(1)\right\}{n^{d_{\theta}/2}}\int_{\|\theta-\theta_\star\|>\delta}\|\theta-\theta_\star\|\pi(\theta)|n\Sigma_n(\theta)|^{-1/2}\exp\{Q_n(\theta)-Q_n(\theta_n)\}\d\theta,
	\end{flalign}where the $o_p(1)$ term follows from the triangle inequality and consistency of $\theta_n$ for $\theta_\star$ (Lemma \ref{lem:cons}). 
	For any $\delta>0$, and $Q(\theta)=-\{b(\theta)-b_{\bullet}\}^\intercal\Sigma(\theta)^{-1}\{b(\theta)-b_{\bullet}\}/2$, where we recall that $b(\theta)=\lim_n b_n(\theta)$, under Assumptions \ref{ass:three} and \ref{ass:one},
	\begin{flalign*}
		\sup_{\|\theta-\theta_\star\|>\delta}n^{-1}\left\{Q_n(\theta)-Q_n(\theta_n)\right\}&
		\leq \sup_{\|\theta-\theta_\star\|>\delta}\{Q(\theta)-Q(\theta_\star)\}+o_p(1).
	\end{flalign*}
	Further, continuity of $Q(\theta)$, and uniqueness of $\theta_\star$ imply that for any $\delta>0$ there exists some $\epsilon>0$ such that 
	$
	\sup_{\|\theta-\theta_\star\|>\delta}\{Q(\theta)-Q(\theta_\star)\}\le -\epsilon. 
	$ Therefore, for any $\delta>0$, 
	$$
	\lim_{n\rightarrow\infty} P^{(n)}_{\bullet}\left[\sup_{\|\theta-\theta_\star\|>\delta}\exp\{Q_n(\theta)-Q_n(\theta_n)\}\le \exp(-\epsilon n)\right]=1. 
	$$Applying the above  into equation \eqref{eq:terms1}, and dropping the $o_p(1)$ term, we can conclude
	\begin{flalign*}
		\int_{\|t\|>{{\sqrt{n}}}\delta} \|t\| \pi(t\mid S_n)\dt t&\leq C {n^{d_{\theta}/2}}\exp(-\epsilon n)\int_{\|\theta-\theta_\star\|>\delta}\|\theta-\theta_\star\||n\Sigma_n(\theta)|^{-1/2}\pi(\theta)\d\theta 
	\end{flalign*}with probability converging to one. Using Cauchy-Schwartz, upper bound the last term as 
	\begin{flalign*}
		&C {n^{d_{\theta}/2}}\exp(-\epsilon n)\int_{\|\theta-\theta_\star\|>\delta} {\|\theta-\theta_\star\||n\Sigma_n(\theta)|^{-1/2}\pi(\theta)}\d\theta \\&\leq C {n^{d_{\theta}/2}}\exp(-\epsilon n)\left[\int_{\Theta}\|\theta\|^{2}\pi(\theta)\d\theta\right]^{1/2}\left[\int_{\Theta}{|n\Sigma_n(\theta)|^{-1}\pi(\theta)}{}\d\theta\right]^{1/2}\lesssim {n^{d_{\theta}/2}}\exp(-\epsilon n),
	\end{flalign*}where the last inequality follows from compactness of $\Theta$, and Assumption \ref{ass:two}.
	
	Since $\mathcal{V}_{1n}$ is $o_p(1)$ across both $\mathcal{T}_{1n}$ and $\mathcal{T}_{2n}$, $\mathcal{V}_{1n}$ is $o_p(1)$ over $\mathcal{T}_n$.
	
	\medskip 
	
	\noindent\textbf{Part 2: $\mathcal{V}_{2n}=o_p(1)$.} From equation \eqref{eq:new1} in the proof of Lemma \ref{lem:orders}, and the fact that $\int_\Theta{g}_n(S_n\mid\theta)\pi(\theta)\dt\theta<\infty$ (shown in \textbf{Part 1} of this result), we have 
	\begin{flalign*}
		\int_{\Theta}\|\theta\||\widehat{g}_n(S_n\mid\theta)\pi(\theta)- {g}_n(S_n\mid\theta)\pi(\theta)|\dt \theta
		\lesssim m^{-1}\int_\Theta{g}_n(S_n\mid\theta)\pi(\theta)\dt\theta\int_\Theta \|\theta\|\pi(\theta\mid S_n)\dt\theta,
	\end{flalign*} Applying the change of variables $\theta\mapsto t$, and the triangle inequality, we have that 
	\begin{flalign*}
		\int_\Theta \|\theta\|\pi(\theta\mid S_n)\dt\theta&\le n^{-d_\theta/2}\int_{\mathcal{T}_n}\|t\||\pi(t\mid S_n)-N\{t;0,\Delta\}|\dt t+n^{-d_\theta/2}\int_{\mathcal{T}_n}\|t\|N\{t;0,\Delta\}\dt t+\|\theta_n\|\\&=o_p(1)+\|\theta_n\|, 
	\end{flalign*}where the  equality follows from $\mathcal{V}_{1n}=o_p(1)$ (\textbf{Part 1} of the result). We can therefore conclude that 
	\begin{eqnarray}\label{eq:ppp}
		m^{-1} \int_\Theta \|\theta\|\pi(\theta\mid S_n)\dt\theta\le o_p(1/m)+\|\theta_n\|/m=O_p(1/m) 
	\end{eqnarray}
	since, by Lemma \ref{lem:cons}, $\|\theta_n-\theta_\star\|=o_p(1)$. 
	Applying similar arguments to the proof of Lemma \ref{lem:orders}, and equation \eqref{eq:ppp}, to conclude that  
	$$
	\mathcal{V}_{2n}=\int_\Theta\|\theta\||\widehat{\pi}(\theta\mid S_n)-{\pi}(\theta\mid S_n)|\dt\theta\lesssim \frac{1}{m}\int_\Theta\|\theta\|\pi(\theta\mid S_n)\dt\theta+O_p(1/m)=o_p(1).
	$$
\end{proof}	

\begin{proof}[Proof of Corollary \ref{corr:bslm}]
	The change of variables $\theta\mapsto t={{\sqrt{n}}}(\theta-\theta_n)$ yields
	\begin{flalign*}
		\overline{\theta}_n&=\int_{\Theta} \theta\widehat{\pi}(\theta\mid S_n)\dt \theta=\int_{\mathcal{T}_n} ({t}/{{{\sqrt{n}}}}+\theta_n)\widehat{\pi}(t\mid S_n)\dt t=\frac{1}{{{\sqrt{n}}}}\int_{\mathcal{T}_n} t \widehat{\pi}(t\mid S_n)\dt t +\theta_n,	
	\end{flalign*}so that
	\begin{flalign}
		{{\sqrt{n}}}(\overline{\theta}_n-\theta_n)&=\int_{\mathcal{T}_n} t \widehat{\pi}(t\mid S_n)\dt t= \int_{\mathcal{T}_n} t \left[\widehat{\pi}(t\mid S_n)-N\{t;0,\Delta\}\right]\dt t+\int_{\mathcal{T}_n} t N\{t;0,\Delta\}\dt t.\label{eq:post_mean1}
	\end{flalign}The second term on the right-hand side of \eqref{eq:post_mean1} is zero by definition. Therefore,	by Theorem \ref{thm:two},
	\begin{flalign*}
		\left\|{{\sqrt{n}}}(\overline{\theta}_n-\theta_n)\right\|&=\left\| \int t \left[\widehat{\pi}(t\mid S_n)-N\{t;0,\Delta\}\right]\dt t\right\|\leq \int \|t\|\left|\widehat{\pi}(t\mid S_n)-N\{t;0,\Delta\}\right|\dt t=o_p(1).
	\end{flalign*} Furthermore, under Assumption \ref{ass:norm} and Lemma \ref{lemma:freq}, standard arguments can be used to show that 
	\begin{flalign*}
		{{\sqrt{n}}}(\theta_n-\theta_\star)\Rightarrow N(0,\Delta W_\star \Delta^\intercal).
	\end{flalign*}
	The preceding two display equations together yield the stated result. 
\end{proof}

\begin{proof}[Proof of Corollary \ref{corr:frac}]The proof follows a similar approach to the proof of Theorem \ref{thm:one}. In particular, define  $C_\Delta^\alpha= 1/\{{(2\pi)}^{d_\theta}|\Delta|^{}\}^{\alpha/2}$ and let $\Pi_\alpha(\gamma)=C_\Delta^\alpha\int_{\|t\|\le\gamma}\exp\left(-\frac{\alpha}{2}t^\intercal \Delta^{-1}t\right)\d t$. By the triangle inequality, 
	\begin{flalign*}
		&\left|\int_{\|t\|\le \gamma} \widehat{\pi}_\alpha(t\mid S_n)\d t-C_\Delta^\alpha\int_{\|t\|\le\gamma}\exp\left(-\frac{\alpha}{2}t^\intercal \Delta^{-1}t\right)\d t\right|\\&=\left|\int_{\|t\|\le \gamma} \widehat{\pi}_\alpha(t\mid S_n)\d t-\Pi_\alpha(\gamma)\right|\\&\leq\left|\int_{\|t\|\le\gamma}\left\{\widehat{\pi}_\alpha(t\mid S_n)-\pi_\alpha(t\mid S_n)\right\}\d t\right|+\left|\int_{\|t\|\le \gamma}{\pi}_\alpha(t\mid S_n)\d t-\Pi_\alpha(\gamma)\right|.
	\end{flalign*}
	Let $g_n^\alpha(S_n\mid\theta)=N\{S_n;b_n(\theta),\Sigma_n(\theta)\}^\alpha$, and recall that $\widehat{g}_n^\alpha(S_n\mid\theta):=\E_z[N\{S_n;\widehat{b}_n(\theta),\widehat{\Sigma}_n(\theta)\}^\alpha]$, so that we have 
	$$  
	\pi_\alpha(\theta\mid S_n)=\frac{g_n^\alpha(S_n\mid\theta)\pi(\theta)}{\int_\Theta g_n^\alpha(S_n\mid\theta)\pi(\theta)\dt\theta}, \text{ and }\widehat{\pi}_\alpha(\theta\mid S_n)=\frac{\widehat{g}_n^\alpha(S_n\mid\theta)\pi(\theta)}{\int_\Theta \widehat{g}_n^\alpha(S_n\mid\theta)\pi(\theta)\dt\theta}.
	$$
	Given these forms, the same argument used to prove Lemma \ref{lem:orders} (see Section \ref{app:lemmas}) allow us to conclude that 
	$$
	\int_{\|t\|\le\gamma}\left|\widehat{\pi}_\alpha(t\mid S_n)-\pi_\alpha(t\mid S_n)\right|\d t=O_p(1/m),\quad \text{for }m\rightarrow\infty\text{ as }n\rightarrow\infty.
	$$

	The stated result then follows  if $J_n=|\int_{\|t\|\le \gamma}{\pi}_\alpha(t\mid S_n)\d t-\Pi_\alpha(\gamma)|=o_p(1).$ To this end, recall the definition of $\theta_n$ as the value of $\theta\in\Theta$ such that $0=M_n(\theta)=\partial Q_n(\theta)/\partial\theta$. Since,  $0=\partial Q_n(\theta_n)/\partial\theta$, for  $Q^\alpha_n(\theta)=-\frac{\alpha}{2}\|\Sigma_{n}(\theta)^{-1/2}\{b_n(\theta)-S_n\}\|^2$, and for any fixed $\alpha\in[0,1]$, it follows that $0=\partial Q^\alpha_n(\theta_n)/\partial\theta$. Hence, we can rewrite $\pi_\alpha(\theta\mid S_n)$ as in the proof of Theorem \ref{thm:one}, namely, 
	\begin{flalign*}
		\pi_\alpha(\theta\mid S_n)&= \pi_\alpha(\theta_n\mid S_n)\left[{|n\Sigma_n^{}(\theta)|}/{|n\Sigma_n^{}(\theta_n)|}\right]^{-\alpha/2}\exp\left\{Q^\alpha_n(\theta)-Q^\alpha_n(\theta_n)\right\}\pi(\theta)/\pi(\theta_n).
	\end{flalign*} The form of the factional posterior $\pi_\alpha(\theta\mid S_n)$ is the same as in equation \eqref{eq:Clev_post},  up to the constant factor $\alpha$. So long as $\alpha\in[0,1]$ and fixed, the arguments  used to prove Theorem \ref{thm:one} remain valid and will deliver the stated result. For the sake of brevity, we do not replicate this argument.
\end{proof}

\begin{proof}[Proof of Corollary \ref{corr:absl}]
	Take $\theta\sim \widehat{\pi}(\theta\mid S_n)$, with $\widehat{\pi}(\theta\mid S_n)$ the naive  poster based on $\Sigma_n(\theta)=I_{d}/n$. Under this choice of $\Sigma_n(\theta)$, Theorem \ref{thm:two} implies that 
	$
	\|\Pi(t\in\cdot\mid S_n)-N\{0,\Delta\}\|_{\mathrm{TV}}=o_p(1),
	$ where $\|\cdot\|_{\mathrm{TV}}$ denotes the total variation distance, and $\Delta:=\{-H(\theta_{\bullet})\}^{-1}$, with $\theta_{\bullet}:=\arg\min_{\theta\in\Theta}\|b(\theta)-b_{\bullet}\|$. Recall $\tilde\theta=\bar\theta_n=\widehat\Delta_n+\widehat{W}_{\sqrt{n}}\widehat\Delta_n^{-1/2}(\theta-\bar\theta_n)$ and write
	\begin{flalign*}
		\vartheta = \sqrt{n}(\tilde\theta-\theta_n)&=\sqrt{n}(\bar\theta_n-\theta_n)+\widehat\Delta_n\widehat{W}^{1/2}_n\widehat\Delta_n^{-1/2}\sqrt{n}(\theta-\theta_n)-\widehat\Delta_n\widehat{W}^{1/2}_n\widehat\Delta_n^{-1/2}(\bar\theta_n-\theta_n)
		\\&=\sqrt{n}(\bar\theta_n-\theta_n)+\Delta W_\star \Delta^{-1/2}\sqrt{n}(\theta-\theta_n)-\widehat\Delta_n\widehat{W}^{1/2}_n\widehat\Delta_n^{-1/2}\sqrt{n}(\bar\theta_n-\theta_n)\\&-\left(\Delta W_\star \Delta^{-1/2}-\widehat\Delta_n\widehat{W}^{1/2}_n\widehat\Delta_n^{-1/2}\right)\sqrt{n}(\theta-\theta_n).
	\end{flalign*}
	From Corollary \ref{corr:bslm}, $\sqrt{n}(\bar\theta_n-\theta_n)=o_p(1)$ as $m,n\rightarrow\infty$. Further, since $\sqrt{n}(\theta-\theta_n)\mid S_n\Rightarrow N(0,\Delta)$ (Theorem \ref{thm:two}), for $\widehat{W}_n$ and $\widehat\Delta_n$ both consistent, by Slutsky's Theorem we have that, for $\theta\sim \widehat{\pi}(\theta\mid S_n)$, $$\left(\Delta W_\star \Delta^{-1/2}-\widehat\Delta_n\widehat{W}^{1/2}_n\widehat\Delta_n^{-1/2}\right)\sqrt{n}(\theta-\theta_n)=o_p(1).$$  Define
	$$
	x_n=\sqrt{n}(\bar\theta_n-\theta_n)-\widehat\Delta_n\widehat{W}^{1/2}_n\widehat\Delta_n^{-1/2}\sqrt{n}(\bar\theta_n-\theta_n) -\left(\Delta W_\star \Delta^{-1/2}-\widehat\Delta_n\widehat{W}^{1/2}_n\widehat\Delta_n^{-1/2}\right)\sqrt{n}(\theta-\theta_n),
	$$ so that, for $t=\sqrt{n}(\theta-\theta_n)$, with $\theta\sim \widehat{\pi}(\theta\mid S_n)$, 
	$$
	\vartheta = x_n+\Delta W_\star \Delta^{-1/2}\sqrt{n}(\theta-\theta_n) = x_n+\Delta W_\star \Delta^{-1/2}t.
	$$
	Since the total variation distance is invariant under a shift of location and a change of scale, we have the following string of equalities:
	\begin{flalign*}
		o_p(1)&=\|\Pi(t\in\cdot\mid S_n)-N\{0,\Delta\}\|_{\mathrm{TV}}
		\\&=\|\Pi(x_n+\Delta W_\star \Delta^{-1/2}t\in\cdot\mid S_n)-N\{x_n,\Delta W_\star^{1/2} \Delta^{-1/2}\Delta\Delta^{-1/2}W^{1/2}_\star\Delta\}\|_{\mathrm{TV}}\\&=\|\Pi(\vartheta\in\cdot\mid S_n)-N\{0,\Delta W_\star\Delta\}\|_{\mathrm{TV}}.
	\end{flalign*}The first equality is a restatement of Theorem \ref{thm:two}, the second equality follows from the invariance of $\|\cdot\|_{\mathrm{TV}}$ under location and scale changes, and the third follows from a change of variables.  
	
	To prove the second part of the result,  by Corollary \ref{corr:bslm} we again have  that $\sqrt{n}(\bar\theta_n-\theta_n)=o_p(1)$ for $m,n\rightarrow\infty$. Hence,  for consistent $\widehat{W}_n$ and $\widehat\Delta_n$, we have
	\begin{flalign*}	
		\sqrt{n}(\widetilde\theta_n-\theta_{\bullet})&=\sqrt{n}(\bar\theta_n-\theta_{\bullet})+\widehat\Delta_n\widehat{W}_{\sqrt{n}}\widehat\Delta_n^{-1/2}\sqrt{n}(\bar\theta_n-\bar\theta_n)\Rightarrow N(0,\Delta W_\star\Delta^\top),
	\end{flalign*}where the last line follows from the stated result in Corollary \ref{corr:bslm}.
	
\end{proof}

\section{Key Lemmas}\label{app:lemmas}The following lemmas are used to prove our main results.
\begin{lemma}\label{bound} Under Assumptions \ref{ass:three}-\ref{ass:five}, with probability converging to one,
	$$
	0\le {\pi(\theta_n\mid S_n)}/{{n^{{d_\theta}/2}}}\le {1}/\{{{(2\pi)^{d_\theta}|\Delta|^{}}}\}^{1/2}.
	$$
\end{lemma}
\begin{proof}
	The proof proceeds via a similar series of arguments to Theorem \ref{thm:one}, and shares arguments to those used to prove Lemma 2.1 in \cite{Chen1985}. Let $\gamma_n=o(1)$ with $\gamma_n{{\sqrt{n}}}\rightarrow\infty$, and let $\mathcal{N}_\gamma=\{\theta\in\Theta:\|\theta-\theta_n\|\le\gamma_n\}$. 
	Repeating arguments used in the proof of Theorem \ref{thm:one} over $\mathcal{N}_\gamma$, yields
	\begin{flalign*}
		\int_{\mathcal{N}_\gamma}\pi(\theta\mid S_n)\d\theta&=\pi(\theta_n\mid S_n)\{1+o_p(1)\}\int_{\mathcal{N}_\gamma}\exp\left\{Q_n(\theta)-Q_n(\theta_n)\right\}\d\theta\\&=\frac{\pi(\theta_n\mid S_n)}{{n^{d_{\theta}/2}}}\{1+o_p(1)\}\int_{\|t\|\le\gamma_n{{\sqrt{n}}}}\exp\{Q_n(\theta_n+t/{{\sqrt{n}}})-Q_n(\theta_n)\}\dt t,
	\end{flalign*}
	and, for $A_n$ as defined in that proof, up to an $o(1)$ term,
	\begin{flalign*}
		-t^{\top}\Delta^{-1}t/2-t^{\top}A_n\Delta^{-1}t/2\le Q_n(\theta_n+t/{{\sqrt{n}}})-Q_n(\theta_n)&\leq -t^{\top}\Delta^{-1}t/2+t^{\top}A_n\Delta^{-1}t/2.
	\end{flalign*}
	
	Let $M^{\pm}_n=I\pm\Delta^{1/2}A_n\Delta^{-1/2}$, and rewrite the above as
	\begin{flalign*}
		-t^{\top}\Delta^{-1/2}M^+_n\Delta^{-1/2}t/2\le Q_n(\theta_n+t/{{\sqrt{n}}})-Q_n(\theta_n)&\leq -t^{\top}\Delta^{-1/2}M^-_n\Delta^{-1/2}t/2.
	\end{flalign*}
	For $\|\theta_n-\theta_\star\|$ small enough, i.e., $n$ large enough, $\Delta^{-1/2}M^\pm_n\Delta^{-1/2}$ is positive-definite (with probability converging to one). Thus, for $n$ large we bound the posterior probability over $\mathcal{N}_\gamma$ as
	\begin{flalign*}
		\int_{\mathcal{N}_\gamma}\pi(\theta\mid S_n)\d\theta
		&\leq\frac{\pi(\theta_n\mid S_n)}{{n^{{d_\theta}/2}}}\left|\Delta^{-1/2}M_n^-\Delta^{-1/2}\right|^{-1/2} \int_{T^-_n}\exp(-x^{\top}x/2)\d x \\
		&\geq\frac{\pi(\theta_n\mid S_n)}{{n^{{d_\theta}/2}}}\left|\Delta^{-1/2}M_n^+\Delta^{-1/2}\right|^{-1/2}\int_{T^+_n}\exp(-x^{\top}x/2)\d x
	\end{flalign*}where $$T^{-}_n=\left\{x:\|x\|\leq \frac{\gamma_n{{\sqrt{n}}}}{\lambda_{\text{min}}(\Delta^{-1/2}M_n^-\Delta^{-1/2})^{1/2}}\right\},\quad T^{+}_n=\left\{x:\|x\|\leq \frac{\gamma_n{{\sqrt{n}}}}{\lambda_{\text{max}}(\Delta^{-1/2}M_n^+\Delta^{-1/2})^{1/2}}\right\},$$	and
	where we have used the fact that for any positive semi-definite matrix $M$ and $\gamma>0$
	$$
	\{x:\|x\|\le\gamma/{\lambda_{\text{max}}(M)}^{1/2}\}\subseteq\{x:x^\intercal Mx\le\gamma\}\subseteq	\{x:\|x\|\le\gamma/{\lambda_{\text{min}}(M)}^{1/2}\}.
	$$
	Under the restriction $\gamma_n {{\sqrt{n}}}\rightarrow\infty$, $T_n^{+}$ and $T_n^{-}$ both converge to $\mathbb{R}^{d_\theta}$ and $\int_{T^{\pm}_n}\exp(-x^\intercal x/2)\d x\rightarrow {(2\pi)}^{d_\theta/2}$ as $n\rightarrow\infty$. Hence, with probability converging to one,
	\begin{flalign*}
		|M_n^+|^{1/2}\int_{\mathcal{N}_\gamma}\pi_n(\theta\mid S_n)\d\theta\le	\frac{\pi(\theta_n\mid S_n)}{{n^{{d_\theta}/2}}}(2\pi)^{d_\theta/2}|\Delta|^{1/2}&\le |M_n^-|^{1/2}\int_{\mathcal{N}_\gamma}\pi(\theta\mid S_n)\d\theta.
	\end{flalign*}Since $|M_n^\pm|\rightarrow1$, $|\Delta|>0$ and $0\le\int_{\mathcal{N}_\gamma}\pi(\theta\mid S_n)\d\theta\le1$, with probability converging to one, $
	0\le\frac{\pi(\theta_n\mid S_n)}{{n^{{d_\theta}/2}}}\le 1/\{(2\pi)^{d_\theta}|\Delta|^{}\}^{1/2}.
	$
\end{proof}

\begin{lemma}\label{lem:orders}If the assumptions of Theorem \ref{thm:one} are satisfied, then for $m\rightarrow\infty$ as $n\rightarrow\infty$
	$$
	\int_{\|t\|\le\gamma} |\widehat{\pi}(t\mid S_n)-\pi(t\mid S_n)|\dt t=O_p(1/m).
	$$
\end{lemma}
\begin{proof}[Proof of Lemma \ref{lem:orders}]
	The proof is broken up into two parts. Firstly, we show that, for each $\theta\in\Theta$,  
	\begin{flalign}
		|\widehat{g}_n(S_n\mid\theta)-g_n(S_n\mid\theta)|\le g_n(S_n\mid\theta)\{1+O(1/m)\}+O(1/m)\label{eq:new1}.
	\end{flalign}We then use equation \eqref{eq:new1} to demonstrate the 
	stated result. 	
	
	Define $\widehat{ Q}_{n}(\theta)=-\frac{1}{2}\|\widehat{\Sigma}_n(\theta)^{-1/2}\left\{\widehat{b}_{n}(\theta)-S_n\right\}\|^2$ and $\overline{Q}_n(\theta)=-\frac{1}{2}\|{\Sigma}_n(\theta)^{-1/2}\left\{\widehat{b}_{n}(\theta)-S_n\right\}\|^2$. Assumption \ref{ass:two} and the triangle inequality imply that $\sup_{\theta \in \Theta}\|n\widehat{\Sigma}_n(\theta)-n{\Sigma}_n(\theta)\|=o_p(1)$, so that we have
	$
	\widehat{Q}_n(\theta)=\overline{Q}_n(\theta)\{1+o_p(1)\}, 
	$ as $m\rightarrow\infty$, for each $\theta\in\Theta$. 
	
	Let $\E_z$ denote expectation with respect to the simulated data $z^1,\dots,z^m$ at a fixed $\theta$ and $S_n$, where the dependence of $\E_z$ on $\theta$ and $S_n$ is suppressed for notational simplicity. 
	From standard properties of quadratic forms, 
	$$
	\E_z\{\overline{Q}_n(\theta)\}=Q_n(\theta)+O(1/m)
	$$while, under the moment hypothesis in Assumption \ref{ass:five}, Lemma \ref{lem:simple} implies
	\begin{equation}\label{eq:varbound}
		\mathrm{var}_z\{\overline{Q}_n(\theta)\}=\E_z\{\overline{Q}_n(\theta)^2\}-\E_z\{\overline{Q}_n(\theta)\}^2\le O(1/m).
	\end{equation}
	
	For $x\ge0$, expand $\exp(-x)$, around $\{x-\E(x)\}$, and let $\zeta$ be some Lagrange remainder lying between $x$ and $\E(x)$: 
	\begin{flalign*}
		\exp(-x)=&\exp\{-\E(x)\}-\exp\{-\E(x)\}\{x-\E(x)\}+\frac{1}{2}\exp\{-\zeta\}\{x-\E(x)\}^2\\&\le \exp\{-\E(x)\}[1-\{x-\E(x)\}]+\{x-\E(x)\}^2,
	\end{flalign*}
	where the inequality follows since $\exp(-x)$ is bounded by unity on $x\ge0$, and since $\zeta\ge0$ by construction. Now, taking $x=-\overline{Q}_n(\theta)$ in the above expansion yields
	\begin{flalign*}
		\exp\{\overline{Q}_n(\theta)\}&\le \exp\left\{{Q}_n(\theta)\right\}\{1+O(m^{-1})\}-\exp\left[\E_z\left\{\overline{Q}_n(\theta)\right\}\right][\overline{Q}_n(\theta)-\E_z\left\{\overline{Q}_n(\theta)\right\}]\\&+[\overline{Q}_n(\theta)-\E_z\left\{\overline{Q}_n(\theta)\right\}]^2.
	\end{flalign*}Taking expectations of both sides, and applying equation \eqref{eq:varbound} yields
	\begin{flalign*}
		\mathbb{E}_z\left[\exp \left\{\overline Q_{n}(\theta)\right\}\right]&\le  \exp\left\{Q_n(\theta)\right\}\left\{1+O\left({1}/{m}\right)\right\}+O(1/m).
	\end{flalign*}
	Recalling the definitions $\widehat{g}_n(S_n\mid\theta)$ and $g_n(S_n\mid\theta)$, apply the above equation to obtain
	\begin{flalign*}
		|\widehat{g}_n(S_n\mid\theta)-g_n(S_n\mid\theta)|\leq g_n(S_n\mid\theta)\left\{O\left({1}/{m}\right)\right\}+O(1/m).
	\end{flalign*}
	
	We now use \eqref{eq:new1} to obtain the stated result. Rewrite the difference of the posteriors over $\mathcal{N}_\gamma=\{t:\|t\|\le\gamma\}$ as follows, 
	\begin{flalign*}
		\int_{\mathcal{N}_\gamma} |\widehat{\pi}(\theta\mid S_n)-\pi(\theta\mid S_n)|\d\theta&=\int_{\mathcal{N}_\gamma} \left|\frac{\widehat{g}_n(S_n\mid\theta)\pi(\theta)}{\int_\Theta \widehat{g}_n(S_n\mid\theta)\pi(\theta)}-\frac{{g}_n(S_n\mid\theta)\pi(\theta)}{\int_\Theta {g}_n(S_n\mid\theta)\pi(\theta)}\right|\dt\theta	\\&=\int_{\mathcal{N}_\gamma} \bigg{|}\frac{\{\widehat{g}_n(S_n\mid\theta)-{g}_n(S_n\mid\theta)\}\pi(\theta)}{\int_\Theta \widehat{g}_n(S_n\mid\theta)\pi(\theta)}\frac{\int_\Theta {g}_n(S_n\mid\theta)\pi(\theta)\d\theta}{{\int_\Theta {g}_n(S_n\mid\theta)\pi(\theta)\d\theta}}\\&-{{g}_n(S_n\mid\theta)\pi(\theta)}\left(\frac{1}{\int_\Theta {g}_n(S_n\mid\theta)\pi(\theta)\d\theta}-\frac{1}{\int_\Theta \widehat{g}_n(S_n\mid\theta)\pi(\theta)\d\theta}\right)\bigg{|}\dt\theta
	\end{flalign*}
	and apply the triangle inequality twice to obtain, 
	\begin{flalign*}
		\int_{\mathcal{N}_\gamma} |\widehat{\pi}(\theta\mid S_n)-\pi(\theta\mid S_n)|\d\theta&\leq \int_{\mathcal{N}_\gamma}\frac{|\widehat{g}_n(S_n\mid\theta)-{g}_n(S_n\mid\theta)|\pi(\theta)}{\int_\Theta \widehat{g}_n(S_n\mid\theta)\pi(\theta)}\dt\theta\\&+\int_{\mathcal{N}_\gamma} \frac{{g}_n(S_n\mid\theta)\pi(\theta)}{\int_\Theta {g}_n(S_n\mid\theta)\pi(\theta)\d\theta}\left(1-\frac{\int_\Theta {g}_n(S_n\mid\theta)\pi(\theta)\d\theta}{\int_\Theta \widehat{g}_n(S_n\mid\theta)\pi(\theta)\d\theta}\right)\dt\theta\\& \leq \int_{\mathcal{N}_\gamma}\frac{|\widehat{g}_n(S_n\mid\theta)-{g}_n(S_n\mid\theta)|\pi(\theta)}{\int_\Theta \widehat{g}_n(S_n\mid\theta)\pi(\theta)}\dt\theta+\left(1-\frac{\int_\Theta {g}_n(S_n\mid\theta)\pi(\theta)\d\theta}{\int_\Theta \widehat{g}_n(S_n\mid\theta)\pi(\theta)\d\theta}\right)
	\end{flalign*}where the second inequality uses the fact that $0\le\int_{\mathcal{N}_\gamma}\pi(\theta\mid S_n)\le1$. 
	Apply equation \eqref{eq:new1} twice to obtain
	\begin{flalign*}
		&\int_{\mathcal{N}_\gamma}\frac{|\widehat{g}_n(S_n\mid\theta)-{g}_n(S_n\mid\theta)|\pi(\theta)}{\int_\Theta \widehat{g}_n(S_n\mid\theta)\pi(\theta)}+\left(1-\frac{\int_\Theta {g}_n(S_n\mid\theta)\pi(\theta)\d\theta}{\int_\Theta \widehat{g}_n(S_n\mid\theta)\pi(\theta)\d\theta}\right)\\&= \int_{\mathcal{N}_\gamma}\frac{|\widehat{g}_n(S_n\mid\theta)-{g}_n(S_n\mid\theta)|\pi(\theta)}{\int{g}_n(S_n\mid\theta)\pi(\theta)\d\theta\left\{1+O(1/m)\right\}}+\left(1-\frac{\int_\Theta {g}_n(S_n\mid\theta)\pi(\theta)\d\theta}{\int_\Theta {g}_n(S_n\mid\theta)\pi(\theta)\d\theta\left\{1+O(1/m)\right\}}\right)+O(1/m)\\&=\frac{1}{m}\int_{\mathcal{N}_\gamma}\pi(\theta\mid S_n)\dt\theta+O(1/m).
	\end{flalign*}Since  $0\le\int_{\mathcal{N}_\gamma}\pi(\theta\mid S_n)\le1$, the result follows. 
\end{proof}

The following result is used in Lemma \ref{lem:orders} to upper bound the variance of the log synthetic likelihood, which can be expressed as a specific type of quadratic form. As we are unaware of a general reference for the variance of this type of quadratic form, we state (and prove) the following general result.
\begin{lemma}\label{lem:simple}
	Let $Z\in\mathbb{R}^d$ be a random variable with mean $\mu=\E[Z]$, variance matrix $\Sigma$, and finite fourth moments. Let $Z_1,\dots,Z_m$ denote independent and identically distributed copies of $Z$, and let $\overline{Z}=\frac{1}{m}\sum_{i=1}^{m}Z_i$. For $M$ a $(d\times d)$-dimensional positive semi-definite matrix, as $m\rightarrow\infty$,
	$$
	\mathrm{var}(\overline{Z}^\intercal M\overline{Z})\le O(1/m)+O(1/m^{3/2})+O(1/m^2).
	$$
\end{lemma}

\begin{proof}
	Writing $M=A^\intercal A$, for some square-root matrix $A$,
	\begin{flalign*}
		\|A\overline{Z}\|^2=\overline{Z}^{\top}M\overline{Z}&=(\overline{Z}-\mu)^{\top}M(\overline{Z}-\mu)-\mu^{\top}M\mu+2\mu^{\top}M\overline{Z}=\|A(\overline{Z}-\mu)\|^2-\|A\mu\|^2+2\mu^{\top}M\overline{Z},	
	\end{flalign*}
	and
	\begin{flalign*}
		\mathrm{var}\{\|A\overline{Z}\|^2\}&=\mathrm{var}\{\|A(\overline{Z}-\mu)\|^2\}+4\mathrm{var}(\mu^{\top}M\overline{Z})+2\text{cov}\{2\mu^{\top}M\overline{Z},\|A(\overline{Z}-\mu)\|^2\}.
	\end{flalign*}From $\mathrm{var}(\mu^{\top}M\overline{Z})=\mu^\intercal M\Sigma M\mu/m$, and Cauchy-Schwartz,
	\begin{flalign}
		\mathrm{var}(\|A\overline{Z}\|^2)&\le\mathrm{var}\{\|A(\overline{Z}-\mu)\|^2\}+4\{\mu^{\top}M\Sigma M\mu\}/m\nonumber\\&+2\{4(\mu^{\top}M\Sigma M\mu)/m\}^{1/2}[{\mathrm{var}\{\|A(\overline{Z}-\mu)\|^2\}}]^{1/2}.\label{eq:newvar}
	\end{flalign}
	
	Fist, let us bound $(\mu^{\top}M\Sigma M\mu)/m$. Recall that, for any positive semi-definite and symmetric matrices $A,B$, of compatible size, 
	\begin{equation}\label{eq:trace}
		\text{Tr}(AB)^2\le \text{Tr}(A^2)\text{Tr}(B^2)\le[\text{Tr}(A)]^2[\text{Tr}(B)]^2. 	
	\end{equation}Using positive semi-definiteness and symmetry of $M$ and $\Sigma$, and applying (twice) equation \eqref{eq:trace}, we have
	\begin{flalign*}
		\mu^{\top}M(\Sigma/m)M\mu=\text{Tr}(\mu^{\top}M\Sigma M\mu)/m&=\text{Tr}(M\Sigma M\mu\mu^{\top})/m\\&\le \text{Tr}(M\Sigma M)\text{Tr}(\mu\mu^{\top})/m\\&\le  \|\mu\|^2\text{Tr}(M^2)\text{Tr}(\Sigma)/m,
	\end{flalign*}where the last line follows since $\text{Tr}(\mu\mu^\intercal)=\|\mu\|^2$. From Assumption \ref{ass:five}, $\|\mu\|^2\lesssim d$, and $\tr(\Sigma)\lesssim d$, and we obtain
	\begin{flalign}\label{eq:simp2} 
		\mu^{\top}M(\Sigma/m)M\mu& \le  \|\mu\|^2\text{Tr}(M^2)\text{Tr}(\Sigma)/m\le d^2C \text{Tr}(M^2)/m = O\{\|M\|^2/m\},
	\end{flalign} where the last inequality follows by \eqref{eq:trace} and $\text{Tr}(M^2)\le \|M\|^2$.

	Now, we upper bound $\mathrm{var}\{\|A(\overline{Z}-\mu)\|\}$ using
	$$
	\mathrm{var}\{\|A(\overline{Z}-\mu)\|^2\}\le \E\{\|A(\overline{Z}-\mu)\|^4\}+[\E\{\|A(\overline{Z}-\mu)\|^2\}]^2.
	$$
	and the result follows by bounding both terms.  Firstly, for $\overline{z}_j$ (resp., $\mu_j$) denoting the $j$-th marginal component of $Z$ (resp., $\mu$),
	\begin{flalign*}
		\E\{\|A(\overline{Z}-\mu)\|^4\}&=\E \text{Tr}\{(\overline{Z}-\mu)^\intercal A^\intercal A(\overline{Z}-\mu)\}^2=\E\text{Tr}\{A^\intercal A(\overline{Z}-\mu)(\overline{Z}-\mu)^\intercal \}^2 \\&\le \E\text{Tr}\{(\overline{Z}-\mu)(\overline{Z}-\mu)^\intercal\}^2\text{Tr}(M^2)\\&=\text{Tr}\E\left[\left(\sum_{j=1}^{d}(\overline{z}_j-\mu_j)^2\right)^2\right]\text{Tr}(M^2).
	\end{flalign*}Using H\"{o}lders-inequality to bound the cross-terms we have, for some $C>0$,
	\begin{flalign*}
		\E\left[\left(\sum_{j=1}^{d}(\overline{z}_j-\mu_j)^2\right)^2\right]&\le \sum_{j=1}^{d}\E(\overline{z}_j-\mu_j)^4+\sum_{i\ne j}[\E(\overline{z}_i-\mu_i)^4]^{1/2}[\E(\overline{z}_j-\mu_j)^4]^{1/2}
	\end{flalign*}Applying, in-turn, the previous display equations we have
	\begin{flalign*}
		\mathrm{var}\{\|A(\overline{Z}-\mu)\|\}&
		\lesssim \max_{j\in\{1,\dots,d\}}\E\left[(\bar{z}_j-\mu_j)^4\right].
	\end{flalign*}From, e.g., Corollary 4.3 in  \cite{severini2005elements}, for $z_j$ (and $\mu_j$) the $j$-th element of $Z$ (and $\mu$),
	$$
	\E[(\bar{z}_j-\mu_j)^4]=\frac{3}{m^{2}} \E\{(z_j-\mu_j)^2\}^2+\frac{1}{m^{3}}\left[\E\{(z_j-\mu_j)^4\}-3 \E\{(z_j-\mu_j)^2\}^{2}\right].
	$$Hence, for $m$ large,
	$\mathrm{var}\{\|A(\overline{Z}-\mu)\|\}\le O(1/m^2)+O(1/m^3).$
	Applying this bound into equation \eqref{eq:newvar} and re-arranging orders we obtain
	\begin{flalign*}
		\mathrm{var}\{\|A(\overline{Z}-\mu)\|\}\le O(1/m)+O(1/m^{3/2})+O(1/m^2).
	\end{flalign*}
	
\end{proof}

\section{Additional results and discussion}\label{app:supp}
\subsection{Additional Results}
In this section, we collect several frequentist results that are helpful for proving the main results in the paper. Before presenting the results, we first recall some key definitions. The exact log synthetic likelihood criterion is  $\log g_n(S_n\mid\theta)$, which, for some $C$ that does not depend on $\theta$, can be written as 
\begin{flalign*}
	\log\{g_n(S_n\mid\theta)\}&=-\log\{|\Sigma_n(\theta)|\}-n\{b_n(\theta)-S_n\}^{\top}\left\{n\Sigma^{}_n(\theta)\right\}^{-1}\{b_n(\theta)-S_n\}/2+C\\
	&=-\log\{|\Sigma_n(\theta)|\}+Q_n(\theta)+C.\end{flalign*}
For $M_n(\theta)=n^{-1}\partial \log g_n(S_n\mid\theta)/\partial\theta$, using the fact that $n\Sigma_n(\theta)=\Sigma(\theta)+o_p(1)$ and $b_n(\theta)=b(\theta)+o_p(1)$, uniformly over $\Theta$ (Assumption \ref{ass:two}), $M_n(\theta)$ can be stated as (up to $o_p(1)$ terms)
\begin{flalign}
	M_n(\theta)=&-\left(\text{tr}\left\{\Sigma^{-1}(\theta)\Lambda_1(\theta)\right\},\dots,\text{tr}\left\{\Sigma^{-1}(\theta)\Lambda_{d_\theta}(\theta)\right\}\right)^{\top}/n\nonumber\\&+\{(\partial/\partial\theta^{\top}) b(\theta)\}^{\top}\Sigma^{-1}(\theta)\{b(\theta)-S_n\}\nonumber\\&-\left\{(\partial/\partial\theta^{\top}) \text{Vec}\left[\Sigma(\theta)\right]\right\}^{\top}\left[\Sigma^{-1}(\theta)\otimes\Sigma^{-1}(\theta)\right]\times\text{Vec}\left[\left\{S_n-b(\theta)\right\}\left\{S_n-b(\theta)\right\}^{\top}\right]\label{eq:bslfoc},
\end{flalign}where $\Lambda_j(\theta)=\partial \Sigma(\theta)/\partial\theta_j$, and where we recall that $b(\theta)= b_n(\theta)+o_p(1)$, for all $\theta\in\Theta$. In the case of scalar $\theta$, $M_n(\theta)$ has the more analytically useful representation 
\begin{flalign*}
	M_n(\theta)=-\frac{\text{tr}\left\{\Sigma^{-1}(\theta)\Lambda(\theta)\right\}}{n}&-\left\{(\partial/\partial\theta^{\top}) b(\theta)\right\}^{\top}\Sigma^{-1}(\theta)\{b(\theta)-S_n\}\\&+\left\{S_n-b(\theta)\right\}^{\top}\Sigma^{-1}(\theta)\Lambda(\theta)\Sigma^{-1}(\theta)\{S_n-b(\theta)\}.
\end{flalign*}
where $\Lambda(\theta)=\d \Sigma(\theta)/\d\theta$. 

Using the scalar representation of $M_n(\theta)$, a (relatively simple) analytical expression for the Hessian matrix $H_n(\theta)=\partial M_n(\theta)/\partial\theta'$ can be constructed by concatenating the scalar partial derivatives. Even in this simple case, the matrix $H(\theta_\star)$ has a complicated structure that depends on the second derivatives of $\Sigma(\theta),\;b(\theta)$, as well as the level of model misspecification $\{b_{\bullet}-b(\theta_\star)\}$. Therefore, unless $\{b_{\bullet}-b(\theta_\star)\}=0$ there is no way in general for $H(\theta_{\star})$ to satisfy a generalized information matrix equality; i.e., it will not be the case that $W_\star=-H(\theta_{\star})$ when $\|b_{\bullet}-b(\theta_\star)\|>0$. 
	
	\begin{lemma}\label{lemma:freq}
		Under Assumptions \ref{ass:three}-\ref{ass:five}, the following are satisfied.

		\noindent1. For some $\theta_\star\in\Theta_\star$, the estimator $\theta_n$ exists and satisfies $\|\theta_n-\theta_\star\|=o_p(1)$.
		
		\noindent2. If in addition to Assumptions \ref{ass:three}-\ref{ass:five}, Assumption \ref{ass:norm} is satisfied, then $\|\theta_n-\theta_\star\|=O_p(n^{-1/2})$.  
		
		\noindent3. For any $\delta_n=o(1)$, $T_n=\{\theta\in\Theta,\theta_\star\in\Theta_\star:\|\theta-\theta_\star\|\leq \delta_n\}$, and $t={{\sqrt{n}}}(\theta-\theta_\star),\; \theta\in T_n$,
		$$
		Q_n(\theta_\star+t/{{\sqrt{n}}})-Q_n(\theta_\star)=t^{\top}{{\sqrt{n}}}M_n(\theta_\star)+\frac{1}{2}t^{\top}H(\theta_\star)t\{1+O_p(\delta_n)+O_p(\|Z_n/{{\sqrt{n}}}\|)\}.
		$$
	\end{lemma}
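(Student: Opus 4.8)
The plan is to prove the three claims of Lemma~\ref{lemma:freq} in sequence, since each builds on the previous one. The three statements are, respectively, (i) consistency of the local maximizer $\theta_n$ for some $\theta_\star\in\Theta_\star$; (ii) the $\sqrt{n}$-rate of convergence under the additional Assumption~\ref{ass:norm}; and (iii) a stochastic Taylor expansion of $Q_n$ around $\theta_\star$. Throughout I would work with the representation of $M_n(\theta)$ and $H_n(\theta)$ displayed just before the lemma, and repeatedly invoke Assumption~\ref{ass:two}, which lets me replace $n\Sigma_n(\theta)$ by the continuous, positive-definite limit $\Sigma(\theta)$ with a uniform $o_p(1)$ error.

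For part (i), I would first establish uniform convergence of the criterion. Write $\ln g_n(S_n|\theta) = -\tfrac12\log|\Sigma_n(\theta)| + Q_n(\theta)$, and note that under Assumptions~\ref{ass:three} and~\ref{ass:two} the first term is $O(\log n)$ uniformly, while $n^{-1}Q_n(\theta)$ converges uniformly in probability to $-\tfrac12\{b(\theta)-b_0\}^{\intercal}\Sigma^{-1}(\theta)\{b(\theta)-b_0\}$ by Assumption~\ref{ass:one} (which gives $\|S_n-b_0\|=o_p(1)$) together with the uniform convergence of $n\Sigma_n(\theta)$ and the compactness of $\Theta$. The limiting score $M(\theta)=\plim M_n(\theta)$ then has $\Theta_\star$ as its finite zero set (Assumption~\ref{ass:limits}). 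A standard M-estimation argument then applies: since $M_n(\theta_n)=o_p(1/\sqrt n)$ and $\sup_\theta\|M_n(\theta)-M(\theta)\|=o_p(1)$, any sequence of roots must satisfy $M(\theta_n)=o_p(1)$, which by the continuity of $M$ and the finiteness and isolation of $\Theta_\star$ forces $\theta_n$ into a shrinking neighbourhood of one of its elements. The existence of a strict local maximum is exactly the content of Lemma~\ref{lem:cons}, whose proof is (as the paper notes) identified with this lemma.

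For part (ii), I would expand $0=\sqrt{n}M_n(\theta_n)$ around $\theta_\star$. A mean-value expansion gives $\sqrt{n}M_n(\theta_\star) + H_n(\bar\theta_n)\sqrt{n}(\theta_n-\theta_\star)=o_p(1)$ for an intermediate point $\bar\theta_n$ between $\theta_n$ and $\theta_\star$. By part (i), $\bar\theta_n\to\theta_\star$, so by Assumption~\ref{ass:hess} (the Lipschitz control on the Hessian) and Assumption~\ref{ass:two} we have $H_n(\bar\theta_n)=H(\theta_\star)+o_p(1)$, which is invertible because $\lambda_{\min}\{-H(\theta_\star)\}\ge\delta>0$ under~\ref{ass:limits}/\textbf{3.3$'$}. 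Assumption~\ref{ass:norm} then supplies $\sqrt{n}M_n(\theta_\star)=O_p(1)$, and solving gives $\sqrt{n}(\theta_n-\theta_\star)=-H(\theta_\star)^{-1}\sqrt{n}M_n(\theta_\star)+o_p(1)=O_p(1)$, which is the claimed rate (and incidentally records the representation used in Corollary~\ref{corr:bslm}).

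Part (iii) is the main computational obstacle and the place I would spend the most care. Here I would perform an exact second-order Taylor expansion of the scalar-weighted quadratic form $Q_n(\theta_\star+t/\sqrt n)$ in $\theta$, keeping track of the fact that both $b(\theta)$ and $\Sigma_n(\theta)$ depend on $\theta$. Writing $\theta=\theta_\star+t/\sqrt n$, I would expand to third order with integral remainder:
\begin{align*}
Q_n(\theta_\star+t/\sqrt n)-Q_n(\theta_\star)
&= t^{\intercal}\sqrt{n}M_n(\theta_\star) + \tfrac12 t^{\intercal}H_n(\theta_\star)t + R_n(t),
\end{align*}
where $R_n(t)$ collects the third-order terms. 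The delicate point is bounding $R_n(t)$ by $O_p(\|t\|^3/\sqrt n)$ uniformly over $T_n$: the third derivative of $Q_n$ involves third derivatives of $b$ and $\Sigma_n$ contracted against $\{S_n-b(\theta)\}$, so I would use Assumption~\ref{ass:hess} to control the variation of the second derivatives, Assumption~\ref{ass:two} to keep $\Sigma_n^{-1}$ bounded, and Assumption~\ref{ass:one} to bound $\|S_n-b_0\|=o_p(1)$ and the stochastic part $Z_n=\sqrt n(b_0-S_n)=O_p(1)$. Because each extra derivative in $\theta$ on $T_n$ costs a factor $n^{-1/2}$ (from $t/\sqrt n$), the third-order term carries the stated $\|t\|^3/\sqrt n$ scaling; the cross terms linear in $Z_n$ contribute the $O_p(\|t\|^2\|Z_n/\sqrt n\|)$ pieces absorbed into the remainder in the main proof of Theorem~\ref{thm:one}. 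Finally I would replace $H_n(\theta_\star)$ by its probability limit $H(\theta_\star)$, the error being absorbed into the remainder via Assumption~\ref{ass:hess} and~\ref{ass:two}, yielding the displayed expansion. The hard part is organizing the bookkeeping for the matrix-derivative remainder cleanly enough that the uniform $O_p(\|t\|^3/\sqrt n)$ bound is transparent rather than buried in index manipulation.
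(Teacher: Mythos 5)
Your overall architecture matches the paper's, and your part (ii) is a legitimate alternative: the paper does not use a mean-value expansion of the score there, but instead proves a stochastic-equicontinuity bound $\sup_{\|\theta-\theta_\star\|\le\delta_n}\|M_n(\theta)-M(\theta)-M_n(\theta_\star)\|=o_p(n^{-1/2})$ and then inverts $M$ directly via the full rank of $H(\theta_\star)$; your score-expansion route reaches the same $O_p(n^{-1/2})$ rate under the same assumptions and is, if anything, more standard. However, two steps in your proposal have genuine problems. First, in part (i) your treatment of \emph{existence} is circular: you defer it to Lemma \ref{lem:cons}, but the paper explicitly states that the proof of Lemma \ref{lem:cons} \emph{is} this lemma. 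Consistency of a root, which is what your uniform-convergence argument delivers, presupposes that a root exists; existence near $\theta_\star$ requires an inverse-function/fixed-point argument built from $M_n(\theta_\star)=o_p(1)$, continuity of $H_n$, and nonsingularity of $H(\theta_\star)$ — this is exactly what the paper obtains by verifying the sufficient conditions of Theorem 2 of Yuan (1998). Without that (or an equivalent degree-theoretic argument), part (i) is incomplete.

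Second, in part (iii) your plan to expand $Q_n$ to \emph{third} order with an integral remainder requires third derivatives of $b(\theta)$ and $\Sigma_n(\theta)$, which are not available: Assumption \ref{ass:three} gives only twice continuous differentiability, and Assumption \ref{ass:hess} gives only a Lipschitz bound on the second derivatives. The paper instead stops at a second-order expansion with the Hessian evaluated at a (term-by-term) intermediate point $\tilde\theta$, and then uses the Lipschitz property to write $\|H_n(\tilde\theta)-H(\theta_\star)\|\le O_p(\|\theta-\theta_\star\|)+O_p(\|Z_n/\sqrt{n}\|)$, which after the change of variables $t=\sqrt{n}(\theta-\theta_\star)$ produces exactly the $O_p(\|t\|^3/\sqrt{n})$ (plus an $O_p(\|t\|^2\|Z_n/\sqrt{n}\|)$) remainder. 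You do cite Assumption \ref{ass:hess} for "controlling the variation of the second derivatives," so the fix is small — replace the third-order Taylor formula by the second-order intermediate-value form — but as written the expansion you propose cannot be justified under the stated hypotheses.
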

	\begin{proof}[Proof of 1]
		The result follows from verifying the sufficient conditions in Theorem 2 of \cite{yuan1998asymptotics}. Firstly, from the definition of $M_n(\theta)$ given in equation \eqref{eq:bslfoc}, and our assumptions on $\Theta_\star$, we can show that $M_n(\theta_\star)=o_p(1)$. Secondly, from Assumptions \ref{ass:three}, $M_n(\theta)$ is continuously differentiable for all $\|\theta-\theta_\star\|\le\delta$, and some $\delta>0$. Moreover, from the definition of $H_n(\theta)$ and Assumption \ref{ass:one}, we conclude that 
		$
		\|H_n(\theta) - H(\theta)\|\leq O_p(\|S_n-b_{\bullet}\|)=o_p(1),
		$ for all $\|\theta-\theta_\star\|\le\delta$. Moreover, $H(\theta_\star)$ is non-singular by Assumption \ref{ass:three}. This verifies the sufficient conditions in \cite{yuan1998asymptotics} and we can conclude that: 1) $\theta_n$ exists for $n$ large enough; 2) $\theta_n$ satisfies, $\|\theta_n-\theta_\star\|=o_p(1)$. 
	\end{proof}
	\begin{proof}[Proof of 2]	
		To simplify the proof we only consider the case of scalar $\theta$, which allows us to make use of the specific form of $M_n(\theta)$ in the scalar case. The result can be extended by applying a similar argument dimension-by-dimension.

		Firstly, from consistency of $\theta_n$ there exists some positive $\delta_n=o(1)$	such that $$P_{\bullet}^{(n)}\left\{\|\theta_n-\theta_\star\|\ge\delta_n\right\}=o(1).$$ With $P^{(n)}_{\bullet}$ - probability converging to one for this sequence, we first show that  
		\begin{flalign}\label{eq:res1}
			\sup_{\|\theta-\theta_\star\|\le\delta_n}\|M_n(\theta)-M(\theta)-M_n(\theta_\star)\|=o_p(n^{-1/2}).
		\end{flalign}
		From the definition of $M_n(\theta)$ and $M(\theta)$ in the univariate case, for $G(\theta)=\dt b(\theta)/\dt \theta$, 
		\begin{flalign*}
			M_n(\theta)&=M(\theta)+\left[G(\theta)^{\top}\Sigma(\theta)^{-1}+2\{b(\theta)-b_{\bullet}\}^{\top}\Sigma(\theta)^{-1}\Lambda(\theta)\Sigma(\theta)^{-1}\right]\{b_{\bullet}-S_n\}\\&-\{b_{\bullet}-S_n\}^{\top}\Sigma(\theta)^{-1}\Lambda(\theta)\Sigma^{-1}(\theta)\{b_{\bullet}-S_n\}\\
			&=M(\theta)+O_p(1/{{\sqrt{n}}})+O_p(1/n),
		\end{flalign*}where the second equality follows from Assumption \ref{ass:one}. Using $M(\theta_\star)=0$,
		\begin{flalign*}
			M_n(\theta_\star)&=\left[G(\theta_\star)^{\top}\Sigma(\theta_\star)^{-1}+2\{b(\theta_\star)-b_{\bullet}\}^{\top}\Sigma(\theta_\star)^{-1}\Lambda(\theta_\star)\Sigma(\theta_\star)^{-1}\right]\{b_{\bullet}-S_n\}\\&-\{b_{\bullet}-S_n\}^{\top}\Sigma(\theta_\star)^{-1}\Lambda(\theta_\star)\Sigma(\theta_\star)^{-1}\{b_{\bullet}-S_n\}\\&=\left[G(\theta_\star)^{\top}\Sigma(\theta_\star)^{-1}+2\{b(\theta_\star)-b_{\bullet}\}^{\top}\Sigma(\theta_\star)^{-1}\Lambda(\theta_\star)\Sigma(\theta_\star)^{-1}\right]\{b_{\bullet}-S_n\}+O_p(1/n).
		\end{flalign*}
		For $B(\theta)=\Sigma^{-1}(\theta)\Lambda(\theta)\Sigma^{-1}(\theta)$, $e(\theta)=b(\theta)-b_{\bullet}$, $X(\theta)=G(\theta)'\Sigma^{-1}(\theta)$, and $Z_n={{\sqrt{n}}}\{S_n-b_{\bullet}\}$
		\begin{flalign*}
			\|M_n(\theta)-M(\theta)-M_n(\theta_\star)\|&\leq \|Z_n/{{\sqrt{n}}}\|^2\|B(\theta)-B(\theta_\star)\|\\&+\|Z_n/{{\sqrt{n}}}\|\left\|\left[X(\theta)+2e(\theta)^{\top}B(\theta)\right]-\left[X(\theta_\star)+2e(\theta_\star)^{\top}B(\theta_\star)\right]\right\|.
		\end{flalign*}By Assumption \ref{ass:hess}, $B(\theta)$, $e(\theta)$, and $X(\theta)$ are Lipschitz in a neighborhood of $\theta_\star$. Therefore, for $\delta_n$ as above, all $\|\theta-\theta_\star\|\leq\delta_n$, and some $C>0$,  
		\begin{flalign*}
			\|M_n(\theta)-M(\theta)-M_n(\theta_\star)\|&\leq C\|Z_n/{{\sqrt{n}}}\|\|\theta-\theta_\star\|\{1+\|Z_n/{{\sqrt{n}}}\|\}.
		\end{flalign*}Applying the fact that $Z_n/{{\sqrt{n}}}=O_p(1/{{\sqrt{n}}})$, this proves \eqref{eq:res1}. 
		
		With $P^{(n)}_{\bullet}$ - probability converging to one for the sequence $\delta_n$, we then have 
		\begin{flalign*}
			\|M_n(\theta_n)-M(\theta_n)-M_n(\theta_\star)\|&\leq o_p(n^{-1/2})\\&\geq \|M(\theta_n)\|-\|M_n(\theta_n)\|-\|M_n(\theta_\star)\|.
		\end{flalign*}Rearranging terms, and applying Assumption \ref{ass:norm},
		\begin{flalign*}
			\|M(\theta_n)\|&\leq o_p(n^{-1/2})+\|M_n(\theta_\star)\|\{1+o_p(1)\}=O_p(n^{-1/2}).
		\end{flalign*}From the differentiability of $M(\theta)$, and the positive-definiteness of $\{-H(\theta_\star)\}$, there exists $C$ such that 
		$
		C\|\theta_n-\theta_\star\|\leq	\|M(\theta_n)\|\leq O_p(n^{-1/2}).
		$
	\end{proof}
	
	\begin{proof}[Proof of 3]On the set $T_n$, the result follows from a Taylor expansion of $Q_n(\theta)=-\frac{1}{2}\{b_n(\theta)-S_n\}^{\top}\Sigma_{n}^{-1}(\theta)\{b_n(\theta)-S_n\}$ around $\theta_\star$, which for $M_n(\theta)$ and $H_n(\theta)$ as defined in Section \ref{sec:theory},  gives 
		\begin{flalign*}
			Q_n(\theta)=Q_n(\theta_\star)+{{\sqrt{n}}}(\theta-\theta_\star)'{{\sqrt{n}}}M_n(\theta_\star)+{{\sqrt{n}}}(\theta-\theta_\star)'H_n(\tilde\theta){{\sqrt{n}}}(\theta-\theta_\star)/2,
		\end{flalign*}for $\tilde\theta$ a term-by-term intermediate value such that $\|\tilde\theta-\theta_\star\|\leq \|\theta-\theta_\star\|$, and where $\theta\in T_n$. 
		
		Rewrite the second term as 
		\begin{flalign*}
			\frac{1}{2}t^\intercal H_n(\tilde\theta)t&=	\frac{1}{2}t^\intercal H(\theta_\star)\left\{1+H(\theta_\star)^{-1}[H_n(\tilde\theta)-H(\tilde\theta)]+H(\theta_\star)^{-1}[H(\tilde\theta)-H(\theta_\star )]\right\}t.
		\end{flalign*}
		From the definition of $H(\theta)$, and the twice continuous differentiability hypothesis on $Q_n(\theta)$, $H_n(\theta)$ is Lipschitz in this neighbourhood and we have that:\footnote{By Assumption \ref{ass:two}, the map $H_n(\theta)=\partial M_n(\theta)/\partial\theta'$ is continuously differentiable in a neighbourhood of $\theta_\star$, for some $\delta_n$, and any $\theta_\star\in\Theta_\star$. Therefore, for each $\theta_\star\in\Theta$, $H_n(\theta)$ is Lipschitz, with (possibly) differing Lipschitz constant, in this neighbourhood.} for $Z_n={{\sqrt{n}}}\{S_n-b_{\bullet}\}$
		\begin{flalign*}
			\|H_n(\tilde\theta)-H_n(\theta_\star)\|+\|H_n(\theta_\star)-H(\theta_\star)\|&\leq O_p(\|\theta-\theta_\star\|)+O_p(\|Z_n/{{\sqrt{n}}}\|)\\&= O_p(\delta_n+\|Z_n/{{\sqrt{n}}}\|).
		\end{flalign*}From the definition $t={{\sqrt{n}}}(\theta-\theta_\star)$, rearranging terms yields 
		\begin{flalign*}
			Q_n(\theta_\star+t/{{\sqrt{n}}})=Q_n(\theta_\star)+t^{\top}{{\sqrt{n}}}M_n(\theta_\star)+\frac{1}{2}t^{\top}H(\theta_\star)t\{1+O_p(\delta_n+\|Z_n/{{\sqrt{n}}}\|)\}.
		\end{flalign*}

	\end{proof}
	
	\section{Additional details and discussion}\label{app:discuss}
	\subsection{Comparison with results for correctly specified models}\label{rem:compare}
	In this section, we compare our results for BSL in misspecified models  against existing results for correctly specified models obtained in \cite{frazier2019bayesian}. As mentioned in the main text, Assumptions \ref{ass:one}, \ref{ass:two} and \ref{ass:four}, are similar to the assumptions employed by \cite{frazier2019bayesian} to deduce a Bernstein-von Mises result for $\widehat{\pi}(\theta\mid S_n)$ in \textit{correctly specified models}. However, Assumptions \ref{ass:three} and \ref{ass:hess} are stronger versions of the corresponding smoothness assumptions used in \cite{frazier2019bayesian}. In particular, the only differentiability condition required by \cite{frazier2019bayesian} was differentiability of the map $\theta\mapsto b(\theta)$, where we recall that $b(\theta)=\lim_n b_n(\theta)=\lim_n\E_z\{S_n(z)\mid\theta\}$, while our results require that $b(\theta)$ and $\Sigma_n(\theta)$ be twice continuously differentiable with well behaved derivatives (see Assumptions \ref{ass:three} and \ref{ass:hess} in the main text). 
	
	In misspecified models, the behavior of the log synthetic likelihood, $\log g_n(S_n\mid\theta)$, differs markedly from the case of correct specification analysed in \cite{frazier2019bayesian}, and the stronger smoothness conditions maintained in the main text are necessary to deal with the misspecified setting. When the model is correctly specified,  \cite{frazier2019bayesian} show that the BSL posterior can be controlled using a certain global approximation to the quadratic term in the log synthetic likelihood:
	$$
	Q_n(\theta)=\frac{n}{2}\{b(\theta)-S_n\}^{\top}\left\{n\Sigma^{}_n(\theta)\right\}^{-1}\{b(\theta)-S_n\}. 
	$$ More particularly, \cite{frazier2019bayesian} show that in correctly specified models the posterior $\pi(\theta\mid S_n)$ can be controlled through the following quadratic approximation of $Q_n(\theta)$: for some $\theta_{\bullet}\in\Theta$ such that $b(\theta_{\bullet})=b_{\bullet}$, 
	\begin{flalign}\label{eq:fraz_exp}
		Q_n(\theta)-Q_n(\theta_{\bullet})=&	n\{b(\theta_{\bullet})-S_n\}^\top \Sigma_n(\theta)^{-1}\{\partial b(\theta_{\bullet})/\partial\theta^\top\}(\theta-\theta_{\bullet})\\&-\frac{n}{2}(\theta-\theta_{\bullet})^\top \{\partial b(\theta_{\bullet})/\partial\theta^\top\}^\top \Sigma_n(\theta)^{-1} \{\partial b(\theta_{\bullet})/\partial\theta^\top\}(\theta-\theta_{\bullet})+R_n(\theta)\nonumber,
	\end{flalign}for some remainder term $R_n(\theta)$ that can be appropriately controlled.   
	
	As such, the large sample results of \cite{frazier2019bayesian}, and the proof arguments used to obtain them, ultimately depend on the validity and accuracy of the approximation in equation \eqref{eq:fraz_exp}. Critically, this global approximation is invalid if there does not exist a $\theta_{\bullet}\in\Theta$ such that $b(\theta_{\bullet})=b_{\bullet}$. To see this note that when there is no $\theta_{\bullet}$ such that $b(\theta_{\bullet})=b_{\bullet}$, the term $\sqrt{n}\{b(\theta_{\bullet})-S_n\}$ in equation \eqref{eq:fraz_exp} diverges: for any $\theta\in\Theta$, 
	$$
	\sqrt{n}\{b(\theta)-S_n\}=\sqrt{n}\{b(\theta)-b_{\bullet}\}+\sqrt{n}\{b_{\bullet}-S_n\},
	$$and when the model is misspecified $$\inf_{\theta\in\Theta}\|b(\theta)-b_{\bullet}\|>0,$$ so that the sequence $\|\sqrt{n}\{b(\theta)-S_n\}\|\rightarrow+\infty$. 
	Since $\sqrt{n}\{b(\theta)-S_n\}$  is not asymptotically bounded for any $\theta\in\Theta$, the difference $Q_n(\theta)-Q_n(\theta_{\bullet})$ cannot be controlled, and the expansion in equation \eqref{eq:fraz_exp}, \textit{which underpins each result in} \cite{frazier2019bayesian},  cannot be used  to control the posterior $\pi(\theta\mid S_n)$ in misspecified models. 
	
	Consequently, in both the multi-modal and uni-modal cases considered in this current work, the results obtained in \cite{frazier2019bayesian} for correctly specified models cannot be used to establish the behavior of the posterior in misspecified models. 
	
	In contrast to the expansion in \eqref{eq:fraz_exp}, control of the BSL posterior in the misspecified case requires control on $M_n(\theta)$ and $H_n(\theta)$, which depend on the first and second derivatives of $b(\theta)$ and $\Sigma_n(\theta)$; see Section \ref{app:supp} for details. As such, controlling the behavior of the posterior in misspecified models requires controlling both the first and second derivatives of $\log g_n(S_n\mid\theta)$, which differs markedly from the correctly specified case where we only need to control the first derivative of $b(\theta)$ (i.e., equation \eqref{eq:fraz_exp}). We speculate that without appropriate control of the second-order derivative terms, it is unlikely that large samples results similar to those in Theorem \ref{thm:one} and Theorem \ref{thm:two} can be obtained in misspecified models.

	\subsection{Comparison: adjusted posteriors}\label{app:adjust}
	
	In this subsection, we compare the adjustment procedure proposed in Section \ref{sec:rob_adjust} in the main text, with the adjustment approach for correctly specified models proposed in Section 4 of \cite{frazier2019bayesian}. First, we recall the two-steps used to produce samples from the adjusted posterior proposed in Section \ref{sec:rob_adjust} of the main text. 
	
	\medskip 
	
	\noindent1.  Set $\widehat{\Sigma}_n(\theta)=I_d/n$, for all $\theta\in\Theta$, as the variance matrix in the synthetic likelihood and obtain the corresponding naive posterior mean, $\bar{\theta}_n$, and variance, $\widehat{\Delta}_n$. 
	
	\medskip 
	
	\noindent2. For $\theta^j$, $j=1,\dots,N$, a sample from the  naive posterior, adjust $\theta^j$ according to 
	$
	\widetilde{\theta}^{j}=\bar{\theta}_n+\widehat{\Delta}_n\widehat{W}_{\sqrt{n}}\widehat{\Delta}_n^{-1/2}(\theta^j-\bar{\theta}_n),
	$ where $\widehat{W}_n$ is a consistent estimator of the asymptotic covariance of $W_\star:= \mathrm{var}\{{\sqrt{n}}M_n(\theta_\star)\}$.
	
	\medskip 
	
	The adjustment posterior obtained from Steps 1. and 2. is related to, but different from, the covariance adjustment procedure discussed in Section 4 of \cite{frazier2019bayesian}. 	In correctly specified models,  \cite{frazier2019bayesian} consider adjusting the covariance of the BSL posterior in situations where a computationally simpler, and $\theta$-dependent, misspecified covariance matrix, say ${\Psi}_n(\theta)$, is used in place of the usual covariance matrix $\widehat{\Sigma}_n(\theta)$ within the synthetic likelihood. After obtaining posterior draws for $\theta$ using the simpler covariance matrix ${{\Psi}}_n(\theta)$, the resulting draws can be adjusted using a similar approach to that given in Step 2., but where, due to the dependence on the matrix $\Psi_n(\theta)$ on $\theta$, the variance of the score equations is estimated using Gaussian processes (see Step 2 in Section 4.1 of \citealp{frazier2019bayesian}). 
	
	We advise against carrying out the original approach of \cite{frazier2019bayesian} in genuinely misspecified models. The choice of ${\Psi}_n(\theta)$ ties the adjusted posterior in \cite{frazier2019bayesian} to a pseudo-true value defined by the the misspecified covariance matrix, ${\Psi}_n(\theta)$, and since this choice is driven by computational concerns it is unlikely to produce a meaningful pseudo-true value onto which the resulting posterior will concentrate.
	
	Furthermore,  due to the presence of the $\theta$-dependent covariance matrix, in order to estimate the variance of the synthetic likelihood score equations (i.e., to estimate $W_\star$) \cite{frazier2019bayesian} must approximate the derivatives of the log synthetic likelihood using a Gaussian process, and take the variance of the Gaussian process at the posterior mean as their estimate of the variance. In contrast, since our choice of weight matrix is fixed at the identity matrix, the synthetic likelihood  score equations have a simple form that only depends on $\{\widehat{b}_n(\theta)-S_n\}$ and $\partial\widehat{b}_n(\theta)/\partial\theta^\top$, where the later term can be easily estimated using a mix of simulation from the model and finite-difference. 
	
	Consequently, unlike the approach proposed in Section 4 of \cite{frazier2019bayesian}, our approach bypasses the need to approximate derivatives of the synthetic likelihood using Gaussian processes, and, more importantly, delivers a posterior that concentrates onto a meaningful pseudo-true value.

	\section{Additional details for the MA(1) model}
	\subsection{Mean and variance}\label{sec:appmaexam}
	The researcher believes $y$ is generated according to an moving average model of order one, see equation \eqref{MA2}, and our prior beliefs are uniform over $[-1,1]$. The summary statistics are
	$\gamma_j(y)=\frac{1}{n}%
	\sum_{t=1+j}^{n}y_{t}y_{t-j}$, for $j\in\{0,1\}$, and $S_n\left( y\right) =(\gamma_0(y), \gamma_1(y))^\intercal$. In this example, the mean and variance of the summaries can be calculated exactly, with these quantities then used to construct the exact Bayesian synthetic likelihood posterior. The mean of the summaries is
	$
	b(\theta)=\mathbb{E}\{S_n(z_{1:n})\mid \theta\}=\left(1+\theta^2,\theta\right)^{\top}. 
	$ 
	
	The variance of the summaries also has a closed-form, and can be derived using the results of \cite{de1981investigation} on the variance and variance of sample autocorrelations in autoregressive integrated moving average  models.
	Partitioning $\Sigma_n(\theta)$ as $$
	\Sigma_n(\theta)=\begin{pmatrix}
		\Sigma_{11,n}(\theta)&\Sigma_{12,n}(\theta)\\\Sigma_{12,n}(\theta)&\Sigma_{22,n}(\theta)
	\end{pmatrix},
	$$ the leading terms in the components of $\Sigma_n(\theta)$ are as follows:
	\begin{flalign*}
		\Sigma_{11,n}(\theta)&=(2/n^4)\left[n^3 \cdot (1+\theta^2)^2+2 \cdot n^2 \cdot (n-1) \cdot \theta^2\right]+O(n^{-2})\\
		\Sigma_{22,n}(\theta)&=(1/n^2)\left[(n-1) \cdot ((1+\theta^2)^2+\theta^2)+2 \cdot (n-2) \cdot \theta^2\right]+O(n^{-2})\\
		\Sigma_{12,n}(\theta)&=(2/n^4)\left[n^2 \cdot ( (n-1) \cdot (2 \cdot (1+\theta^2) \cdot \theta))\right]+O(n^{-2}).
	\end{flalign*}From this representation, it is clear that each term has a dominant $O(n^{-1})$ term, and that $n\Sigma_n(\theta)$ is positive-definite for all $\theta\in[-1,1]$. 
	
	\subsection{Verification of Assumptions}\label{app:verify}
	We now verify Assumptions \ref{ass:three}-\ref{ass:norm} in the main text for the running example MA(1) model based on observed summaries $S_n(y)=(\gamma_0(y),\gamma_1(y))^\top$. 
	
	\medskip 
	
	\noindent \textbf{Assumption 1.}  The parameter space is $\Theta=[-1,1]$ and is compact. From the definitions of $b(\theta)$ and $\Sigma_n(\theta)$ in Section \ref{sec:appmaexam}, it follows directly that $\theta\mapsto\log g_n(\cdot\mid\theta)$ is twice-differentiable on $\mathrm{Int}(\Theta)$. 
	
	\medskip 
	
	\noindent \textbf{Assumption 2.} Under the true DGP in equation \eqref{MA2}, it is simple to show that $\E[S_n(y)]=b_{\bullet}=(b_{\bullet,0},b_{0,1})^\top$. To obtain the variance of $S_n(y)$, we must compute $\mathrm{var}\left(T^{-1}\sum_{t=1}^{T}y_t^2\right)$, $\mathrm{var}\left(T^{-1}\sum_{t=2}^{T}y_ty_{t-1}\right)$, and $\text{cov}\left(T^{-1}\sum_{t=1}^{T}y_t^2,T^{-1}\sum_{t=2}^{T}y_ty_{t-1}\right)$, and demonstrate that the above terms are finite. If this is the case, we can then apply Corollary 3.1 of \cite{genon2000stochastic}, with $f(z_i,\cdots,z_{d+i})=(z_{i+1}^2,z_{i+1}z_{i})^\top$, to conclude that 
	$
	\sqrt{n}\{S_n(y)-b_{\bullet}\}\Rightarrow N(0,V).
	$
	
	To obtain the result in question, we make use of the following moments given in \cite{andersen1996gmm} for the stochastic volatility model defined in equation \eqref{trueDGP} of the main text:  for $\sigma_t:=\exp\{h_t/2\}$, $j=1,2,3,\cdots,t-1$, and $r,s$ positive integers, $\mu=\omega/(1-\rho)$, and $\sigma^2=\sigma_v^2/(1-\rho^2)$, we have that, 
	\begin{flalign*}
		\E[y_t^4]&=3\E(\sigma_t^4),\\\E[y_t^2y_{t-j}^2]&=\E(\sigma^2_t\sigma^2_{t-j}),\\ \E(\sigma^r_t\sigma^s_{t-j})&=\E(\sigma_t^r)\E(\sigma_t^s)\exp\{rs\rho^j\sigma^2/4\},\\	\E(\sigma_t^r)&=\exp\{r\mu/2+r^2\sigma^2/8\}.
	\end{flalign*}
	First consider  $\mathrm{var}\left(T^{-1}\sum_{t=1}^{T}y_t^2\right)$, and note that
	$$
	\E\left[\left(T^{-1}\sum_{t=1}^{T}y_t^2\right)^2\right]=\frac{1}{T^2}\sum_{t=1}^{T}\E[y_t^4]+\frac{1}{T^2}\sum_{t\ne k}^{T}\E[y_{t}^4y_{k}^4].
	$$ Rewriting $y_t=\epsilon_t\sigma_t$, and noting that $\epsilon_t\stackrel{iid}{\sim} N(0,1)$, we see that 
	\begin{flalign*}
		\E[y_{t}^4y_{k}^4]&=\E(\epsilon_k^4\epsilon_t^4\sigma_t^4\sigma^4_k)=\E(\epsilon_k^4)\E(\epsilon_t^4)\E(\sigma^4_t\sigma^4_k)=9\E(\sigma^4_t\sigma^4_k)	.
	\end{flalign*}
	Since the above expectations are positive and finite for all $t,k$, we have that $
	\E\left[\left(T^{-1}\sum_{t=1}^{T}y_t^2\right)^2\right]=O(1/T)
	$, and $\mathrm{var}\left(T^{-1}\sum_{t=1}^{T}y_t^2\right)=O(1/T)$.
	
	Next,  consider $\mathrm{var}\left(T^{-1}\sum_{t=2}^{T}y_ty_{t-1}\right)$, and note that  
	$$
	\E\left(T^{-1}\sum_{t=2}^{T}y_ty_{t-1}\right)^2=\frac{1}{T^2}\sum_{t=2}^{T}\E[y_t^2y_{t-1}^2]+\frac{2}{T^2}\sum_{t=2}^{T}\sum_{t<k}^{}\E[y_{t}y_{t-1}y_{k}y_{k-1}].
	$$For the first term,  $0<\E(y_t^2y_{t-1}^2)=\E(\sigma^2_t\sigma^2_{t-j})<\infty$ for all $t$,  and the first term is $O(1/T)$. For the second term, the independence between $\epsilon_t,\epsilon_k$ ($t\ne k$), and the fact that $\E(\epsilon_t)=0$, can be used to show that $\E[y_{t}y_{t-1}y_{k}y_{k-1}]=0$. Hence, $\mathrm{var}\left(T^{-1}\sum_{t=2}^{T}y_ty_{t-1}\right)=O(1/T)$. 
	
	To deal with the covariance term, we must calculate terms of the form $\E(y_t^2 y_{t}y_{t-1})$. Since, $\epsilon_t\stackrel{iid}{\sim} N(0,1)$,  we have $\epsilon_t\perp \sigma_t$, $\epsilon_t\perp\sigma_k$, for all $t,k$, 
	$$
	\E(y_t^2 y_{t}y_{t-1})=\E(y_t^3y_{t-1})=\E[\sigma_t^3\epsilon_t^3\epsilon_{t-1}\sigma_{t-1}]=\E(\sigma_t^3\epsilon_t^3\E[\epsilon_{t-1}\mid\mathcal{F}_{t-1}]\sigma_{t-1})=0.
	$$Hence, the covariance term is $o(1/T)$. The asymptotic variance of $\sqrt{n}\{S_n(y)-b_{\bullet}\}$  is therefore positive-definite, and Corollary 3.1 of \cite{genon2000stochastic} yields the stated convergence in \textbf{Assumption 2}. 
	
	\medskip 
	
	\noindent \textbf{Assumption 3.} From the statement of $\Sigma_n(\theta)$ in \ref{sec:appmaexam}, note that 
	\begin{equation}\label{eq:matrix}
		\Sigma(\theta):=\lim_{n\rightarrow+\infty}n\Sigma_n(\theta)=
		\begin{pmatrix}
			(1+\theta^2)^2+2\theta^2 & 2(1+\theta^2)\theta\\2(1+\theta^2)\theta&(1+\theta^2)^2+3\theta^2
		\end{pmatrix}.
	\end{equation}
	From the structure of $\log g_n(S_n\mid\theta)$, we see that for $Q(\theta):=\plim_{n\rightarrow\infty}n^{-1}\log g_n(S_n\mid\theta)$ is given by $Q(\theta):=(-1/2)\{b(\theta)-b_{\bullet}\}^\top \Sigma(\theta)^{-1}\{b(\theta)-b_{\bullet}\}$. While $Q(\theta)$ has an analytical form, it is complicated and lengthy to state, but is available in the code accompanying the paper. 
	
	We first verify that the set $\Theta_\star$ is non-empty and finite. This can be done numerically by plotting the function $Q(\theta)$ over $\Theta$ across each value of  $b_{{\bullet},0}\in\{0.01,0.10,0.25,0.50,0.75,0.95\}$ used in the experiments. The results are presented in Figure \ref{fig:crit}, and demonstrate that, when $b_{{\bullet},0}\in\{0.75,0.99\}$, the set $\Theta_\star$ contains a single element, and for $b_{{\bullet},0}\in\{0.01,0.10,0.25\}$, the set $\Theta_\star$ contains two elements. In the case where $b_{{\bullet},0}=0.50$, it is unclear whether $\Theta_\star$ contains a continuum of elements or simply two elements that are close together, see Section \ref{sec:flat} for further discussion. As discussed in the main text, our results are not applicable to the case where $\Theta_\star$ contains a dense set of points. 
	\begin{figure}[H]
		\centering{\includegraphics[width=175mm, height=110mm]{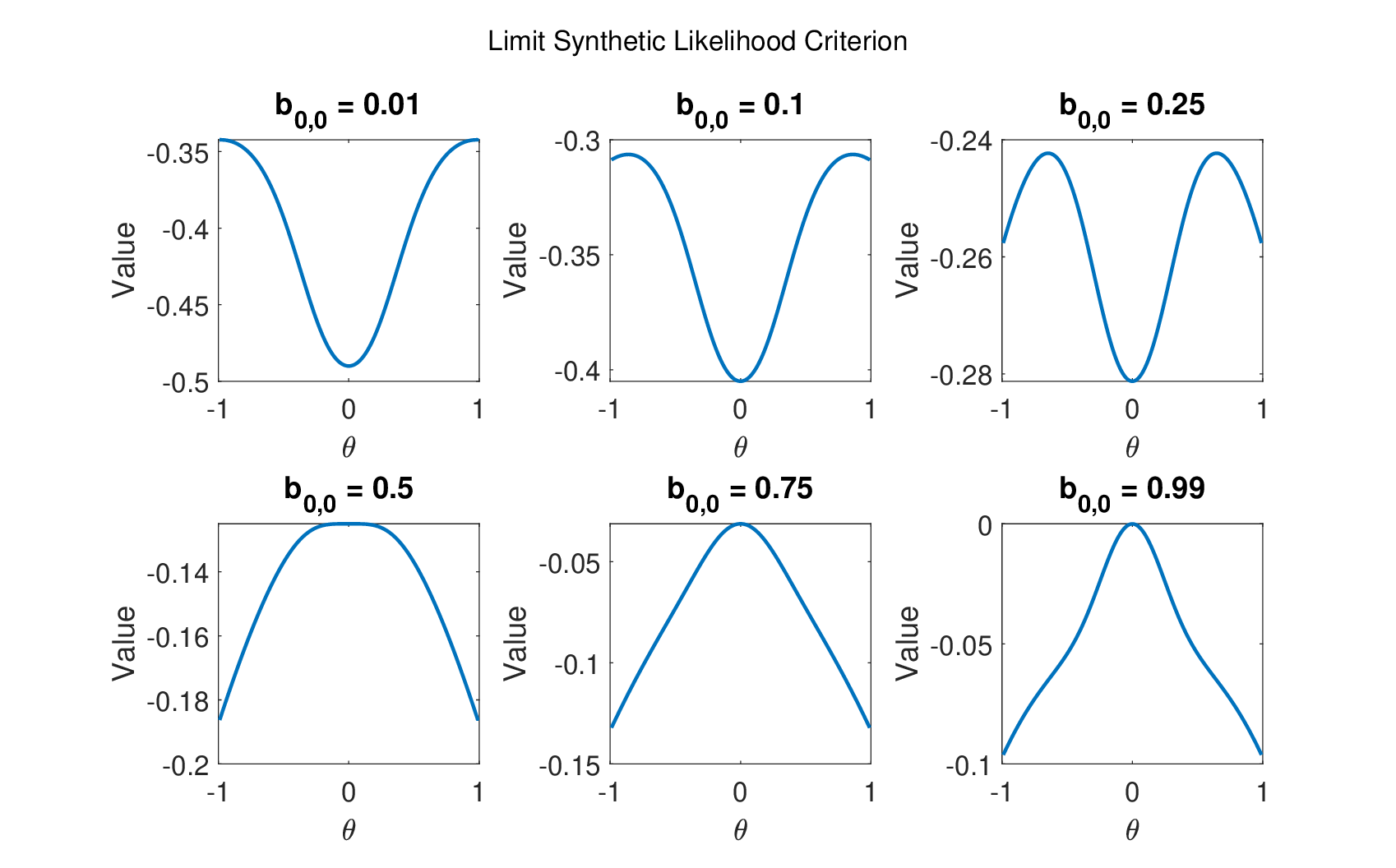}}
		\caption{Behavior of the limit synthetic likelihood criterion as the value of $b_{{\bullet},0}$ increases. }
		\label{fig:crit}
	\end{figure}
	
	To verify the second part of the assumption, we note that the limit Hessian is given by $H(\theta)=\partial^2 Q(\theta)/\partial\theta^2$. While it is feasible to state $H(\theta)$ analytically, it is complicated and does not have a simple structure. Instead, we verify this assumption numerically by showing that $-H(\theta)$ is positive at each value in $\Theta_\star$, and for each $b_{{\bullet},0}\in\{0.01,0.1,0.25,0.75,0.99\}$ used in the experiments. Analysing the result in Figure \ref{fig:hess} we see that $0<-H(\theta_\star)<\infty$ for each $\theta_\star\in\Theta_\star$, and across each situation where $b_{{\bullet},0}\in\{0.01,0.10,0.25,0.75,0.99\}$. 
	
	\begin{figure}[H]
		\centering{\includegraphics[width=175mm, height=110mm]{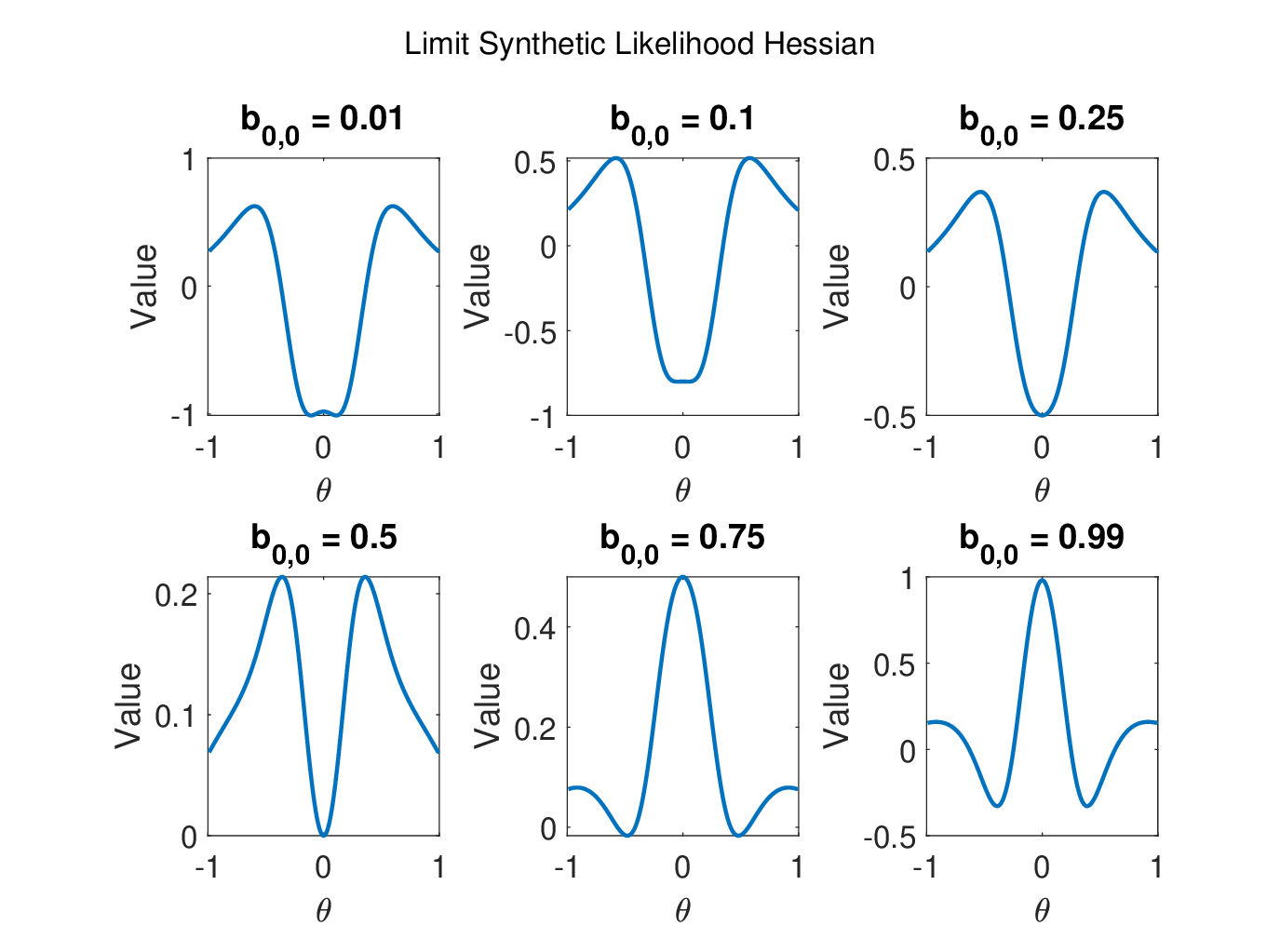}}
		\caption{Behavior of the limit (negative) synthetic likelihood Hessian as the value $b_{{\bullet},0}$ changes. }
		\label{fig:hess}
	\end{figure}

	In the case where there appears to be a flat region in the criterion, $b_{{\bullet},0}=0.50$, the Hessian is nearly-singular in a neighbourhood of zero, indicating that Assumption 3 will not be satisfied in this case. However, as stated in the main text, our results are not applicable in the case where the synthetic likelihood criterion admits a continuum of modes.

	\medskip
	
	\noindent \textbf{Assumption 4.} We remind the reader that Figure \ref{fig:hess} plots the Hessian as a function of $\theta$. Hence, Assumption 4 can be verified for each value of $b_{{\bullet},0}\in\{0.01,0.10,0.25,0.75,0.99\}$ used in the experiments by inspecting the corresponding plot in Figure \ref{fig:hess}, and noting that the resulting curve is continuously differentiable. 
	
	\begin{remark}
		We note that verification of Assumptions 1, 3, and 4 remains entirely feasible even if the mean and variance of the summaries are not available in closed form. In such cases, rather than constructing $Q(\theta)$ analytically, or numerically, as was done above, we can approximate this term via Monte Carlo by simulating datasets of large size from the assumed model over a grid of $\Theta$. That is, we can approximate $b(\theta)$ and $\Sigma(\theta)$ directly by drawing $z_1,\cdots,z_n\mid\theta$, with $n$ very large, and then estimating $b(\theta)$ and $\Sigma(\theta)$ using their corresponding sample averages. Given these estimators, the resulting numerical analysis can then proceed directly as described above, and with the unknown $b_{\bullet}$ replaced by the observed summary $S_n(y)$.   
	\end{remark}

	\medskip 
	
	\noindent \textbf{Assumption 5.} From the definition of $\Sigma_n(\theta)$ in Section \ref{sec:appmaexam}, it is direct that, for $n$ large enough, $n\Sigma_n(\theta)$ is positive-definite uniformly across $\Theta$. Hence, Assumption 5(i) is satisfied. Assumption 5(ii) follows from inspecting the definition of $\Sigma(\theta)$ given in equation \eqref{eq:matrix}. Assumption 5(iii) follows by noting that, since $n\Sigma_n(\theta)$ is continuous, and $\Theta$ compact, pointwise convergence of $n\Sigma_n(\theta)$ to $\Sigma(\theta)$ holds uniformly over $\Theta$. 
	
	\medskip 
	
	\noindent \textbf{Assumption 6.} We have that $\pi(\theta)=1/2$ for all $\theta\in[-1,1]$, which is positive and continuous for any $\theta\in\Theta$. 
	
	\medskip 
	
	\noindent \textbf{Assumption 7.} While it may be possible to verify this condition analytically, it is simpler to very this condition numerically. To this end, we generate a large number of independent replicated datasets, $M=1000000$, from the MA(1) model with sample size $n=100,500,1000$, and across an evenly-spaced grid of values for $\theta\in[-1,1]$. For each value of $\theta$, we simulate $z=(z_1,\dots,z_n)$ from the DGP and calculate $\sum_{i=1}^{M}|\gamma_0(z^i)|^4/M$ and $\sum_{i=1}^{M}|\gamma_1(z^i)|^4/M$. The resulting Monte Carlo means are plotted over $\Theta$ in Figure \ref{fig:ass7}. The figure shows that uniformly over $\Theta$ the above expectations are bounded, which verifies the stated assumption.

	\begin{figure}[H]
		\centering{\includegraphics[width=175mm, height=75mm]{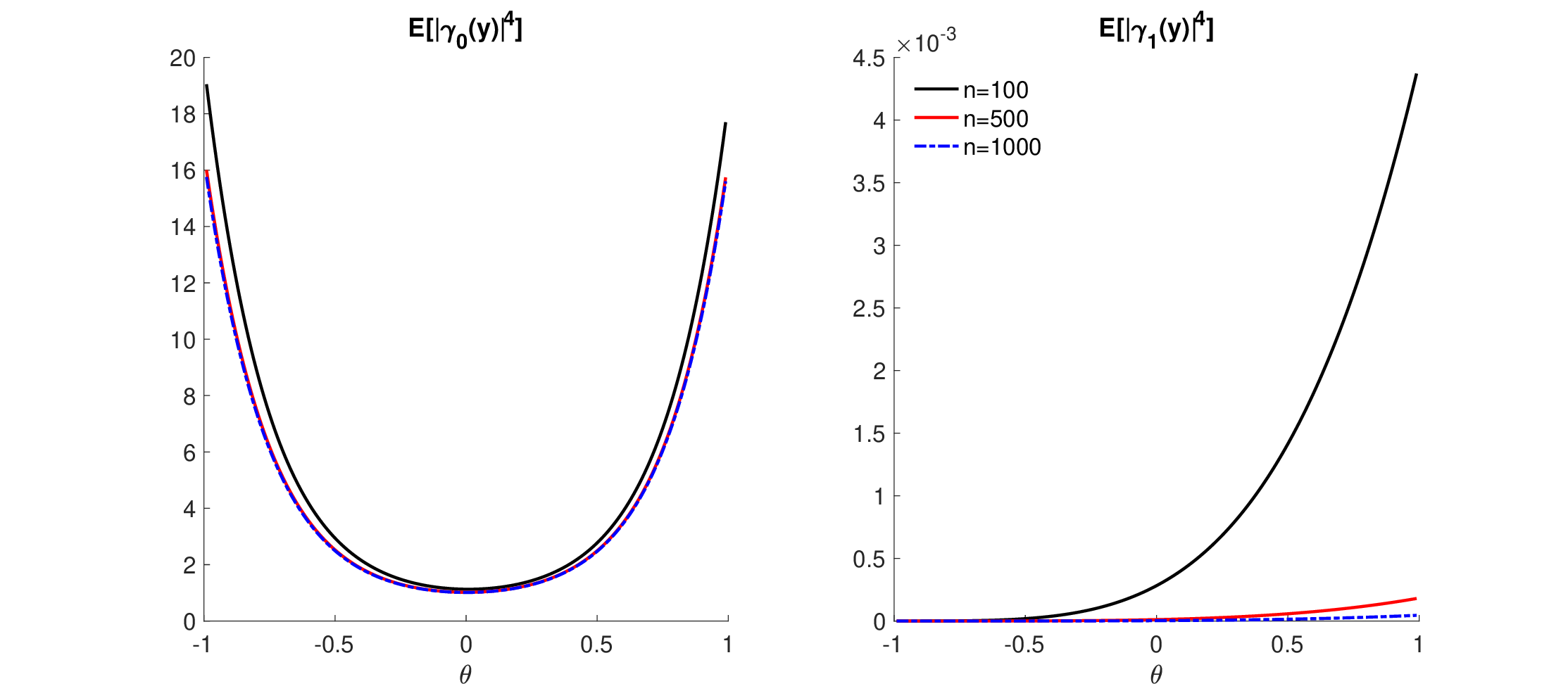}}
		\caption{Monte Carlo estimates of the fourth-moments for the summaries used in the running example.}	\label{fig:ass7}
	\end{figure}
	
	\medskip 
	
	\noindent \textbf{Assumption 9.} We  demonstrate numerically that the score equations are asymptotically Gaussian in the case where $b_{{\bullet},0}\approx0.90$. We remark that it is feasible to carry out such a procedure in this case as we know with certainty that the pseudo-true value is $\theta_\star=0$, and that without this information it would not be feasible to verify such an assumption. We verify the assumption by generating $10000$ replications from the true DGP in equation \eqref{trueDGP} in the main text across sample sizes $n=500,1000,5000$. For each of these datasets we calculate $\sqrt{n}M_n(\theta_\star)$. For each sample size, in Figure \ref{fig:ass9} we then plot the resulting kernel density  of $\sqrt{n}M_n(\theta_\star)$ across the replicated realizations, where we standardize the results to aid visual interpretability. For $n=5000,$ and $n=10000$, the resulting densities are close to the standard Gaussian density. Hence, the results in Figure \ref{fig:ass9} demonstrate that the synthetic likelihood score equations are indeed converging towards a Gaussian distribution. 
	
	\begin{figure}[H]
		\centering{\includegraphics[width=150mm, height=100mm]{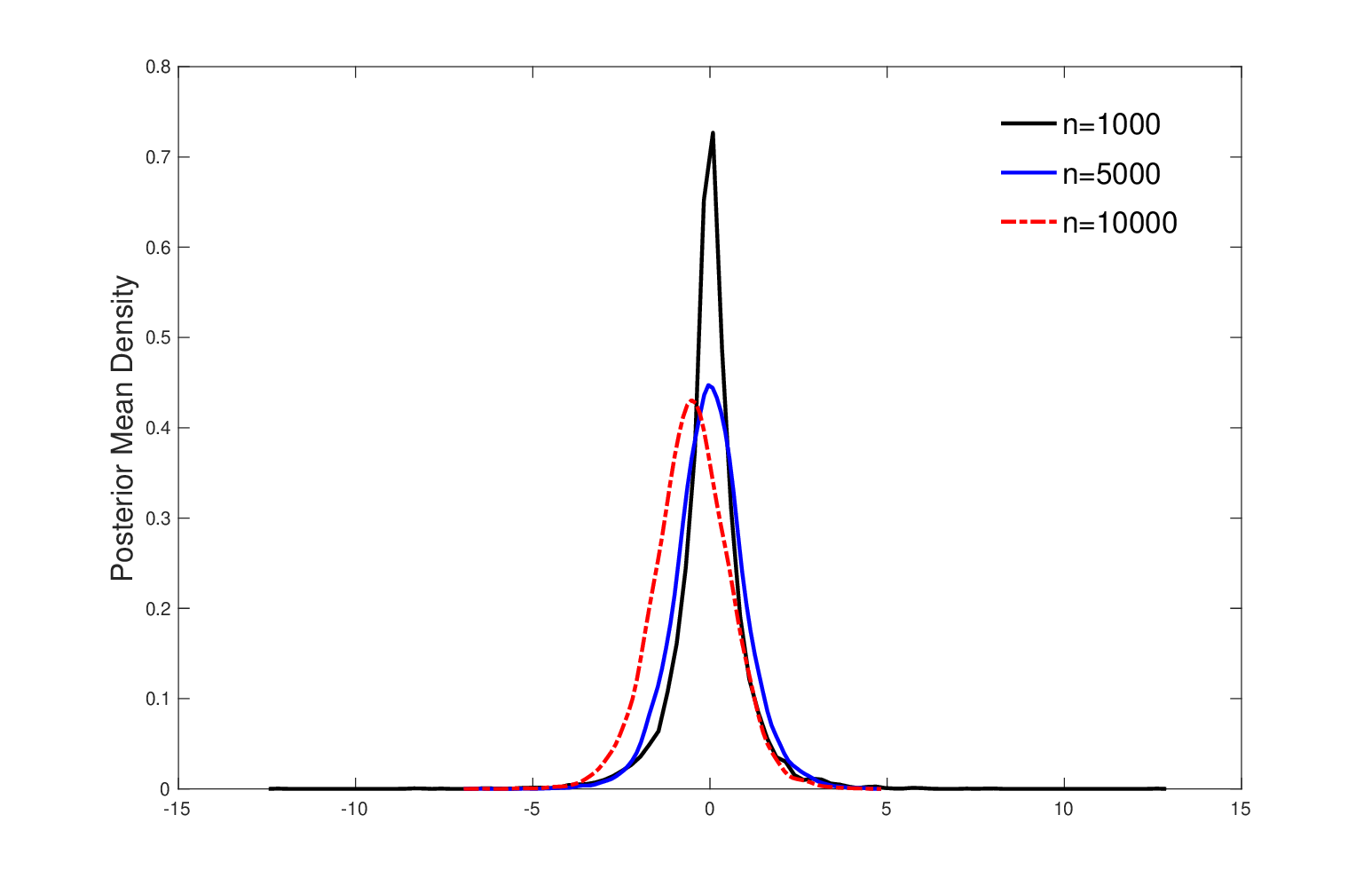}}
		\caption{Behavior of the (normalized) synthetic likelihood score equation for $b_{{\bullet},0}\approx0.90$. }
		\label{fig:ass9}
	\end{figure}
	
	\begin{remark}
		As the above discussion indicates, with the exception of Assumptions 2 and 9 in the main text, it is possible to verify each of the stated assumptions used to derive our results using simulation from the assumed model; Assumptions 2 and 9 can only be verified in cases where the true DGP and pseudo-true value are known, and cannot be verified otherwise. Critically,  little of the techniques described in this section are particular to the MA(1) model, and verifying these conditions for other models can proceed using the same procedures discussed in this section for any model where it is cheap to simulate artificial datasets. 
	\end{remark}

	\subsection{Discussion on Posterior flatness}\label{sec:flat}
	In this section, we explore the mechanism behind the posterior flatness observed in Figure \ref{fig2} in the main text when $b_{{\bullet},0}=0.50$. In particular, it is uncertain if the posterior is flat due to the existence of a genuine region across which the synthetic likelihood function is flat, or if the posterior flatness is the result of two modes that are close to one another. If the latter was indeed the case, then for reasonable sample sizes, the posterior would appear flat even though, technically, the synthetic likelihood would exhibit some curvature between the two modes. 
	
	To determine if the flatness observed in Figure \ref{fig2} is genuine,  we explore the behavior of the limiting synthetic likelihood criterion as we change the value of $b_{{\bullet},0}$ (see Section \ref{sec:maexam} for details). We plot the limit synthetic likelihood as we vary $b_{{\bullet},0}$ in a small neighbourhood between $0.490$ and $0.501$, and where the criterion is calculated over a fine  grid of values for $\theta$. The result of this exercise is presented in Figure \ref{fig:ma1_flat}.

	The behavior in Figure \ref{fig:ma1_flat} constitutes convincing evidence that the posterior flatness observed when $b_{{\bullet},0}=0.50$ may be caused by the ``merging'' of two modes. That is, this figure indicates that the flatness is created by two modes that are so close as to be indistinguishable at reasonable resolutions. At a value of $b_{{\bullet},0}=0.490$ the synthetic likelihood exhibits two well-separated modes. However, as the value of $b_{{\bullet},0}$ approaches $b_{{\bullet},0}=0.50$ the two modes become closer and closer. At $b_{{\bullet},0}=0.499$, it is possible to discern a small gap between the two modes, but at $b_{{\bullet},0}=0.500$ this gap is no longer present. 
	
	From a practical standpoint, if the criterion has two close, but well-separated modes, the posterior will behave as if the criterion function exhibited a genuine region of flatness between the two modes. That is, even if asymptotically the modes are well-separated, so long as the two are close enough then for any reasonable sample size we will observe a region where the posterior appears ``flat''. We thank an anonymous referee for asking us to reconsider this behavior. 
	
	\begin{figure}[H]
		\centering{\includegraphics[width=175mm, height=110mm]{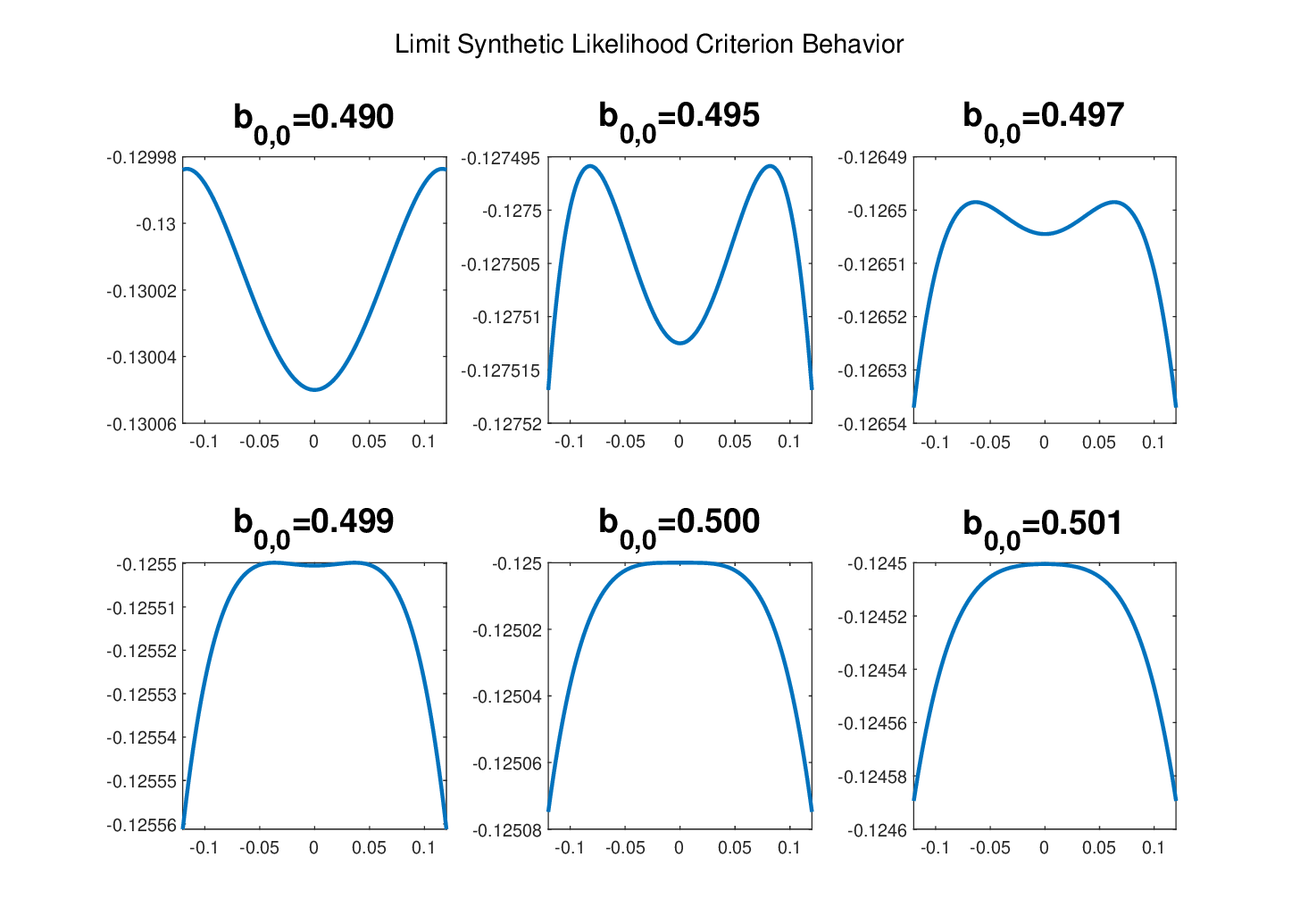}}
		\caption{Behavior of the limit synthetic likelihood criterion as the value of $b_{{\bullet},0}$ changes. }
		\label{fig:ma1_flat}
	\end{figure}

	\subsection{Asymptotic Gaussianity: Posterior Means}
	In this section, we give Monte Carlo evidence that demonstrates the conclusion of Corollary \ref{corr:bslm} in the main paper; asymptotic normality of the BSL posterior mean in the single mode case. In this experiment, we generate data from the true DGP in equation \eqref{trueDGP} for sample sizes of $n=100,500,1000$. To demonstrate these results, we restrict our analysis to the case where the limit synthetic likelihood admits an asymptotically unique  mode. To this end, we consider the true DGP in equation \eqref{trueDGP} with $(\omega,\rho,\sigma_v)=(-0.05,0.90,0.40)^\top$, which yields a value of $b_{{\bullet},0}\approx0 .90$. 
	
	For this true DGP, and the sample sizes stated earlier, we generate 1000 replications  and apply BSL with the usual covariance matrix $\widehat{\Sigma}_n(\theta)$. We use RWMH-MCMC sampling to obtain posterior draws, and we fix the number of simulated datasets generated at each iteration to be $m=\lceil n^{0.60}\rceil$. 
	
	We represent the results visually across the three sample sizes in Figure \ref{fig:meancomp}. As the sample size increases, we see that the posterior is concentrating around the point $\theta=0$, and that the concentration is roughly Gaussian. While there appear to be some outliers at small sample sizes, these dissipate as the same size increases. 
	\begin{figure}[H]
		\centerline{\includegraphics[width=130mm, height=70mm]{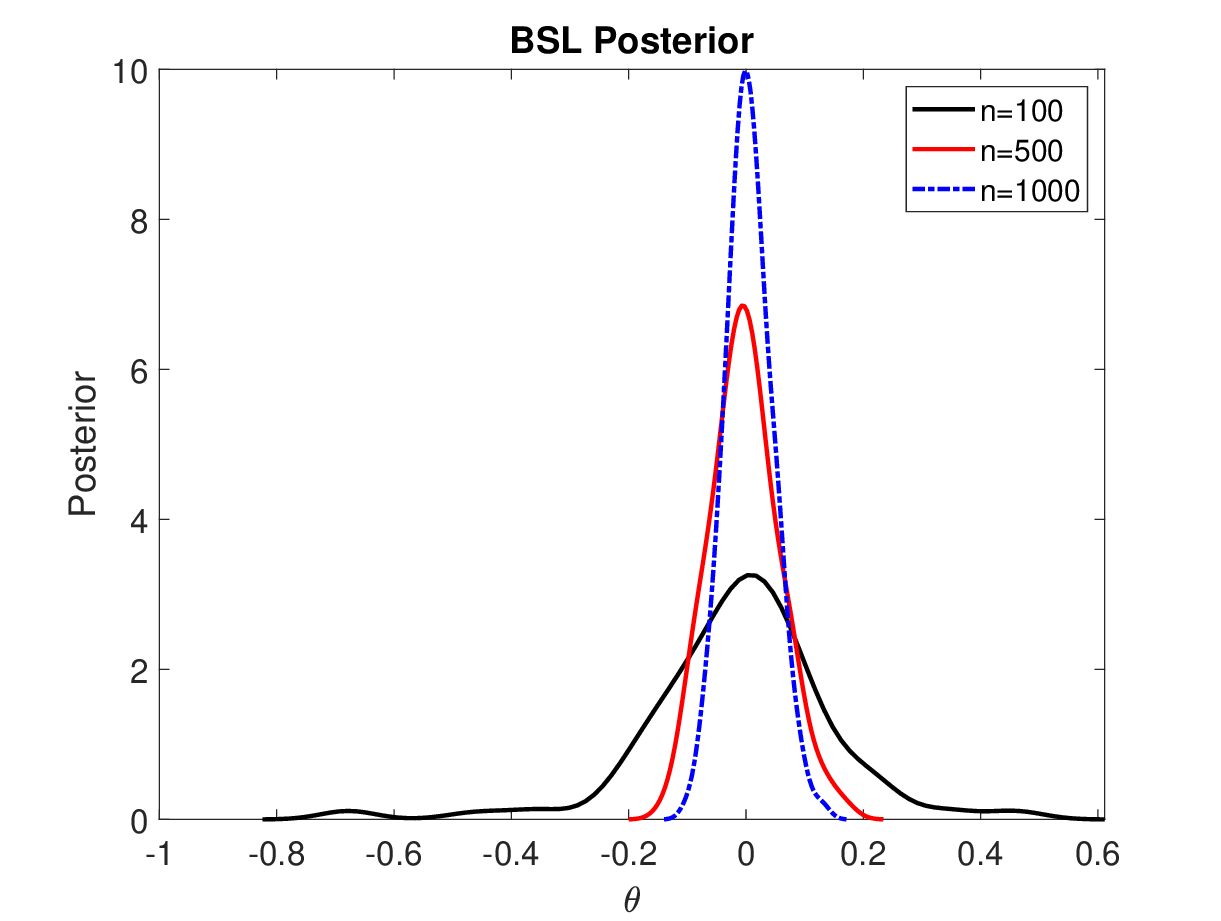}}
		\caption{Kernel density plots of the standard BSL posterior mean across 1000 replications from the DGP in equation \eqref{trueDGP} in the main text.}
		\label{fig:meancomp}
	\end{figure}

	\subsection{Multi-modality in correctly specified models}\label{sec:ma2}
	In this section, we present an illustrative example which demonstrates that the conclusions in Theorem 3.1 remain applicable in correctly specified models under identification failure. 
	
	The observed data $y=(y_1,\dots,y_n)^\intercal$ is generated according to a moving average model of order two
	\begin{equation}
		y_{t}=e_{t}+\theta_{1}e_{t-1}+\theta _{2}e_{t-2}\quad  (t=1,\dots,n), \label{MA_new}
	\end{equation}with $e_t$ independent and identically distributed $N(0,1)$, and where $(\theta_1,\theta_2)'\in(-1,1)^2$ is unknown with uniform prior beliefs over this region. We take as summary statistics the
	first two sample autocovariances $\gamma _{j}(y)=\frac{1}{n}%
	\sum_{t=1+j}^{n}y_{t}y_{t-j}$, for $j\in\{0,1\}$, and let $S_n\left( y\right) =(\gamma_0(y), \gamma_1(y))^\intercal$. 
	
	Let $\theta_{\bullet}$ denote the value of $\theta$ that has generated the data. Under this model, the simulated summaries converge in probability to
	$$
	b(\theta)=[(1+\theta_1^2+\theta_2^2),\theta_1(1+\theta_2)]^\intercal
	$$and $b_{\bullet}=b(\theta_{\bullet})$ by definition. 
	
	Now, consider a value of $\theta_{\bullet}=(.8,-.2)^\intercal$. Solving
	$$
	0=b(\theta)-b(\theta_{\bullet})=\begin{pmatrix}
		1+\theta_1^2+\theta_2^2\\\theta_1(1+\theta_2)
	\end{pmatrix}-\begin{pmatrix}
		1.68\\.64
	\end{pmatrix}
	$$
	delivers two solutions: $$\theta_{1,\star}=(.8,-.2)^\intercal\text{ and }\theta_{2,\star}=(.3683,.7378)^\intercal.$$Consequently, the set $\Theta_\star$ defined in Assumption 3.3 contains these two points.

	We now demonstrate empirically that the Bayesian synthetic likelihood posterior is bimodal with modes at $\theta_{1,\star}$ and $\theta_{2,\star}$ using a similar approach to the running example in the main text. Namely, we consider sample sizes of $n=100, 1000, 10000$ and conduct inference on $\theta_{\bullet}$ using the above summary statistics.  Often the Bayesian synthetic likelihood posterior is sampled with Markov chain Monte Carlo, however, to better capture the posterior multimodality in this example we use sequential Monte Carlo.  Specifically, we use the independent sequential Monte Carlo algorithm of \citet{South2019}, which uses an independent proposal distribution in the Markov chain Monte Carlo mutation part of the algorithm.  The independent proposal is formulated as a mixture model, whose hyperparameters are calibrated using the population of particles available in sequential Monte Carlo.  Here we simply use a two-component Gaussian mixture as the proposal.  sequential Monte Carlo requires a sequence of distributions to sample, and we use (synthetic) likelihood tempering to connect the prior and the Bayesian synthetic likelihood posterior.  We use 5000 particles in sequential Monte Carlo and $m=20$ model simulations for estimating each synthetic likelihood. 
	
	
	We plot the results graphically in Figure \ref{fig:ma2}. The results demonstrate the existence of two persistent modes around $\theta_{1,\star}$ and $\theta_{2,\star}$, with the mass around these models increasing as the sample size increases, resulting in modes that are further and further apart. 
	
	\begin{figure}[H]
		\centerline{\includegraphics[scale=0.7]{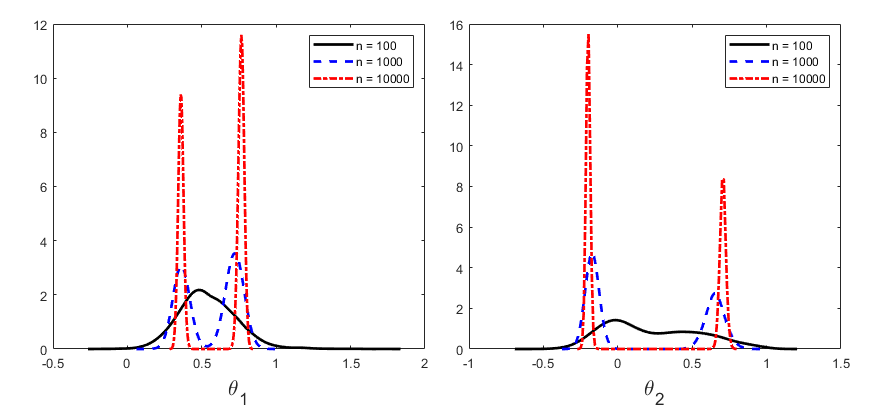}}
		\caption{Posteriors for the moving average model of order two example with identification failure. Recall $\theta_{1,\star}=(.8,-.2)^\intercal$ and  $\theta_{2,\star}=(.3683,.7378)^\intercal.$}
		\label{fig:ma2}
	\end{figure}

	\section{Adjustment Example: $g$-and-$k$ model}\label{ex:gandk}
	
	The $g$-and-$k$ family of univariate distributions \citep{haynes+mm97} 
	has four parameters $\theta=(A,B,g,k)$, where the parameters
	control location, scale, skewness and kurtosis respectively.  A $g$-and-$k$ distribution is defined
	through a closed-form quantile function
	$$Q(p;A,B,g,k)=A+B\{1+c \tanh [gz(p)/2]\}z(p)[1+z(p)^2]^k,\;\;\;p\in (0,1),$$
	where $z(p)$ is the quantile function of the standard normal distribution, 
	and the constant $c$ is conventionally fixed at $0.8$, which results in the constraint $k>-0.5$.  
	Bayesian inference for the $g$-and-$k$ model was considered in \cite{allingham+km09}, where it was 
	noted that the closed form quantile function allows easy simulation from the model using the inversion method,  
	making likelihood-free inference methods attractive.
	
	In this example a dataset modelled using the $g$-and-$k$ distribution by 
	\cite{prangle20} is considered.  The data are available 
	in the R package \texttt{Ecdat} \citep{croissant+g20} and following \cite{prangle20} we consider
	the daily log returns for exchange rates of the US versus Canadian dollar.  There are 1867 observations over
	the period 1980 to 1987.
	We fix $k=0$ in the $g$-and-$k$ model to induce clear misspecification, with the lack of kurtosis resulting in 
	the inability to capture the heavy-tailed behaviour of the real returns data.  
	
	Our initial focus is to explore a suggestion made by \cite{muller13} in the context of sandwich-type 
	variance adjustments for Bayesian inference under misspecification.  \cite{muller13} considers adjustments 
	in which, under correct model specification, a sandwich-type variance estimate and the 
	asymptotic posterior variance estimate should be approximately equal.  Section 4.4 of M\"{u}ller suggests using 
	some summary of the difference
	between estimates as a diagnostic for misspecification.  We do something similar.
	Consider the adjusted Bayesian synthetic likelihood method of Section 4.2.2, but fixing the summary statistic variance matrix to be the value for the estimated posterior mean for $\theta$ under standard Bayesian synthetic likelihood.  
	Under correct specification, and from the results of Corollary 3.1, using this fixed variance matrix
	estimate, the adjusted Bayesian synthetic likelihood should result
	in the same inferences asymptotically as the ordinary Bayesian synthetic likelihood.  So a large adjustment 
	could be considered evidence for misspecification.  
	
	Let $Q_1, Q_2, Q_3$ be the three quartiles of the data $y=(y_1,\dots, y_n)$.  We define summary statistics
	$S_1=Q_2$, $S_2=(Q_3-Q_1)$ and $S_3=(Q_3-2Q_2+Q_1)/S_2$.  These are three of four robust summary
	statistics considered in \cite{drovandi2011likelihood} for the $g$-and-$k$ model.  We also define a fourth summary statistic
	$S_4$ as the lower $1$ percentage quantile of the data, which captures the extreme negative returns.  
	Since we fix $k=0$, we have a three parameter family
	of distributions which has flexible behaviour in terms of 
	location, scale and skewness.  We use priors which are independent and uniform
	over ranges $[-1,1]$, $[0,1]$ and $[-5,5]$ for $A$, $B$ and $g$ respectively.  Using the summary statistics $S^{(1)}=(S_1,S_2,S_3)$, 
	there is no incompatibility, but with summary statistics $S^{(2)}=(S_1,S_2,S_3,S_4)$ there is.  The reason is that
	capturing the location, scale and skewness evident in the first three summary statistics while simultaneously
	matching the lower tail behaviour specified through $S_4$ is not possible when $k=0$. 
	
	Figure \ref{canadian-3ss} shows the Bayesian synthetic likelihood posterior estimates based on summary statistics $S^{(1)}$, together with the adjusted Bayesian synthetic likelihood
	posterior estimates.  The top row shows univariate posterior marginals, and the bottom row shows bivariate posterior
	marginals.  The values denoted KLDN above the plots of the univariate marginals are the Kullback-Leibler divergence between normal
	approximations to the unadjusted and adjusted posterior densities, where the normal approximations
	are based on posterior means and standard deviations for each method.  Precisely, 
	$$\text{KLDN}=\log \frac{\sigma_A}{\sigma_S}+\frac{\sigma_S^2+(\mu_S-\mu_A)^2}{2\sigma_A^2}-\frac{1}{2},$$
	where $\mu_A,\sigma_A$ and $\mu_S,\sigma_S$ are the estimated mean and standard deviation for the adjusted Bayesian synthetic likelihood and
	standard Bayesian synthetic likelihood respectively. The KLDN allows us to measure the overall change of
	the posterior marginals after adjustment.  
	
	The Bayesian synthetic likelihood and adjusted Bayesian synthetic likelihood estimates
	are based on 80,000 iterations of a random walk Metropolis algorithm with 10,000 burn-in and $m=60$ simulations per 
	likelihood estimate, with 1,000 samples retained after thinning.  Even though incompatibility is not an issue
	for the summary statistics $S^{(1)}$, there is a substantial adjustment to the posterior marginal distributions.
	\begin{figure}[htbp]
		\centerline{\includegraphics[width=120mm]{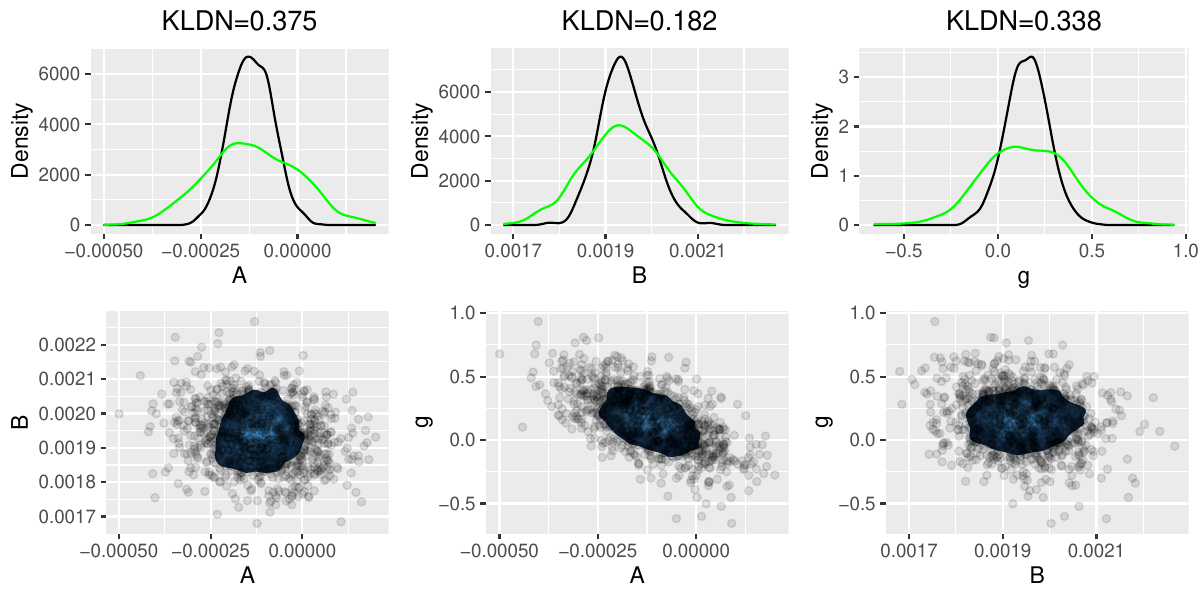}}
		\caption{\label{canadian-3ss} Estimated Bayesian synthetic likelihood posterior densities for the summary statistic vector $S^{(1)}$ using standard Bayesian synthetic likelihood and adjusted Bayesian synthetic likelihood for the US-Canadian exchange rate data and the $g$-and-$k$ model with $k=0$. The top row shows univariate marginals (black=standard Bayesian synthetic likelihood, green=adjusted Bayesian synthetic likelihood).    The KLDN values are described in the text and summarize how much the posterior changes after adjustment.  The bottom row shows bivariate marginals.  The contours show the standard Bayesian synthetic likelihood and the points are adjusted Bayesian synthetic likelihood sample values based on $1,000$ samples.}
	\end{figure}  
	This seems to be due to the misspecification of the synthetic likelihood variance.

	{To confirm this finding, we estimate the variance of the summaries for the observed data using the bootstrap and compare this estimate against the bootstrap estimate of the variance that results from $1,000$ posterior predictive replicates of the data based on the standard Bayesian synthetic likelihood posterior. The bootstrap estimates of variance for the summary statistics for the observed data are much smaller than the bootstrap variance estimates for the posterior predictive replicates, particularly for $S_3$. This indicates that the variance used in the Bayesian synthetic likelihood posterior can not reasonably accommodate the actual variance of the summaries. }

	{Figure \ref{canadian-4ss} shows the Bayesian synthetic likelihood posterior estimates based on the summary statistics $S^{(2)}$, which
		is the case of incompatibility where the summary statistic $S_4$ capturing the tail behaviour is added to $S^{(1)}$.  
		There is a large change in the estimated posterior for $g$, and larger adjustments
		are being made for both $A$ and $g$ compared to the previous case, which can be seen both graphically and from the KLDN 
		values.  The large change again suggests possible misspecification.  
	\begin{figure}[htbp]
		\centerline{\includegraphics[width=120mm]{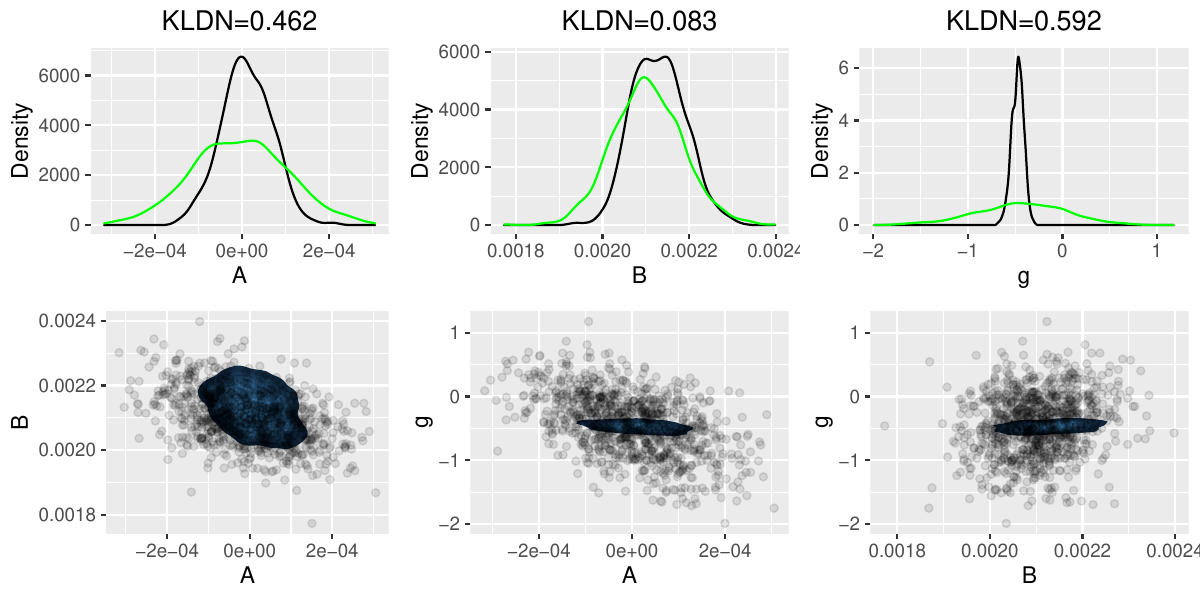}}
		\caption{\label{canadian-4ss} Estimated Bayesian synthetic likelihood and adjusted Bayesian synthetic likelihood posterior densities for the summary statistic vector $S^{(2)}$. Please see Figure \ref{canadian-3ss} for further details. }
	\end{figure}

	{The above analysis demonstrates that if the model is not correct, large adjustments can 
		arise from either summary statistic incompatibility (i.e., misspecification), or from misspecification of the 
		summary statistic variance under the assumed model. 
		However, as is true for standard Bayesian synthetic likelihood, even if the model is correctly specified, the adjustment to the synthetic likelihood
		could also be large if the summary statistics are non-Gaussian. In addition, since the adjustment is based on asymptotic arguments, large differences could also be observed if the adjustments are unreliable in finite samples.  We conclude that while the existence of a large adjustment
		is suggestive of misspecification, other diagnostics may be needed to diagnose specific features of the model that may be misspecified.}
	
	{To this end, we demonstrate that  robust Bayesian synthetic likelihood can be used as a diagnostic to pinpoint which features of the observed data cannot be matched by the assumed model.} For the prior on $\Gamma$, we use an exponential distribution with a mean of 0.5 on each component and assume the components are independent, as suggested by \cite{frazier2019robust}.  We apply standard Bayesian synthetic likelihood with $S^{(1)}$ and $S^{(2)}$, and  robust Bayesian synthetic likelihood with $S^{(2)}$.  As above, we use the Bayesian synthetic likelihood R package of \cite{an2019bsl} for running the Bayesian synthetic likelihood methods.  We use 100,000 iterations of Markov chain Monte Carlo for each run of Bayesian synthetic likelihood, and use a starting value with good support under each approximate posterior to avoid the need for a burn-in.
	
	Figure \ref{figsub:summary-predictive-standard-3ss} shows the posterior predictive distribution for each component of $S^{(1)}$ when fitted with standard Bayesian synthetic likelihood. As suggested earlier, the model is compatible with these three statistics.  The corresponding plot for standard Bayesian synthetic likelihood with $S^{(2)}$ is shown in Figure \ref{figsub:summary-predictive-standard-4ss}. It is evident that the model is not able to recover the four statistics.  By trying to match the four statistics simultaneously, the model is unable to recover any of the statistics with high accuracy, particularly $S_3$.

	\begin{figure}[htbp]
		\centering
		\includegraphics[scale=0.7]{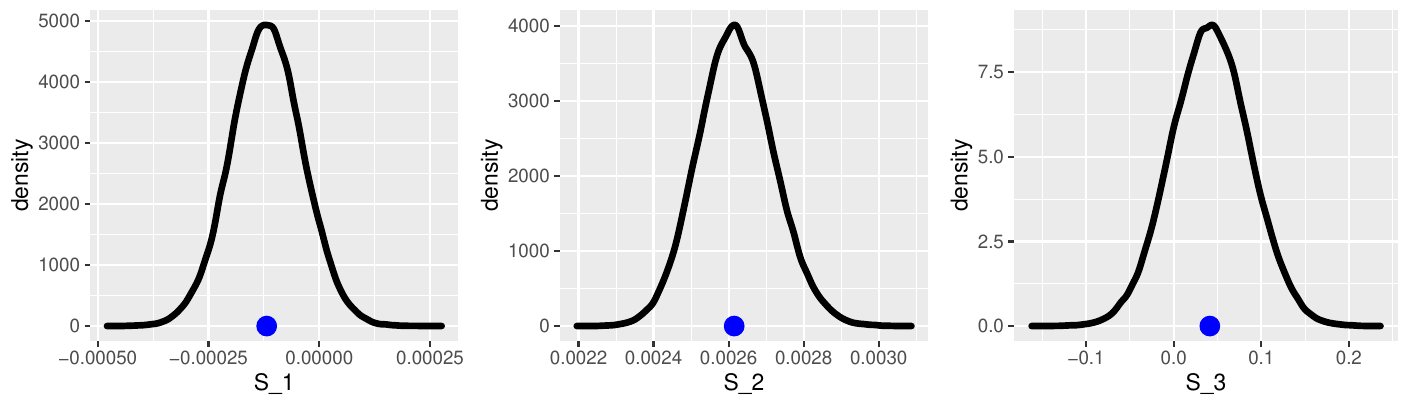}
		\caption{Posterior predictive distribution of the summary statistics when applying standard Bayesian synthetic likelihood with $S^{(1)}$ to the US-Canadian exchange rate data.  The dots show the observed values of the summary statistics. }
		\label{figsub:summary-predictive-standard-3ss}
	\end{figure}

	\begin{figure}[htbp]
		\centering
		\includegraphics[scale=0.7]{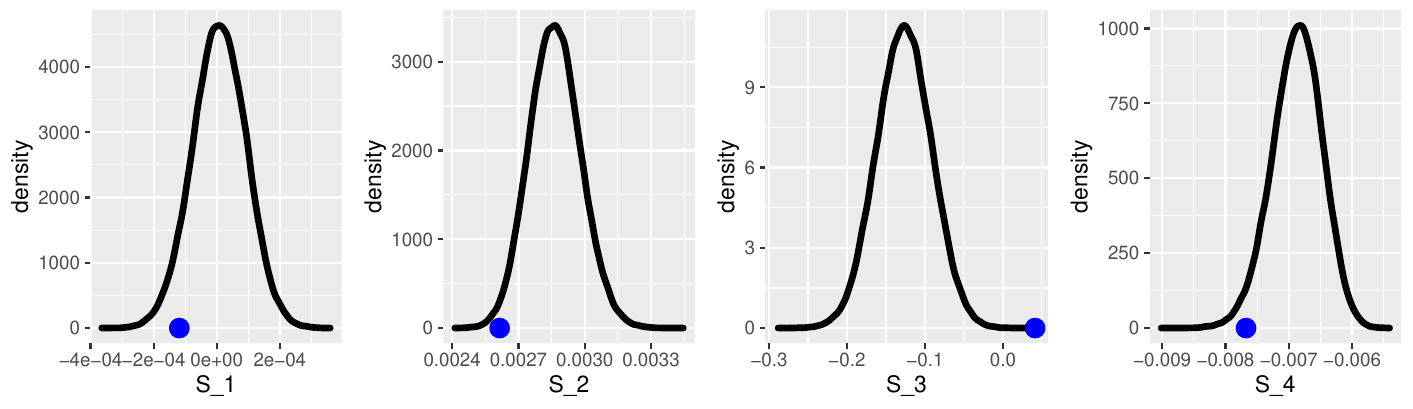}
		\caption{Posterior predictive distribution of the summary statistics when applying standard Bayesian synthetic likelihood with $S^{(2)}$ to the US-Canadian exchange rate data.  The dots show the observed values of the summary statistics. }
		
		\label{figsub:summary-predictive-standard-4ss}
	\end{figure}
	
	\begin{figure}[htbp]
		\centering
		\includegraphics[scale=0.7]{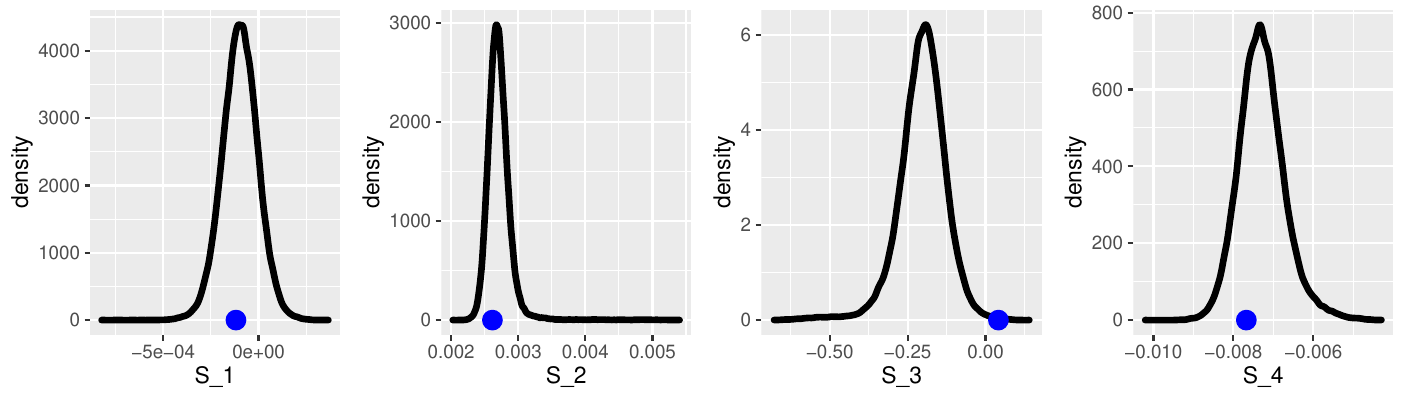}
		\caption{Posterior predictive distribution of the summary statistics when applying {robust Bayesian synthetic likelihood with $S^{(2)}$}	to the US-Canadian exchange rate data.  The dots show the observed values of the summary statistics. }
		
		\label{figsub:summary-predictive-robust-4ss}
	\end{figure}

	The posterior distribution of $\Gamma$ when using  robust Bayesian synthetic likelihood is shown in Figure \ref{fig:posterior-gamma-robust}. The  robust Bayesian synthetic likelihood method suggests that the misspecification/incompatibility is due to the models inability to recover $S_3$.  This is evident by the large departure in the posterior distribution of $\gamma_3$ compared to its prior.  The posterior predictive distribution of the summary statistics obtained with  robust Bayesian synthetic likelihood is shown in Figure \ref{figsub:summary-predictive-robust-4ss}.  By allowing for incompatibility,  robust Bayesian synthetic likelihood produces a posterior distribution that is able to recover $S_1$, $S_2$ and $S_4$ accurately, whilst placing little emphasis on $S_3$.

	\begin{figure}[htbp]
		\centerline{\includegraphics[scale=0.7]{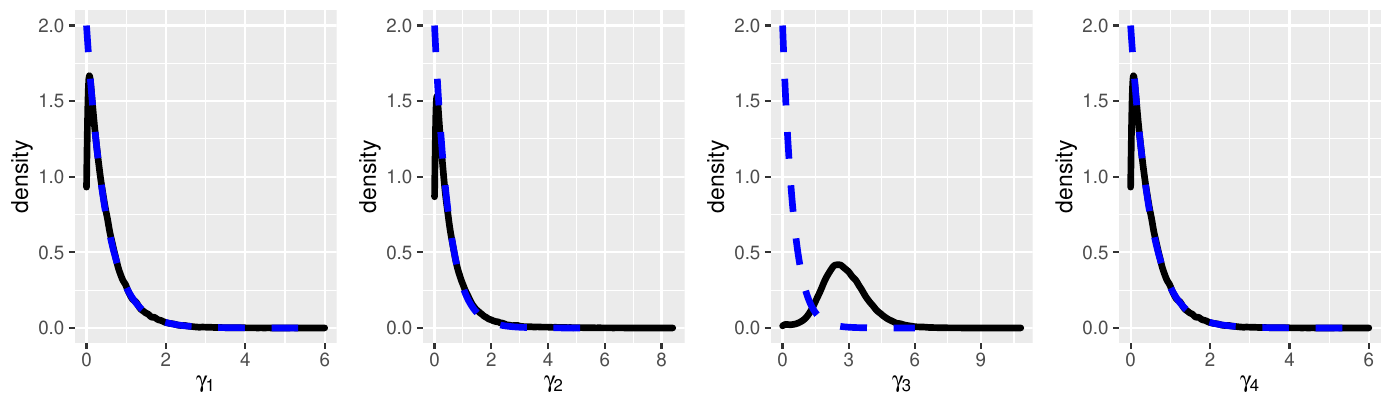}}
		\caption{\label{fig:posterior-gamma-robust} Estimated posterior distributions for the components of $\Gamma$ (solid) when applying  robust Bayesian synthetic likelihood to the US-Canadian exchange rate data based on $S^{(2)}$. Dashed lines show the priors of the components of $\Gamma$.}
	\end{figure}

	Furthermore, there is a computational benefit of  robust Bayesian synthetic likelihood.  Using only $m=30$,  robust Bayesian synthetic likelihood produces an Markov chain Monte Carlo acceptance rate of 21\%.  In contrast, standard Bayesian synthetic likelihood applied to $S^{(2)}$ using $m=300$ gives an acceptance rate of only $14\%$ with a carefully tuned random walk covariance matrix.  This is due to the fact that the observed statistic always lies in tail of the model summary statistic distribution regardless of the value of $\theta$.  In contrast,  robust Bayesian synthetic likelihood introduces variance inflation to adjust the model to be compatible even when it is not.
	
	The univariate posterior distributions of $\theta$ produced from Bayesian synthetic likelihood and  robust Bayesian synthetic likelihood based on $S^{(2)}$ are shown in Figure \ref{fig:posterior-theta-4ss}.  There is a substantial difference between the posterior distributions. The  robust Bayesian synthetic likelihood method produces a fit to the data where the model is compatible with $S_1$, $S_2$ and $S_4$, whilst largely ignoring $S_3$.  The posterior variance of $g$ is substantially larger with  robust Bayesian synthetic likelihood, and is consistent with the Bayesian synthetic likelihood adjustment results.  However, unlike Bayesian synthetic likelihood with adjustment,  robust Bayesian synthetic likelihood can shift the location of the posteriors.

	\begin{figure}[htbp]
		\centerline{\includegraphics[scale=0.7]{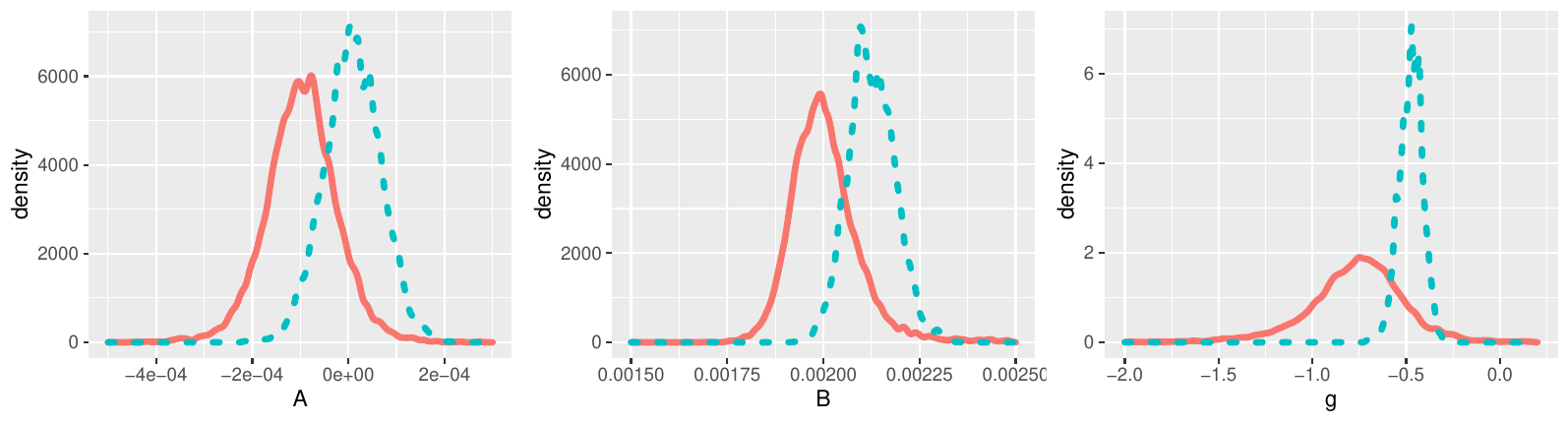}}
		\caption{\label{fig:posterior-theta-4ss} Estimated posterior distributions for the components of $\theta$ when applying Bayesian synthetic likelihood (dash) and  robust Bayesian synthetic likelihood (solid) to the US-Canadian exchange rate data based on $S^{(2)}$.}
	\end{figure}
	
	\newpage
	
	\section{Tabular Reference of Various Synthetic Likelihood Methods}
	This section gives an easy to read tabular reference that briefly compares and contrasts this current paper with several recent BSL works: \cite{price2018bayesian}, \cite{frazier2019robust},  and \cite{frazier2019bayesian}. We briefly state the goal of that paper, its empirical and theoretical contributions, and the extent to which that paper deals with model misspecification - if at all. 
	
	\newpage

	\includepdf{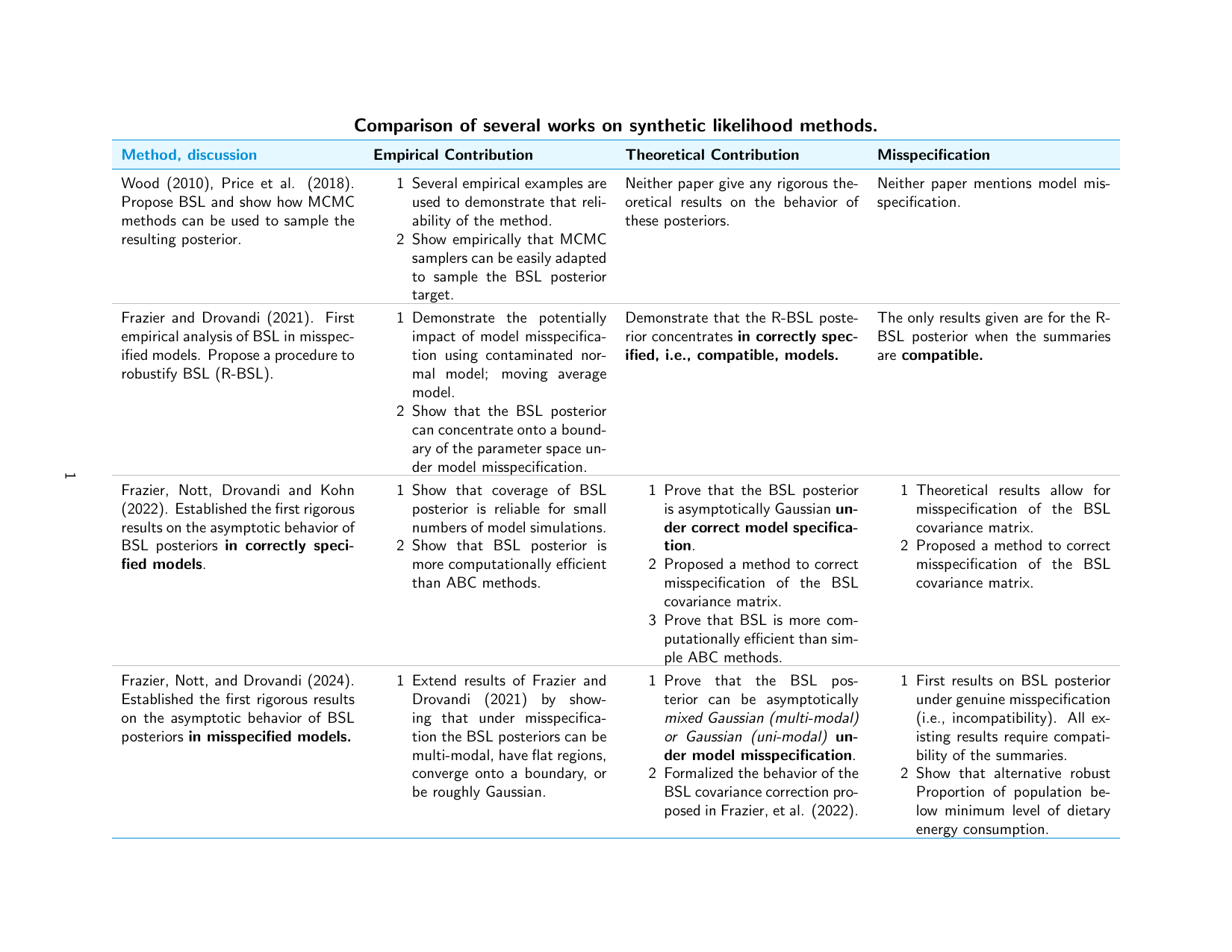}

\end{document}